\theoremstyle{plain}
\newtheorem{thm}{Theorem}[section]
\newtheorem{cor}[thm]{Corollary}
\newtheorem{pro}[thm]{Proposition}
\newtheorem{lem}[thm]{Lemma}
\newtheorem{proposition-principale}[thm]{Proposition principale}
\newtheorem{thm-principal}{Main Theorem}
\newtheorem*{namedtheorem}{\theoremname}
\newcommand{\theoremname}{testing}
\newenvironment{named}[1]{\renewcommand{\theoremname}{#1}\begin{namedtheorem}}{\end{namedtheorem}}
\newtheorem*{namedtheoremr}{\theoremnamer}
\newcommand{\theoremnamer}{testing}
\theoremstyle{definition}
\newtheorem{rem}[thm]{Remark}
\newenvironment{defi-G}
{\noindent{\bf Definition.}\it}{\\}
\newenvironment{thm-M}
{\noindent{\bf Main Theorem.}\it }{}
\newenvironment{thm-A}
{\noindent{\bf Theorem A.}\it}{\\ }
\newenvironment{thm-B}
{\noindent{\bf Theorem B.}\it}{\\ }
\newenvironment{thm-BB}
{\noindent{\bf Theorem B'.}\it}
\newenvironment{thm-C}
{\noindent{\bf Theorem C.}\it}{\\ }
\def\C{\mathbf{C}}
\def\R{\mathbf{R}}
\def\Q{\mathbf{Q}}
\def\Z{\mathbf{Z}}
\def\N{\mathbf{N}}
\def\bfk{{\mathbf{k}}}
\newcommand{\grp}[1]{{\langle#1\rangle}}
\def\jj{{\sf{j}}}
\def\DA{\mathcal{A}}
\def\calr{\mathcal{R}}
\def\Coeff{\mathcal{C}}
\def\P{\mathbb{P}}
\def\disk{\mathbb{D}}
\def\U{{\mathcal{U}}}
\def\F{{\mathbf{F}}}
\def\A{\mathcal{A}}
\def\Diff{{\sf{Diff}}}
\def\Jet{{\sf{Jets}}}
\def\Hom{{\mathrm{Hom}}}
\def\Cont{{\sf{Cont}}}
\def\BX{{\mathbb{X}}}
\newcommand{\ad}{\mathrm{ad}}
\newcommand{\FD}{\widehat{\Diff}(\bfk,0)}
\newcommand{\norm}[1]{\lVert#1\rVert}
\def\Hyp{{\mathbb{H}}}
\def\PSL{{\sf{PSL}}\,}
\def\GL{{\sf{GL}}\,}
\newcommand{\Id}{{\rm Id}}
\newcommand{\ra}{{\to}}
\def\dist{{\sf{dist}}}
\numberwithin{equation}{section}       
\newcommand{\eps}{\varepsilon}
\renewcommand{\phi}{\varphi}
\newcommand{\ol}[1]{\overline{#1}}
\newcommand{\m}{^{-1}}
\newcommand{\rad}{\mathrm{rad}}
\begin{document}

\setlength{\baselineskip}{0.56cm}        
%
%
\title[Surfaces groups in germs of diffeomorphisms]
{Surface groups in the group of germs of analytic diffeomorphisms in one variable.}
\date{}
\author{Serge Cantat, Dominique Cerveau, Vincent Guirardel, and Juan Souto}
\address{Univ Rennes, CNRS, IRMAR - UMR 6625, F-35000 Rennes, France}
\email{serge.cantat@univ-rennes1.fr}
\email{dominique.cerveau@univ-rennes1.fr}
\email{vincent.guirardel@univ-rennes1.fr}
\email{juan.souto@univ-rennes1.fr}
%
%

%
%

%
%

\begin{abstract} 
We construct embeddings of surface groups into the group of germs of analytic diffeomorphisms in one variable. 
\end{abstract}

\maketitle

\setcounter{tocdepth}{1}




\section{Introduction}

\subsection{The main result} Let $\C$ be the field of complex numbers and $\Diff(\C,0)$ the group of germs of analytic diffeomorphisms at the origin $0\in\C$. 
 Choosing a local coordinate $z$ near the origin, 
every element $f\in \Diff(\C,0)$ is determined by a unique power series 
\[
f(z)=a_1z+a_2z^2+a_3z^3+ \ldots + a_nz^n+\ldots
\]
with $f'(0)=a_1\neq 0$ and with a positive radius of convergence  
\begin{equation}\label{eq:radius}
\rad(f)=\left(\limsup_{n\to +\infty}\vert a_n\vert^{1/n}\right)^{-1}.
\end{equation}
We denote by $\Diff(\R,0)\subset \Diff(\C,0)$ the subgroup of real germs  
in this chart, i.e.\ with $a_i\in\R$ for all $i\in\N$ (this inclusion depends on the choice of the coordinate $z$).
The main goal of this note is the following result, that answers a 
question raised by  
 E.~Ghys (see \cite{Cerveau:Survey2013}, \S 3.3, or also \cite{Brudnyi:Survey2010}, Problem~4.15).

\begin{named}{Theorem A}
Let $\Gamma$ be the fundamental group of a closed orientable surface, or of a closed non-orientable surface of genus $\geq 4$. 
Then $\Gamma$ embeds in the group $\Diff(\R,0)$ and in particular in $\Diff(\C,0)$. 
\end{named}

We shall present three proofs of Theorem~A. 
For simplicity, in this introduction, we restrict to the case where $\Gamma$ is is the fundamental group of an orientable surface of genus 2, and
we consider the presentation
\begin{equation}\label{eq:presentation-G2-intro}
\Gamma_2=\grp{a_1,b_1,a_2,b_2\,|\ [a_1,b_1]=[a_2,b_2]}.
\end{equation}
Our proofs of theorem A are inspired by \cite{BGSS}, where it is proved that a compact topological group or a connected Lie group which contains a 
dense free group of rank $2$ contains a dense subgroup isomorphic to $\Gamma_2$.

Recently, and independently, A. Brudnyi proved a similar result for embeddings into the group of formal germs of diffeomorphisms (see~\cite{Brudnyi:preprint})

\subsection{Compact groups}\label{par:compact-groups}  
Let us describe the argument used in \cite{BGSS} to prove the following result. 

\begin{thm}[\cite{BGSS}]\label{thm:BGSS}
If a compact group $G$ contains a free group $F$ of rank $2$, then
there is an embedding $\rho\colon \Gamma_2\to G$ such that $F\subset \rho(\Gamma_2)$.
\end{thm}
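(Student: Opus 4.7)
Fix free generators $\alpha,\beta$ of $F\subset G$, set $c=[\alpha,\beta]$, and introduce the commutator fiber
$$V_c \;=\; \{(x,y)\in G\times G \,:\, [x,y]=c\},$$
a nonempty (it contains $(\alpha,\beta)$) closed subset of the compact group $G\times G$. Any pair $(a_2,b_2)\in V_c$ determines a homomorphism $\rho\colon\Gamma_2\to G$ by $a_1\mapsto\alpha$, $b_1\mapsto\beta$, $a_2\mapsto a_2$, $b_2\mapsto b_2$, and the inclusion $F\subset\rho(\Gamma_2)$ is then automatic. The task therefore reduces to producing $(a_2,b_2)\in V_c$ for which $\rho$ is injective.

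The strategy is based on Baire category. For each nontrivial $w\in\Gamma_2$, the word map $\tilde w\colon V_c\to G$, $(x,y)\mapsto w(\alpha,\beta,x,y)$, is continuous, so the ``bad'' set $B_w := \tilde w^{-1}(1_G)$ is closed in $V_c$. Since $V_c$ is compact Hausdorff it is a Baire space, and since $\Gamma_2$ is countable, it suffices to show that each $B_w$ has empty interior. To make this tractable I would first reduce to the case that $G$ is a compact Lie group, by writing $G=\varprojlim G_i$ and arguing at each finite level. When $G$ is Lie, $V_c$ is a real analytic subvariety and $\tilde w$ is real analytic, so the condition ``$B_w$ has empty interior'' is equivalent to ``$\tilde w$ is not identically $1$'' on any connected component of $V_c$ we care about.

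The heart of the proof, and its main obstacle, is the non-triviality assertion: for every $w\in\Gamma_2\setminus\{1\}$ there exists $(a_2,b_2)\in V_c$ with $w(\alpha,\beta,a_2,b_2)\neq 1$. I would attack this via the amalgamated product structure $\Gamma_2 = F(a_1,b_1) *_{\langle t\rangle} F(a_2,b_2)$, where $t\leftrightarrow[a_1,b_1]=[a_2,b_2]$. If $w$ lies in the factor $F(a_1,b_1)$, then $\tilde w$ is the constant $w(\alpha,\beta)\neq 1$ because $\langle\alpha,\beta\rangle\cong F$ is free. Otherwise, in amalgamated normal form $w$ contains a syllable in $F(a_2,b_2)\setminus\langle t\rangle$, and one must find enough variation inside $V_c$ to force a non-trivial output. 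A natural source of such variation is the orbit
$$\{(g\alpha g^{-1},\,g\beta g^{-1}) : g\in Z_G(c)\} \;\subset\; V_c$$
under conjugation by the centralizer of $c$, possibly combined with further perturbations dictated by the algebraic structure of the commutator variety. Showing that $\tilde w$ cannot vanish throughout $V_c$ is where the real work of the theorem lies; I expect it to require an induction on the syllable length of $w$ in the amalgam, combined with the freeness of $\langle\alpha,\beta\rangle$ and the abundance of commutator solutions in a compact Lie group.

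Once the non-triviality is in place, Baire category yields $V_c\setminus\bigcup_{w\neq 1}B_w\neq\emptyset$, and any point of this intersection furnishes the desired injective $\rho\colon\Gamma_2\to G$ with $F\subset\rho(\Gamma_2)$.
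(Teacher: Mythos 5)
There is a genuine gap, and you have located it yourself: the assertion that for every $w\in\Gamma_2\setminus\{1\}$ the word map $\tilde w$ is non-constant-equal-to-$1$ on $V_c$, together with the stronger statement that $B_w$ has empty interior, is exactly ``where the real work of the theorem lies,'' and your proposal leaves both unproven. Working with the full commutator fiber $V_c$ makes these steps genuinely hard: $V_c$ need not be irreducible or connected, so even in the Lie case ``$\tilde w$ not identically $1$ on $V_c$'' does not imply ``$B_w$ has empty interior,'' and your suggested reduction to compact Lie groups via $G=\varprojlim G_i$ does not obviously transport injectivity of $\rho$ back to $G$. The proposed induction on syllable length in the amalgam $\Gamma_2=F(a_1,b_1)*_{\langle t\rangle}F(a_2,b_2)$ is essentially an attempt to reprove Baumslag's lemma directly inside an arbitrary compact group, which is precisely the step you cannot carry out without more structure.

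The paper's proof avoids all of this by two moves. First, it invokes the fact that $\Gamma_2$ is fully residually free (Baumslag), realized concretely by the morphisms $p\circ\tau^N\colon\Gamma_2\to\F_2$ where $p$ retracts onto $\langle a_1,b_1\rangle$ and $\tau$ is the Dehn twist along $c=[a_1,b_1]$: for each $g\neq 1$ one has $p\circ\tau^N(g)\neq 1$ for all large $N$ (Proposition~\ref{pro_p1}). Second, instead of all of $V_c$ it uses only the compact subset parametrized by $T=\overline{\langle h\rangle}$ with $h=\iota(p(c))$, acting by conjugation: $\rho_t(a_2)=t\,\iota p(a_1)\,t^{-1}$, $\rho_t(b_2)=t\,\iota p(b_1)\,t^{-1}$. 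On this monothetic compact group $T$, the set $\{t:\rho_t(g)\neq 1\}$ is open and automatically dense, because it contains $\{h^N: N\geq n_0\}$, which is dense in $T$ for every $n_0$. This replaces your unproved ``empty interior'' claim by a one-line density argument, and the Baire category conclusion then goes through. Your remark that the centralizer orbit $\{(g\alpha g^{-1},g\beta g^{-1}):g\in Z_G(c)\}$ is ``a natural source of variation'' is pointing in the right direction (the paper's $T$ sits inside $Z_G(c)$), but without Baumslag's lemma to certify that the specific points $t=h^N$ already separate every nontrivial $g$, the argument does not close.
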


\begin{proof} Denote by $\F_m$ the free group on $m$ generators. 
The first ingredient is a result by Baumslag \cite{Baumslag} saying that $\Gamma_2$ is {\bf{fully residually free}}; this means that there exists a sequence of morphisms $p_N:\Gamma_2\ra \F_2$ which is asymptotically injective: for every $g\in\Gamma_2\setminus \{1\}$, 
$p_N(g)\neq 1$ if $N$ is large enough. 

To be more explicit, we use the presentation~\eqref{eq:presentation-G2-intro} of $\Gamma_2$, and we note that the subgroup $\grp{a_1,b_1}$ of $\Gamma_2$
is a free group  $\F_2=\grp{a_1,b_1}$. Let $p:\Gamma_2\ra \grp{a_1,b_1}$ be the morphism fixing $a_1$ and $b_1$ and sending $a_2$ and $b_2$ to
 $a_1$ and $b_1$ respectively.
 Let $\tau:\Gamma_2\ra\Gamma_2$ be the Dehn twist around the curve $c=[a_1,b_1]$, i.e. the automorphism that fixes $a_1$ and $b_1$
 and send $a_2$ and $b_2$ to $c a_2 c\m$ and $c b_2 c\m$ respectively. 

\begin{pro}[see {\cite[Corollary~2.2]{BGSS}}] \label{pro_p1}
Given any $g\in \Gamma_2\setminus\{1\}$, there exists a positive integer $n_0$ such that 
$p\circ\tau^N(g)\neq 1$ for all $N\geq n_0$.
\end{pro}

Now, fix an embedding $\iota\colon \grp{a_1,b_1}\to G$ such that $\iota(\grp{a_1,b_1})=F$.
Composing $p\circ \tau^N$ with $\iota$, we get a sequence of points $p_N:=\iota\circ p\circ \tau^N$ in  $\Hom(\Gamma_2,G)$. 
Now, consider the element $h=\iota(p(c))$ of $G$, and let $T$ be the closure of the cyclic group $\grp{h}$ in the compact group $G$. 
For $t\in T$, define a morphism $\rho_t\colon \Gamma_2\to G$ by 
\begin{align}
\rho_t(a_1) &= \iota\circ p(a_1), & \rho_t(a_2) &= t\circ \iota\circ p(a_1) \circ t^{-1},\\
\rho_t(b_1) &= \iota\circ p(b_1), & \rho_t(b_2)&=t\circ \iota\circ p(b_1)\circ t^{-1};
\end{align}
 these representations are well defined and satisfy $\rho_t=\iota\circ p\circ\tau^N$ when $t=h^N$. Moreover, on the subgroup $\grp{a_1,b_1}$, 
  $\rho_{t}$ coincides with $\iota\circ p$, so  $F\subset \rho_t(\Gamma_2)$.
Thus, $(\rho_t)_{t\in T}$ is a compact subset $\calr(T)\subset \Hom(\Gamma_2,G)$
that contains the sequence of points $p_N$. For every $g$ in $\Gamma_2\setminus\{ 1\}$, the subset $\calr(T)_g=\{\rho_t\; \vert \; \rho_t(g)\neq 1\}$
is open, and Proposition~\ref{pro_p1}  shows that it is dense because $\{ h^n\; \vert \; n\geq n_0\}$ is dense in $T$ for every integer $n_0$. 
By the Baire theorem, the subset of injective representations $\rho_t$ is a dense $G_\delta$ in $\calr(T)$, and this proves Theorem~\ref{thm:BGSS}.
\end{proof}

The group $\Diff(\R,0)$ contains non-abelian free groups (this is well known, see Section~\ref{par:free-groups1}), 
and one may want to copy the above argument 
for $G=\Diff(\R,0)$ instead of a compact group. The Koenigs linearization theorem says that if $f\in\Diff(\R,0)$
satisfies  $f'(0)> 1$,
then $f$ is conjugate to the homothety $z\mapsto f'(0)z$; in particular, there is a flow of diffeomorphisms $(\phi^t)_{t\in \R}$ for which $\phi^1=f$.
In our argument, the compact group $T$ introduced to prove Theorem~\ref{thm:BGSS} will be replaced by such a flow, hence by a group isomorphic to $(\R, +)$. Also, in that proof,  $h=\iota(p(c))$ was a commutator, and the derivative of any commutator in $\Diff(\R,0)$ is equal to $1$ at the origin, so that Koenigs theorem can not be applied to a commutator. Thus,  we  need to change $p_N$ into  a different sequence of morphisms: the Dehn twist 
$\tau$ will be replaced by another automorphism of $\Gamma_2$, twisting along three non-separating curves.

This argument will be described in details in Sections~\ref{sec-formal} and~\ref{sec_orientable}; the reader who wants the simplest proof of Theorem~A in the case of orientable surfaces only needs to read these sections. Non orientable surfaces are dealt with in Section~\ref{sec_nonorientable}.

\subsection{Lie groups}
Now, let us look at representations in a linear algebraic subgroup $G$ of $\GL_m(\R)$. 
Assuming that there is a faithful representation $\iota\colon \F_2\to G$ with dense image, we shall construct a faithful representation $\Gamma_2\to G$. 

The representation variety $\Hom(\Gamma_2,G)$ is an algebraic subset of $G^4$. Let $\calr$ be the irreducible component containing the
trivial representation.  Let $p_N\colon \Gamma_2\to \F_2$ be an asymptotically injective sequence of morphisms, as given by Baumslag's proposition. When  the image of $\rho$ is dense,
one can prove that $\iota\circ p_N$ is in  $\calr$ for arbitrarily large values of $N$. For $g\in \Gamma_2\setminus\{ 1\}$, the subset $\calr_g\subset \calr$  of homomorphisms killing $g$ is  algebraic, and it is a proper subset because it does not contain $\iota\circ p_N$ for some large $N$.
Then, a Baire category argument in $\calr$ implies that a generic choice of $\rho\in \calr$ is faithful.

To apply this argument to $G=\Diff(\R,0)$, one needs a
good topology on $\Diff(\R,0)$, and a good ``irreducible variety'' $\calr\subset \Hom(\Gamma_2,G)$
containing $\iota\circ p_N$, in which a Baire category argument can be used.  
This approach may seem difficult because $\Hom(\Gamma_2,G)$ is a priori far from being an irreducible analytic variety but, again,  the  Koenigs linearization theorem
will provide the key ingredient. 

First, we shall adapt an idea introduced by Leslie in~\cite{Leslie} to define a useful group topology on $\Diff(\R,0)$ (see Section~\ref{sec-appendix}). 
With this topology, $\Diff(\R,0)$ is  an increasing union of Baire spaces, which will be enough
for our purpose.
Denote by $\Cont(\R,0)\subset \Diff(\R,0)$ the set of  elements $f\in\Diff(\R,0)$ with $|f'(0)|<1$; $\Cont$ stands for ``{\sl{contractions}}''.  
Consider the set $\calr$  of representations $\rho\colon \Gamma_2\to \Diff(\R,0)$ with $\rho(a_1)$ tangent to the identity, and $\rho(b_1)\in \Cont(\R,0)$.
Then, the key fact is that the map
\begin{align*}
\Psi:\calr&\ra \Cont(\R,0) \times \Diff(\R,0)\times\Diff(\R,0)\\
\rho&\mapsto (\rho(b_1),\rho(a_2),\rho(b_2))
\end{align*}
is a continuous bijection.
Indeed, the defining relation of $\Gamma$ is equivalent to $a_1b_1a_1\m=[a_2,b_2]b_1$.
Given $(g_1,f_2,g_2)\in \Cont(\R,0)\times \Diff(\R,0)\times\Diff(\R,0)$, 
the germs $g_1$ and $[f_2,g_2]g_1$ have the same derivative at the origin and, from the
Koenigs linearization theorem, there is 
a unique $f_1\in \Diff(\R,0)$ tangent to the identity solving the equation
$f_1g_1f_1\m=[f_2,g_2]g_1$: by construction there is a unique morphism $\rho\colon \Gamma_2\to \Diff(\R,0)$ 
that maps the $a_i$ to the $f_i$, and the $b_i$ to the $g_i$,  and this representation satisfies $\Psi(\rho)=(g_1,f_2,g_2)$.
With this bijection $\Psi$ and the topology of Leslie, we can identify $\calr$ with a union of Baire spaces, in which the Baire category argument applies.

\subsection{Other fields} 
 Let $\bfk$ be a finite field with $p$ elements. The group $\Diff^1(\bfk,0)$, also known as the Nottingham group,  is the group of power series tangent to the identity and with coefficients in the finite field $\bfk$. It is a compact group containing a free group (see \cite{Szegedy}). Thus, by~\cite{BGSS}, it contains a surface group.
 
\medskip

Now, let $p$ be a prime number, and let $\Q_p$ be the field of  $p$-adic numbers. 

Consider the subgroup $\Diff^1(\Z_p,0)\subset\Diff(\Q_p,0)$ of formal power series tangent to the identity and with coefficients in $\Z_p$.
First,  note that all elements $f$ of $\Diff^1(\Z_p,0)$ satisfy $\rad(f)\geq 1$, so that $\Diff^1(\Z_p,0)$ acts faithfully as a group of ($p$-adic analytic) homeomorphisms on $\{z\in \Z_p\; ;\; \vert z\vert <1\}$. So, in that respect, $\Diff^1(\Z_p,0)$ is much better than the group of germs of diffeomorphisms
$\Diff(\C,0)$. Moreover, with the topology given by the product topology on the coefficients $a_n\in \Z_p$ of the power series, the group $\Diff^1(\Z_p,0)$ becomes a compact group. And this compact group contains a free group. 
By the result of \cite{BGSS} described in Section~\ref{par:compact-groups}, it contains a copy of the surface group $\Gamma_2$. So, we get 
a surface group acting faithfully as a group of $p$-adic analytic homeomorphisms on $\{z\in \Z_p\; ;\; \vert z\vert <1\}$.
In Section~\ref{par:p-adic-proof} we give a third proof of Theorem~A that starts with the case of $p$-adic coefficients. 



\subsection{Organisation} The article is split in four parts. 
\begin{enumerate}
\item[{\bf{I.--}}] Sections~\ref{sec-formal} to~\ref{sec_nonorientable} give a first proof of Theorem~A; 
Section~\ref{sec_nonorientable}
is the only place where we deal with non-orientable surfaces. We refer to Theorem~B in 
Section~\ref{par:embeddings-other-fields-dense}  
for a stronger result, in which the field $\R$ 
is replaced by any non-discrete, complete valued field $\bfk$.

\item[{\bf{II.--}}] Section~\ref{sec-appendix} and~\ref{sec_Dominique} present our second proof, based on the construction of a group topology on $\Diff(\C,0)$. 

\item[{\bf{III.--}}] Then, our $p$-adic proof
is presented in Section~\ref{par:p-adic-proof}. 

\item[{\bf{IV.--}}] Section~\ref{sec_questions} draws some consequences and list a few open problems, while the appendix shows how to construct free groups in $\Diff(\C,0)$,
or $\Diff(\bfk,0)$ for any non-discrete and complete valued field. 
\end{enumerate}

\subsection*{Acknowledgement} Thanks to Yulij Ilyashenko, Frank Loray and Ludovic Marquis for several discussions on this subject,
and to the participants of the seminars of Dijon, Moscou, Paris, and Toulouse during which the results of this paper where presented. 

\tableofcontents


\medskip

\begin{center}
{\bf{ -- Part I. -- }}
\end{center}

\section{Germs of diffeomorphisms and the Koenigs Linearization Theorem}\label{sec-formal}

\subsection{Formal diffeomorphisms}\label{par:formal-diff-inversion}
Let $\bfk$ be a field (of arbitrary characteristic).
Denote by $\bfk\llbracket z\rrbracket$ the ring of formal power series in one variable with coefficients in $\bfk$. For every integer $n\geq 0$, let $A_n\colon\bfk\llbracket z\rrbracket\to\bfk$ denote the n-th coefficient function:
\begin{equation}\label{eq:coefficient-function}
A_n\colon f=\sum a_nz^n \; \mapsto \; A_n(f)=a_n.
\end{equation}
A {\bf formal diffeomorphism} is a formal power series $f\in \bfk\llbracket z\rrbracket$ such that $A_0(f)=0$ and $A_1(f)\neq 0$.
The composition $f\circ g$ determines a group law on the set
\begin{equation}
\widehat{\Diff}(\bfk,0)=\left\{f\in\bfk\llbracket z\rrbracket \; \vert \; A_0(f)=0\text{ and }A_1(f)\neq 0\right\}
\end{equation}
of all formal diffeomorphisms. 

For each $n\geq 1$, there is a polynomial $P_n\in \Z[A_1,B_1,\dots, A_n,B_n]$ 
such that if $f=\sum a_n z^n$ and $g=\sum b_n z^n$ 
then $f\circ g=\sum_{n\geq 1} P_n(a_1,b_1,\dots,a_n,b_n)z^n$.
Similarly, there  are polynomials $Q_n\in \Z[A_1,\dots, A_n][A_1\m]$ 
such that $f\m=\sum_{n\geq 1} Q_n(a_1,\dots,a_n)z^n$ if $f=\sum a_n z^n$; the polynomial function $Q_n$ 
is given by the following {\bf{inversion formula}}:
 \begin{equation*}\label{eq:inversion}
 \frac{1}{a_1^n}\sum_{k_1, k_2, \ldots} (-1)^{k_1+k_2+...} \cdot \frac{(n+1)\cdots(n-1+k_1+k_2+\ldots)}{k_1! \, k_2!\, \cdots} \cdot \left(\frac{a_2}{a_1}\right)^{k_1} \left(\frac{a_3}{a_1}\right)^{k_2}\cdots
 \end{equation*}
 where $a_i=A_i(f)$ and the sum is over all sequences of integers $k_i$ such that 
 \begin{equation*}\label{eq:coefficients-k}
 k_1+2k_2+3k_3+\cdots =n-1.
 \end{equation*}
We refer to \cite{Jennings} where this is proved for $f$ and $g$ tangent to the identity; the general case easily follows.

To encapsulate this kind of properties, we introduce the following definition. Let $m$ be a positive integer. 
By definition, a function $Q\colon  
\widehat{\Diff}(\bfk,0)^m\to \bfk$ is a {\bf{polynomial function}} with integer coefficients, if there is an integer $n$, and a polynomial $q\in \Z[A_{1,1},A_{2,1}, \ldots, A_{m-1,n}, A_{m,n} ][A_{1,1}^{-1}, \ldots, A_{m,1}^{-1}]$ such that 
\begin{align}
Q(f_1, \ldots, f_m) &= q(A_1(f_1), \ldots, A_n(f_m))
\end{align}
for all $m$-tuples $(f_1, \ldots, f_m)\in \widehat{\Diff}(\bfk,0)^m$; we denote by $\Z[\widehat\Diff(\bfk,0)^m]$ this ring of polynomial functions.

  Let $\F_m=\langle e_1,\dots,e_m\rangle$  be the free group of rank $m$. 
  To every word $w=e_{i_1}^{n_1}\ldots e_{i_k}^{n_k}$ in $\F_m$, we associate the {\bf{word map}}
  $ w:\widehat{\Diff}(\bfk,0)^m \to\widehat{\Diff}(\bfk,0)$,
  \begin{align}\label{eq-wordmap}
    (g_1,\dots,g_m)&\mapsto  w(g_1,\dots,g_m)\stackrel{\text{def}}{=}g_{i_1}^{n_1}\circ\ldots\circ g_{i_k}^{n_k}.
  \end{align}
Since composition and inversion are polynomial functions on $\widehat\Diff(\bfk,0)$, we get:
\begin{lem}\label{coefficientswordmap}
  Let $ w:\widehat{\Diff}(\bfk,0)^m \to\widehat{\Diff}(\bfk,0)$ be the word map given by some element of the free group $\F_m$. 
  For each $n\geq 1$, there is a polynomial function $Q_{w,n}\in \Z[\widehat\Diff(\bfk,0)^m]$ such that
$$ A_n(w(g_1, \ldots,g_m))=Q_{w,n}(g_1,\dots,g_m)$$
 for all $g_1,\dots,g_m\in\widehat\Diff(\bfk,0)$.
\end{lem}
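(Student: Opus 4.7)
The plan is to proceed by induction on the length $\ell(w)$ of the reduced word $w=e_{i_1}^{n_1}\cdots e_{i_k}^{n_k}$ (say, $\ell(w)=|n_1|+\cdots+|n_k|$), exploiting the two facts already recorded in the excerpt: composition of formal diffeomorphisms is polynomial (the $P_n$'s), and inversion is polynomial with only $A_1$ denominators (the $Q_n$'s and the inversion formula). Both facts say precisely that the ring $\Z[\widehat\Diff(\bfk,0)^m]$, as defined just before the lemma, is rich enough to absorb the operations we need; the lemma is then just a bookkeeping statement.

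For the base case $\ell(w)=1$, the word $w=e_i^{\pm 1}$ gives either $w(g_1,\dots,g_m)=g_i$, for which $A_n(w(g))=A_n(g_i)$ is a coordinate function and hence trivially lies in $\Z[\widehat\Diff(\bfk,0)^m]$, or $w(g_1,\dots,g_m)=g_i^{-1}$, for which the inversion formula recalled in Section~\ref{par:formal-diff-inversion} gives $A_n(g_i^{-1})=Q_n(A_1(g_i),\dots,A_n(g_i))\in\Z[A_1(g_i),\dots,A_n(g_i)][A_1(g_i)^{-1}]$, which is again of the required form. For the inductive step, write $w=u\cdot e_{j}^{\pm 1}$ with $\ell(u)<\ell(w)$. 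The composition rule yields
\[
A_n(w(g))=P_n\bigl(A_1(u(g)),A_1(e_j^{\pm 1}(g)),\dots,A_n(u(g)),A_n(e_j^{\pm 1}(g))\bigr),
\]
and by the inductive hypothesis applied to $u$ and to $e_j^{\pm 1}$ (base case), each $A_k(u(g))$ and each $A_k(e_j^{\pm 1}(g))$ is already a polynomial function in $\Z[\widehat\Diff(\bfk,0)^m]$. Since $\Z[\widehat\Diff(\bfk,0)^m]$ is closed under addition and multiplication, the composite $P_n(\cdots)$ lies in this ring, and we are done.

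The only genuine point to watch is the denominator bookkeeping, since by definition a polynomial function is allowed to invert only the leading coefficients $A_{1,1},\dots,A_{m,1}$ of the $m$ slots, not intermediate quantities like $A_1(u(g))$. This is harmless because the map $f\mapsto A_1(f)$ is a group homomorphism $\widehat\Diff(\bfk,0)\to\bfk^{*}$, so for any word $v=e_{j_1}^{m_1}\cdots e_{j_r}^{m_r}$ one has the closed formula $A_1(v(g_1,\dots,g_m))=A_1(g_{j_1})^{m_1}\cdots A_1(g_{j_r})^{m_r}$, a Laurent monomial in $A_1(g_1),\dots,A_1(g_m)$. Therefore, whenever the inductive step forces us to invoke the inversion formula at some subword and divide by $A_1(u(g))$, the result is a Laurent monomial in the allowed variables $A_1(g_1),\dots,A_1(g_m)$ and stays inside $\Z[\widehat\Diff(\bfk,0)^m]$. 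I expect this denominator tracking, rather than the algebraic substitution itself, to be the only place where one has to be careful; once it is handled, the induction closes immediately.
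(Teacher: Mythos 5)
Your proof is correct and is essentially the argument the paper intends: the paper deduces the lemma in one line from the polynomiality of composition and inversion, and your induction on word length simply makes that deduction explicit. Your closing remark on denominators is accurate (and in fact, with the decomposition $w=u\cdot e_j^{\pm1}$ one only ever inverts a single generator, so the only denominators that appear are the allowed $A_1(g_j)^{-1}$).
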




\subsection{Diffeomorphisms and Koenigs linearization Theorem}\label{sec_Koenigs_formal}

Suppose now that $\bfk$ is endowed with an absolute value $\vert\cdot \vert \colon \bfk\to \R_+$. 
Then $\bfk$ becomes a metric space with the distance induced
by $\vert \cdot \vert$. We shall almost always assume that 
\begin{itemize}
\item $\bfk$ is not discrete, equivalently there is an element $x\in \bfk$ with $\vert x \vert \neq 0, 1$; 
\item $\bfk$ is complete.
\end{itemize}
Let $\bfk\{z\}$ be the subring of $\bfk\llbracket z\rrbracket$ consisting of power series $f(z)=\sum a_nz^n$ whose radius of convergence $\rad(f)$ is positive (see Equation \eqref{eq:radius}). 
When $\bfk$ is complete, the series $\sum a_nz^n$ converges uniformly in the closed disk $\disk_{r}=\{z\in\bfk\; \vert \; \vert z\vert\le r\}$ for every  $r< \rad(f)$. 
The group of germs of analytic diffeomorphisms is the intersection $\Diff(\bfk,0):=\widehat\Diff(\bfk,0)\cap\bfk\{z\}$; it is a subgroup of $\widehat\Diff(\bfk,0)$.

A germ $f\in\Diff(\bfk,0)$ is {\bf{hyperbolic}} if $|A_1(f)|\neq 1$. The following result is proved in 
\cite[Chapter 8]{Milnor:book} and \cite[Theorem 1, p. 423]{Herman-Yoccoz:1983} (see also \cite[Theorem 1]{Siegel} or  \cite{Koenigs}).

\begin{thm}[Koenigs linearization theorem]\label{thm:Koenigs} 
Let $(\bfk, \vert \cdot\vert)$ be a complete, non-discrete valued field. 
Let $f\in\Diff(\bfk,0)$ be a hyperbolic germ of diffeomorphism. 
There is a unique germ of diffeomorphism $h\in \Diff(\bfk,0)$ 
such that 
$
h(f(z))=A_1(f)\cdot h(z)
$ 
and $A_1(h)=1$.
\end{thm}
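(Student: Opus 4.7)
The plan is to reduce to the case $|\lambda|<1$ where $\lambda=A_1(f)$, construct $h$ by a dynamical limit, and read off uniqueness from a formal calculation.

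First I reduce to $|\lambda|<1$: if $|\lambda|>1$, then $f\m$ has linear part $\lambda\m$ of absolute value less than $1$, and the equation $h\circ f=\lambda h$ is equivalent to $h\circ f\m=\lambda\m h$, so I may replace $f$ by $f\m$.

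Next I establish formal uniqueness, simultaneously producing a formal candidate for $h$. Writing $h(z)=z+\sum_{n\ge 2}h_nz^n$ and expanding $h\circ f=\lambda h$ degree by degree, for each $n\ge 2$ one obtains a relation of the form
\[
(\lambda-\lambda^n)\,h_n \;=\; P_n(a_2,\ldots,a_n;h_2,\ldots,h_{n-1}),
\]
where $a_i=A_i(f)$ and $P_n$ is a polynomial with integer coefficients in these variables and in $\lambda$. Since $|\lambda^{n-1}|=|\lambda|^{n-1}\ne 1$, the element $\lambda-\lambda^n=\lambda(1-\lambda^{n-1})$ is nonzero in $\bfk$, so $h_n$ is uniquely determined by induction on $n$. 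This proves uniqueness of $h$ as a formal power series.

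For the convergence of this formal candidate, I use the Koenigs dynamical construction. Set $h^{(n)}(z):=\lambda^{-n}\,f^{\circ n}(z)$. Pick a real number $c$ with $|\lambda|<c<|\lambda|^{1/2}$, which is possible because $|\lambda|<1$. Since $\bfk$ is non-discrete and complete, I can then choose $r>0$ such that $|f(z)|\le c|z|$ and $|f(z)-\lambda z|\le M|z|^2$ on $\disk_r$, for some $M>0$. By induction $|f^{\circ n}(z)|\le c^n|z|$ on $\disk_r$, and hence
\[
|h^{(n+1)}(z)-h^{(n)}(z)| \;=\; |\lambda|^{-(n+1)}\,|f(f^{\circ n}(z))-\lambda\,f^{\circ n}(z)| \;\le\; \frac{M}{|\lambda|}\left(\frac{c^2}{|\lambda|}\right)^{\!n}|z|^2.
\]
Because $c^2/|\lambda|<1$, the telescoping sum $\sum(h^{(n+1)}-h^{(n)})$ converges uniformly on $\disk_r$, so $h:=\lim_n h^{(n)}$ is analytic on $\disk_r$ with $A_0(h)=0$ and $A_1(h)=1$. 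Passing to the limit in $h^{(n)}\circ f=\lambda\,h^{(n+1)}$ yields $h\circ f=\lambda h$. By the formal uniqueness, this analytic $h$ agrees with the formal candidate built above.

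The main obstacle is the convergence estimate: everything hinges on being able to squeeze $c$ strictly between $|\lambda|$ and $|\lambda|^{1/2}$, a step that works precisely because hyperbolicity together with $|\lambda|<1$ turns $f$ into a strict contraction on a small disk. The argument is uniform in the archimedean and non-archimedean cases, as it only uses the triangle inequality and multiplicativity of $|\cdot|$.
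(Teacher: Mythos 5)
The paper itself does not prove Theorem~\ref{thm:Koenigs}: it only cites Milnor, Siegel, Herman--Yoccoz and Koenigs. Your argument is exactly the classical Koenigs scheme used in those references (reduce to $|\lambda|<1$, formal recursion for uniqueness, dynamical limit $\lambda^{-n}f^{\circ n}$ for existence), and over $\bfk=\C$ it is complete and correct.

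One step, however, does not survive the passage to a general complete non-discrete valued field as written: the deduction ``the telescoping sum converges uniformly on $\disk_r$, so $h:=\lim_n h^{(n)}$ is analytic on $\disk_r$''. Over $\R$ a uniform limit of real-analytic functions need not be real-analytic, and over a non-archimedean field that is not algebraically closed the supremum of $|g|$ over the $\bfk$-points of $\disk_r$ can be strictly smaller than $\max_n|A_n(g)|r^n$ (for instance $g(z)=(z^p-z)/p$ satisfies $|g|\le 1$ on $\Z_p$ while $|A_1(g)|=p$), so uniform convergence of values does not control the coefficients of the limit. The repair is standard and uses only estimates you already have: the bounds $|f(z)|\le c|z|$ and $|f(z)-\lambda z|\le M|z|^2$ are derived from coefficient bounds, so they hold verbatim on the disk of radius $r$ over a completed algebraic closure of $\bfk$; equivalently, one can run the same induction directly in the majorant norm $\sum_n|A_n(g)|r^n$ (Gauss norm $\max_n|A_n(g)|r^n$ in the non-archimedean case), which is compatible with composition of germs vanishing at $0$. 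In the resulting Banach space of power series bounded on $\disk_r$ your telescoping series is Cauchy, its sum is a convergent power series whose coefficients lie in $\bfk$ (being limits of the coefficients of the $h^{(n)}$), and the rest of your argument --- $A_0(h)=0$, $A_1(h)=1$, passage to the limit in $h^{(n)}\circ f=\lambda\,h^{(n+1)}$, and identification with the formal solution --- goes through unchanged. With that one clarification the proof is correct in the stated generality.
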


\section{Embedding orientable surface groups} \label{sec_orientable}

\subsection{Abstract setting}

Our strategy to construct embeddings of surface groups relies on the following simple remark.
Let $\Gamma$ be a countable group, and $G$ be any group.
Consider a non-empty topological space $\calr$, with a map $\Phi:s\in\calr\mapsto \Phi_s\in \Hom(\Gamma,G)$.
Given $g\in\Gamma$, set $\calr_g=\{s\in\calr \; \vert \;  \Phi_s(g)=1\}$.

\begin{lem}\label{lem_strategie}
  Assume that $\calr$ has the following 3 properties:
  \begin{enumerate}
  \item \emph{Baire:}  $\calr$ is a Baire space;
  \item \emph{Separation:} for every $g\neq 1$  in $\Gamma$, $\Phi_s(g)\neq 1$ for some $s\in \calr$;
  \item \emph{Irreducibility:} for every $g\in\Gamma$, either $\calr_g=\calr$ or $\calr_g$ is closed with empty interior.
  \end{enumerate}
Then the set of $s\in\calr$ such that $\Phi_s$ is an injective homomorphism is a dense $G_\delta$ in $\calr$; in particular, it is non-empty.
\end{lem}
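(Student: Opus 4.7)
The plan is to realize the set of injective representations as a countable intersection of the complements of the $\calr_g$, and then to apply the Baire category theorem exactly in the form in which the three hypotheses of the lemma are tailored for it.

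First I would unravel what injectivity means in this context. A homomorphism $\Phi_s$ is injective precisely when its kernel is trivial, i.e.\ when $\Phi_s(g)\neq 1$ for every $g\in\Gamma\setminus\{1\}$. In the notation of the lemma this reads
\[
\{s\in\calr \; \vert \; \Phi_s \text{ is injective}\}=\bigcap_{g\in \Gamma\setminus\{1\}}\bigl(\calr\setminus\calr_g\bigr).
\]
Because $\Gamma$ is countable, the indexing set $\Gamma\setminus\{1\}$ is countable, so this is a countable intersection.

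Next, for each fixed $g\in\Gamma\setminus\{1\}$, I would argue that $\calr\setminus\calr_g$ is open and dense. The Separation hypothesis furnishes some $s_0\in\calr$ with $\Phi_{s_0}(g)\neq 1$, which rules out the case $\calr_g=\calr$ in the Irreducibility hypothesis; hence $\calr_g$ is closed with empty interior, and its complement $\calr\setminus\calr_g$ is open and dense in $\calr$.

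Finally, since $\calr$ is a Baire space, the countable intersection of open dense subsets displayed above is dense in $\calr$, and is a $G_\delta$ by construction; in particular it is non-empty, and the lemma follows. I do not expect a genuine obstacle here: the statement is a direct transcription of Baire's theorem, and the only ingredient not already packaged into the three assumptions is the countability of $\Gamma$, which is used solely to keep the intersection over $g\in\Gamma\setminus\{1\}$ countable.
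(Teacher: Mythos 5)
Your proof is correct and is essentially identical to the paper's: both identify the injective locus with $\bigcap_{g\neq 1}(\calr\setminus\calr_g)$, use Separation to rule out $\calr_g=\calr$ so that Irreducibility makes each complement open and dense, and conclude by the Baire property using the countability of $\Gamma$. Nothing further is needed.
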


\begin{proof}
  For any $g\in\Gamma\setminus\{1\}$, one has $\calr_g\neq \calr$ by (2), so $\calr_g$ is closed with empty interior by (3).
By the Baire property, $\calr\setminus (\cup_{g\in\Gamma\setminus\{1\}}\calr_g)$ is a dense $G_\delta$.
But $\calr\setminus (\cup_{g\in\Gamma\setminus\{1\}}\calr_g)$ is precisely the set of $s\in\calr$ such that $\Phi_s$ is injective.
\end{proof}

\subsection{Baumslag Lemma}\label{sec_Baumslag}

As explained in the introduction, it is proved in \cite{Baumslag} that the fundamental group of an orientable surface is  fully-residually free. 
We need a precise version of this result; to obtain it,
the main technical input is the Baumslag's Lemma (see {\cite[Lemma 2.4]{Olshanskii}}):
\begin{lem}[Baumslag's Lemma]\label{lem:BL}
 Let $n\geq 1$ be a positive integer. Let $g_0$, $\dots$, $g_n$ be elements of $\F_k$, and let $c_1$, $\dots$, $c_{n}$ be elements of $\F_k\setminus\{1\}$.
Assume that for all $1\leq i\leq n-1$, $g_i\m c_i g_i$ does not commute with $c_{i+1}$.
 Then for $N$ large enough, 
\[
g_0 c_1^N g_1 c_2^N\dots c_{n-1}^N g_{n-1} c_n^N g_n\neq 1.
\]
\end{lem}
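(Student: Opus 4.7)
The plan is to work geometrically using the natural action of $\F_k$ on its Cayley graph $T$, a simplicial tree of valence $2k$, and to perform a ping-pong in $T$. Under this action, each nontrivial element of $\F_k$ is hyperbolic: it preserves a unique bi-infinite geodesic, called its \emph{axis}, along which it translates by a positive amount, its \emph{translation length}. A feature specific to trees is that two hyperbolic isometries commute if and only if they share the same axis; in $\F_k$, this corresponds to lying in a common infinite cyclic subgroup. Moreover, two distinct bi-infinite geodesics in $T$ intersect in a (possibly empty) segment of bounded length.

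Writing $A_i \subset T$ for the axis of $c_i$ and $\ell_i > 0$ for its translation length, and using that $g_i^{-1} c_i g_i$ has axis $g_i^{-1}\cdot A_i$, the non-commutation hypothesis becomes the geometric condition that for every $1\le i \le n-1$, the lines $A_i$ and $g_i\cdot A_{i+1}$ are distinct in $T$. Consequently, there exists a constant $C$, depending only on the data $(g_j, c_j)$ and not on $N$, with
\[
\diam\bigl(A_i \cap g_i\cdot A_{i+1}\bigr) \le C \qquad \text{for every } 1\le i\le n-1.
\]

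I would then trace the broken geodesic in $T$ given by the orbit of a basepoint $x$ under the successive factors of $W_N := g_0 c_1^N g_1 \cdots c_n^N g_n$: set $y_0 = x$, $y_1 = g_n\cdot x$, $y_2 = c_n^N\cdot y_1$, \ldots, $y_{2n+1} = W_N\cdot x$. Each ``long'' leg $y_{2k-1}\to y_{2k}$, corresponding to a translation by $c_j^N$, has length at least $N\ell_j$, while the ``short'' legs coming from the fixed $g_j$'s contribute a bounded additive cost once $x$ is chosen in a bounded region meeting all relevant axes. In a tree the identity
\[
d(y_0, y_2) = d(y_0, y_1) + d(y_1, y_2) - 2\,d\bigl(y_1, [y_0, y_2]\bigr)
\]
iterated along the path yields
\[
d(x, W_N\cdot x) \ge \sum_{j} d(y_{j-1}, y_j) - 2\sum_{j} d\bigl(y_j, [y_{j-1}, y_{j+1}]\bigr),
\]
reducing the proof to a uniform bound, independent of $N$, on the backtracks $d(y_j, [y_{j-1}, y_{j+1}])$ at each junction.

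The main obstacle is establishing this uniform backtrack bound at the ``axis-switching'' junctions, and this is the unique place where the hypothesis is used. The key point is that for $N$ large, $y_{2k}$ lies near one end of $A_{j+1}$, so the direction from $y_{2k+1} = g_j\cdot y_{2k}$ toward $y_{2k}$ follows $g_j\cdot A_{j+1}$; on the other hand, the direction from $y_{2k+1}$ toward $y_{2k+2} = c_j^N\cdot y_{2k+1}$ follows $A_j$. Because the lines $A_j$ and $g_j\cdot A_{j+1}$ are distinct, their overlap has diameter at most $C$, forcing the backtrack at $y_{2k+1}$ to be bounded by $C + O(1)$ uniformly in $N$. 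Summing over the $O(n)$ junctions then yields
\[
d(x, W_N\cdot x) \ge N\sum_{i=1}^n \ell_i - O(nC),
\]
which is strictly positive for $N$ large enough, so $W_N \neq 1$. Without the non-commutation hypothesis one could have $A_j = g_j\cdot A_{j+1}$, in which case the two translations by $c_j^N$ and $c_{j+1}^N$ could conspire to cancel each other entirely, and the lower bound would collapse.
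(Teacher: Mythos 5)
Your route---a quantitative ping-pong in the Cayley tree, bounding the displacement $d(x,W_N\cdot x)$ from below---is genuinely different from the paper's, which runs a purely topological north--south dynamics argument on a boundary (of $\Hyp$ in Sketch I, of $\partial\F_k$ in Sketch II) and needs no metric estimates at all. Unfortunately, as written your argument has a genuine gap: the claimed uniform bound on the backtracks is false. After applying $c_{j+1}^N$, the point $y_{2k}$ (hence also $y_{2k+1}=g_j\cdot y_{2k}$) lies at distance of order $N$ from the axis $A_j$ of the next letter. The geodesic from $y_{2k+1}$ to $c_j^N\cdot y_{2k+1}$ therefore does not ``follow $A_j$'' from the start: it first travels from $y_{2k+1}$ all the way back to the projection of $y_{2k+1}$ onto $A_j$ (a point that stabilizes as $N\to\infty$, precisely because the hypothesis prevents $g_j\cdot\omega_{j+1}$ from being an end of $A_j$), and this approach segment of length $\sim N$ runs backward along $g_j\cdot A_{j+1}$, coinciding with the initial segment of $[y_{2k+1},y_{2k}]$. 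Concretely, take $n=2$, $c_1=a$, $c_2=b$, $g_0=g_1=g_2=1$ in $\F_2=\grp{a,b}$, and $x$ the identity vertex: the hypothesis holds, the overlap constant is $C=0$ since $A_a\cap A_b$ is a single vertex, yet the backtrack at the junction $b^N$ of the path $1\to b^N\to a^Nb^N$ equals $d\bigl(b^N,[1,a^Nb^N]\bigr)=N$. Feeding your bounds into your inequality would give $d(x,W_N\cdot x)\geq 4N-O(1)$, whereas the true value is $2N$; so the intermediate claim is not merely unproved but wrong.

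The final conclusion is still reachable along these lines, because the unbounded backtrack is exactly compensated by the corresponding leg being longer than $N\ell_j$ (namely $d(z,c_j^Nz)=N\ell_j+2\,d(z,A_j)$); but capturing that compensation requires a different decomposition of $[x,W_N\cdot x]$, e.g.\ via entry/exit points on the successive translated axes $g_0\cdot A_1$, $g_0c_1^Ng_1\cdot A_2$, \dots, whose consecutive overlaps the hypothesis genuinely does bound---and at that point one is essentially rewriting the paper's boundary argument. Two smaller issues to fix as well: the iterated inequality $d(y_0,y_m)\geq\sum_j d(y_{j-1},y_j)-2\sum_j d\bigl(y_j,[y_{j-1},y_{j+1}]\bigr)$ is not a formal consequence of the three-point identity and requires the standard local hypothesis that consecutive backtracks do not exhaust the intermediate leg; and two distinct bi-infinite geodesics in a tree can share a ray, so the bounded-overlap statement must be derived from the group-theoretic fact that axes of non-commuting elements of $\F_k$ share no end, not asserted for arbitrary lines.
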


 \begin{proof}[Sketch of proof (I)]  
 The group $\PSL_2(\R)$ acts on the hyperbolic plane $\Hyp$ by isometries, and contains a 
 subgroup $\Gamma$ such that (0) $\Gamma$ is isomorphic  to $\F_k$, (1) every element
 $g\neq \Id$ in  $\Gamma$ is a loxodromic isometry of $\Hyp$, and (2) two elements $g$ and $h$
 in $\Gamma\setminus\{\Id\}$ commute if and only if they have the same axis, if and only if they 
 share a common fixed point on $\partial \Hyp$. One can find such a group in any lattice of
 $\PSL_2(\R)$. To prove the lemma, we prove it in $\Gamma$. 
 
 Fix a base point $x\in \Hyp$, 
 denote by $\alpha_i$ and $\omega_i$ the repulsive and attracting fixed points of $c_i$ in 
 $\partial \Hyp$, and consider the word 
 \[
 g_0c_1^N g_1 c_2^N g_2.
 \]
 For $m$ large enough,  $c_2^m g_2$ maps $x$ to a point which is near $\omega_2$. 
 If $g_1(\omega_2)$ were equal to $\alpha_1$, then $c_1$ and $g_1c_2g_1^{-1}$ would share the common fixed point $\alpha_1$, and they would 
 commute. Thus, $g_1(\omega_2)\neq \alpha_1$ and then $g_0 c_1^{m'} g_1 c_2^m g_2$ maps $x$ 
 to a point which is near $g_0(\omega_1)$ if $m'$ is large enough. Thus,  $g_0 c_1^N g_1 c_2^N g_2(x)\neq x$ for large $N$. 
 The proof is similar if $n$ is larger than $2$.
 \end{proof}

 \begin{proof}[Sketch of proof (II)]  We rephrase this proof, using the action of $\F_k$ on its boundary, because this boundary will also be used in the proof of 
 Proposition~\ref{pro_twist3bords}. 
 
 Fix a basis $a_1, \ldots, a_k $ of $\F_k$, and denote by  $\partial \F_k$ the boundary of $\F_k$. The elements of $\partial \F_k$ are represented by infinite
 reduced words in the generators $a_i$ and their inverses. If
 $g$ is an element of $\F_k$ and $\alpha$ is an element of $\partial \F_k$ the concatenation $g\cdot \alpha$ is an element
 of $\partial \F_k$: this defines an action of $\F_k$ by homeomorphisms on the Cantor set $\partial \F_k$. If $g$ is a non-trivial, its
 action on $\partial \F_k$ has exactly two fixed points, given by the infinite words $\omega(g)=g \cdots g\cdots $ and $\alpha(g)=g^{-1} \cdots g^{-1}\cdots $
 (there are no simplifications if $g$ is given by a reduced and cyclically reduced word).
 Then we get: (1) every element
 $g\neq \Id$ in  $\F_k$ has a north-south dynamics on $\partial \F_k$, every orbit $g^n\cdot \beta$ converging to $\omega(g)$, except when $\beta=\alpha(g)$, and (2) two elements $g$ and $h$
 in $\F_k\setminus\{\Id\}$ commute if and only if they have the same fixed points, if and only if they 
 share a common fixed point on $\partial \F_k$. One can then repeat the previous proof with the action of $\F_k$ on its boundary.  \end{proof}

\begin{figure}[h]
\centering\epsfig{figure=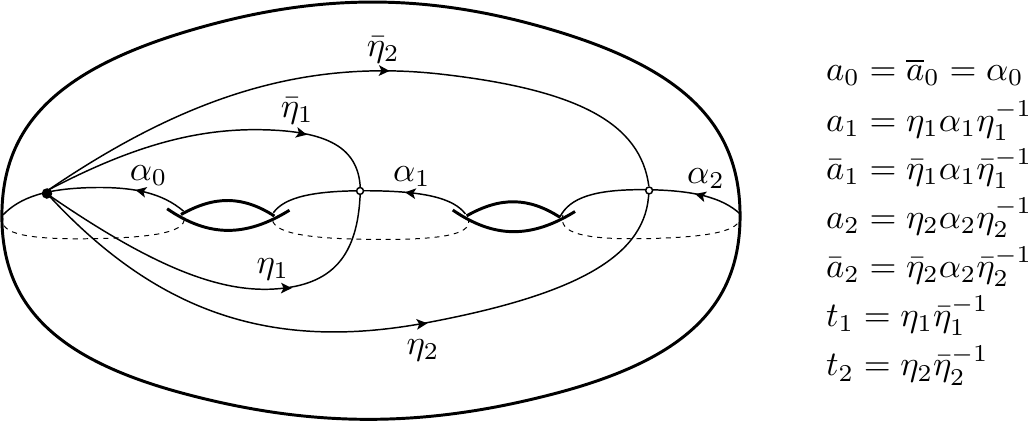}
\caption{{\small{The fundamental group $\Gamma_2$.-- The $\alpha_i$ are three loops, while the $\eta_j$ and ${\overline{\eta_j}}$ are 
four paths. The figure is symmetric with respect to the plane cutting the surface along the loops $\alpha_i$. 
}}}\label{fig:G2}
\end{figure}

Write the surface of genus $2$ as the union of two pairs of pants as in Figure \ref{fig:G2}, with respective fundamental groups 
\begin{equation}
\grp{a_0,a_1,a_2\; |\; a_0a_1a_2=1}
\; \text{ and } \; \grp{\ol a_0,\ol a_1,\ol a_2\; |\; \ol a_0 \ol a_1 \ol a_2=1}.
\end{equation}
This gives the presentation 
\begin{align}
\Gamma_2=\left\langle 
  \begin{matrix}
    a_0,a_1,a_2,\\ 
\ol a_0,\ol a_1,\ol a_2,\\
t_1,t_2
  \end{matrix}
\ \middle|\ 
  \begin{matrix}
    a_0a_1a_2=1,\\ \ol a_0\ol a_1\ol a_2=1,\\
\ol a_0=a_0,\ \ol a_1=t_1\m a_1 t_1,\ \ol a_2=t_2\m a_2 t_2 
  \end{matrix}
\right\rangle
\end{align}
which can be rewritten as
\begin{align}
\Gamma_2=\langle a_0,a_1,a_2,t_1,t_2\ |\ a_0a_1a_2=1,\ a_0 t_1\m a_1 t_1 t_2\m a_2 t_2=1\ \rangle.
\end{align}
Denote by  $p:\Gamma_2\ra \grp{a_0,a_1,a_2}\simeq \F_2$  the morphism defined by $p(a_i)=a_i$, 
$p(\ol a_i)=a_i$, and $p(t_1)=p(t_2)=1$.
Let $\tau:\Gamma_2\ra \Gamma_2$ be the (left) Dehn twist along the three curves $a_0$, $a_1$, and $a_2$, i.e. the automorphism
fixing $a_i$ and sending $t_i$ to $a_i t_i a_0\m$ for $i=1,2$.
Note the following facts:
\begin{itemize}
\item  $\tau$ sends $\ol a_i$ to $ a_0 \ol a_i a_0\m$; in particular, if $g$ is a word in the $\ol a_i$, then 
$\tau^N(g)=a_0^{N} g a_0^{-N}$;
\item $p\circ\tau^N$ fixes $a_i$ for every $i=0$, $1$, $2$, and 
\begin{equation}\label{eq:pcirctauN}
p\circ\tau^N(t_j)=a_j^Na_0^{-N}
\end{equation}
 for $j=1$, $2$.
\end{itemize}
\begin{pro}\label{pro_twist3bords}
For every $g\in \Gamma_2\setminus\{1\}$, there exists a positive  integer $n_0$ such that 
$p\circ\tau^N(g)\neq 1$ for all $N\geq n_0$.
\end{pro}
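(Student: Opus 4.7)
The plan is to deduce the Proposition from Baumslag's Lemma (Lemma~\ref{lem:BL}) applied inside the free group $\F_2 = \grp{a_1,a_2}$, identified with $\pi_1(P) = \grp{a_0, a_1, a_2 \mid a_0 a_1 a_2 = 1}$ via $a_0 = a_2\m a_1\m$.

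First, I would use the graph-of-groups structure on $\Gamma_2$ coming from the three non-separating curves $\alpha_0, \alpha_1, \alpha_2$: cutting along them realises $\Gamma_2$ as a double HNN extension, with stable letters $t_1, t_2$, of the amalgamated free product $A = \pi_1(P) *_{\grp{a_0}} \pi_1(\ol P)$. Any non-trivial $g \in \Gamma_2$ then admits a Bass--Serre normal form
\[
g = w_0\, t_{j_1}^{\epsilon_1}\, w_1\, t_{j_2}^{\epsilon_2}\, w_2 \cdots t_{j_n}^{\epsilon_n}\, w_n,
\]
with $w_k \in A$ satisfying the HNN pinch conditions, each $w_k$ itself written in amalgamated product normal form as an alternating product of syllables from $\pi_1(P)$ and $\pi_1(\ol P)$ lying outside $\grp{a_0}$.

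Next, I would substitute into this normal form the explicit formulas $p\circ\tau^N(a_i) = a_i$, $p\circ\tau^N(\ol a_i) = a_0^N a_i a_0^{-N}$, $p\circ\tau^N(t_j) = a_j^N a_0^{-N}$, and $p\circ\tau^N(t_j\m) = a_0^N a_j^{-N}$. Each stable letter and each $\ol a_i$-syllable contributes two consecutive $N$-powers; after telescoping any obvious $a_0^{-N}\cdot a_0^{N}$ juxtapositions at syllable boundaries, $p\circ\tau^N(g)$ takes the form
\[
p\circ\tau^N(g) = h_0\, c_1^N\, h_1\, c_2^N \cdots c_m^N\, h_m,
\]
with $c_l \in \{a_0^{\pm 1}, a_1^{\pm 1}, a_2^{\pm 1}\}$ and $h_l \in \F_2$ independent of $N$. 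The boundary case $m = 0$ forces $g \in \pi_1(P) = \grp{a_0, a_1, a_2}$ written without any $t_j$ or $\ol a_i$, and then $p\circ\tau^N(g) = g \neq 1$ directly.

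The crucial step is to verify the hypothesis of Lemma~\ref{lem:BL}: $h_l\m c_l h_l$ does not commute with $c_{l+1}$ in $\F_2$. Since the centraliser of a non-trivial element of a free group is cyclic, such a commutation would force $c_l$ and $c_{l+1}$ to be powers of a common element $h$ together with $h_l \in \grp{h}$. A case analysis on the junction type --- inside a single stable letter, inside a single $\ol a_i$-syllable, at the boundary between two syllables of a $w_k$, or at the boundary between $w_k$ and a stable letter --- shows that the only configurations producing this collision are precisely those excluded by the amalgamated product normal form inside each $w_k$ (each middle syllable lies outside $\grp{a_0}$) and by the HNN pinch conditions at each stable letter (the element between opposite stable letters lies outside $\grp{a_j}$ or $\grp{\ol a_j}$). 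Lemma~\ref{lem:BL} then delivers $p\circ\tau^N(g)\neq 1$ for $N$ large enough. The main obstacle will be this case analysis, which is routine but delicate because of the telescoping at the boundaries of syllables; it may be cleaner to argue geometrically via the action of the images $p_N(\Gamma_2)$ on the Cantor boundary $\partial\F_2$, in the spirit of the second sketch of proof of Baumslag's Lemma.
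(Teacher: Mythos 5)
Your proposal is correct and follows essentially the same route as the paper: decompose $\Gamma_2$ along the three curves (the paper writes the resulting normal form symmetrically by setting $t_0=1$ and alternating $A$- and $\ol A$-syllables, which is just a repackaging of your double-HNN-over-an-amalgam description), substitute $p\circ\tau^N$, telescope the $a_0^{\pm N}$ factors, and feed the result to Baumslag's Lemma. The case analysis you defer is exactly the content of the paper's two claims — that a minimal-length normal form forces each $h_l\m c_l h_l$ not to commute with $c_{l+1}$, using that the $a_i$ are not proper powers and that $p$ is injective on each vertex group — so make sure to carry it out, since that is where the actual work lies.
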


\begin{proof}
 To uniformize notations, we define $t_0=1$ so that for all $i\in\{0,1,2\}$ the relation $t_i \ol a_i t_i\m =a_i$
holds, and $\tau$ maps $t_i$ to $a_i t_i a_0\m$.
Let $A=\grp{a_0,a_1,a_2}$ and $\ol A=\grp{\ol a_0,\ol a_1,\ol a_2}$.
 Write $g$ as a shortest possible word of the following form:
\begin{equation}
g=g_0t_{i_1}g_1t_{i_2}\m g_2 t_{i_3} \dots g_{n-1}t_{i_n}\m g_n
\end{equation}
where $n$ is even, $i_k\in\{0,1,2\}$ for all $k\leq n$, $g_k\in A$ for $k$ even, $g_k\in \ol A$ for $k$ odd,
and the exponent 
of $t_{i_k}$ is $(-1)^{k+1}$ (we allow $g_k=1$).
One easily checks that $g$ can be written in this form because all generators can (for instance $t_1=1\cdot t_1 \cdot 1 \cdot t_0\m \cdot 1$).

If $k$ is such that $i_{k}=i_{k+1}$, then $g_k\notin \grp{a_{i_k}}$ if $k$ is even (resp $g_k\notin \grp{\ol a_{i_k}}$ if $k$ is odd)
as otherwise, one could shorten the word using the relation $t_{i_k} a_{i_k} t_{i_k}^{-1}=\ol a_{i_k}$.

\smallskip

{\bf{First claim.}} {\emph{If  $k\in\{2,\dots, n-2\}$ is even, $g_{k}\m a_{i_k} g_{k}$ does not commute to $a_{i_{k+1}}$}}.

\smallskip

If $i_k\neq i_{k+1}$, this is because $g_{k}\in A\simeq \F_2$ and no pair of $A$-conjugates of $a_{i_k}$ and $a_{i_{k+1}}$ commute.
If $i_k=i_{k+1}$, then $g_k\notin \grp{a_{i_k}}$ as we have just seen;
since $a_{i_k}$ is not a proper power in $A$, 
this shows that $g_k.(a_{i_k}^{+\infty})\neq a_{i_k}^{+\infty}$ in the boundary at infinity of the free group $A$,
so $g_k\m a_{i_k} g_k$ does not commute with $a_{i_k}$, and the claim follows.

\smallskip

Similarly, using the fact that $g_k\in\ol A$ for odd indices, we obtain: 

\smallskip

{\bf{Second claim.}} {\emph{
If $k\leq n-1$ is odd, $g_k\m \ol a_{i_k} g_k$ 
does not commute to $\ol a_{i_{k+1}}$}}.

\smallskip

We have $\tau^N(g_k)=g_k$ if $k$ is even, and $\tau^N(g_k)=a_0^Ng_k a_0^{-N}$ if $k$ is odd. After simplifications, one has
\begin{equation}
\tau^N(g)=g_0 a_{i_1}^{N}t_{i_1} g_1 t_{i_2}\m a_{i_2}^{-N} g_2 a_{i_3}^Nt_{i_3}  \dots g_{n-1}t_{i_n}\m a_{i_{n}}^{- N} g_n .
\end{equation}
For $k$ odd, denote by $g'_k\in \F_r$ the image of $g_k$ under $p$.
Applying $p$, we thus get
\begin{equation}
p\circ \tau^N(g)=g_0 a_{i_1}^{N} g'_1 a_{i_2}^{-N} g_2 a_{i_3}^N g'_3 \dots g'_{n-1} a_{i_{n}}^{- N} g_n,
\end{equation}
with $g'_i:=p(g_i)$.
Let us check that the hypotheses of the Baumslag Lemma~\ref{lem:BL} apply.
For $k$ even,  the first claim  shows that $g_{k}\m a_{i_k} g_{k}$ does not commute to $a_{i_{k+1}}$, as required.
For $k$ odd, we use that $p$ is injective on $\ol A$ and that $\ol A$ contains $g_k\m \ol a_{i_k} g_k$ and $\ol a_{i_{k+1}}$,
and we apply the second claim to deduce that  $g_k'^{-1}  a_{i_k} g'_k$ does not commute to $a_{i_{k+1}}$.
Applying Baumslag's Lemma, we conclude that $p\circ\tau^N(g)\neq 1$ for $N$ large enough.  
\end{proof}

\subsection{Embeddings of free groups} \label{par:free-groups1}
The group $\Diff(\R,0)$ contains non-abelian free groups. 
This has been proved by arithmetic means \cite{White:1988,Glass:1992}, 
by looking at the monodromy of generic polynomial planar vector fields \cite{IP}, 
and by a dynamical argument \cite{MRR}. We shall need the following precise version of that result.

\begin{thm}\label{thm_F2} 
 Let $(\bfk, \vert \cdot\vert)$ be a complete non-discrete valued field.
 For every pair $(\lambda_1,\lambda_2)$ in $\bfk^*$, 
  there exists a pair $f_1$, $f_2\in\Diff(\bfk,0)$ that generates a free group
and satisfies $f_1'(0)=\lambda_1$ and $f_2'(0)=\lambda_2$.
\end{thm}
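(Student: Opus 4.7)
The plan is to construct $f_1, f_2$ with the prescribed derivatives by means of Koenigs's Linearization Theorem (Theorem~\ref{thm:Koenigs}) and to verify that they generate a free group by a dynamical/algebraic argument exploiting Lemma~\ref{coefficientswordmap}.

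In the hyperbolic case, when $|\lambda_1| \neq 1$ and $|\lambda_2| \neq 1$, I would take $f_1(z) = \lambda_1 z$ (linear, as allowed by Koenigs) and
\[
f_2(z) = h^{-1}\bigl(\lambda_2 \cdot h(z)\bigr),
\]
where $h \in \Diff(\bfk,0)$ is tangent to the identity and carries one or more free parameters, for instance $h(z) = z + c z^k$ for some $c \in \bfk^*$ and some integer $k \geq 2$. This guarantees $f_1'(0) = \lambda_1$, $f_2'(0) = \lambda_2$, and convergence of both germs on a common disk. To prove freeness of $\langle f_1, f_2 \rangle$, I would use Lemma~\ref{coefficientswordmap}: for each nontrivial word $w \in \F_2$, the Taylor coefficients $A_n(w(f_1,f_2))$ are polynomials in $\lambda_1, \lambda_2$ and in the parameters of $h$. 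The equation $w(f_1,f_2) = \Id$ is then equivalent to the simultaneous vanishing of infinitely many such polynomials; a Baire-type / dimension argument in the parameter space of germs $h$ shows that for generic choices of $c$ no nontrivial word can vanish, yielding the free group.

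For the non-hyperbolic case where $|\lambda_i| = 1$ for some $i$, Koenigs fails, and I would argue by perturbation: first realize freeness for nearby hyperbolic values of $\lambda_i$ as above, and then deform continuously to the prescribed values, invoking that freeness is an open condition in a suitable Baire topology on $\Diff(\bfk,0)$ (such as the Leslie topology of Section~\ref{sec-appendix}). Alternatively, one constructs the germs by hand using the explicit techniques of the appendix referenced in Part~IV.

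The principal obstacle is that both generators fix the origin, so the classical Schottky/Tits ping-pong on $\bfk$ cannot be invoked directly --- the stabilizer of $0$ in $\PSL_2(\bfk)$ is solvable, which rules out any M\"obius-only strategy. The resolution lies in the higher-order (non-M\"obius) Taylor data carried by $h$: its nonlinear part forces $f_2$ out of any Borel-like subgroup of $\Diff(\bfk,0)$, providing just enough non-commuting dynamics to prevent any nontrivial relation. Handling the non-hyperbolic case, where Koenigs does not apply, is the most delicate point and may require the explicit ad hoc constructions of the appendix.
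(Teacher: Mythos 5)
There is a genuine gap at the heart of your genericity argument. To conclude that ``for generic choices of $c$ no nontrivial word can vanish'', you need to know, for \emph{each} nontrivial $w\in\F_2$, that the polynomial functions $A_n(w(f_1,f_2))-A_n(\Id)$ of the parameters are not all identically zero on your parameter space --- i.e.\ that there exists at least one choice of $h$ with $w(f_1,f_2)\neq\Id$. That existence statement is the actual content of the theorem, and nothing in your proposal supplies it; the remark that the nonlinear part of $h$ ``forces $f_2$ out of any Borel-like subgroup'' rules out some relations but not an arbitrary word. The paper devotes its appendix precisely to this point: Theorem~\ref{thm:MRR} produces, for each word $w$ of length $\ell$, an explicit polynomial conjugator $h$ (built recursively by perturbations $z+\eta z^2\prod_j(z-z_j)$, of degree up to $(2\ell)!$) that forces the orbit $z_0,z_1,\dots,z_\ell$ of a well-chosen point to consist of pairwise distinct points, whence $w(f_h,g)(z_0)\neq z_0$. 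Only then does a transcendence argument (taking $h_0$ with algebraically independent coefficients over the field generated by the coefficients of $f$ and $g$) upgrade ``one good $h$ per word'' to ``one $h$ good for all words simultaneously''. Your single one-parameter family $h(z)=z+cz^k$ is not known to be rich enough to contain a witness for every word, so the Baire step cannot start.

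The treatment of the ``non-hyperbolic'' case is also off target. Koenigs is not needed to define $f_2=h^{-1}\circ m_{\lambda_2}\circ h$: conjugating the homothety $m_{\lambda_2}$ by any convergent germ $h$ yields a convergent germ with multiplier $\lambda_2$, whatever $|\lambda_2|$ is. The genuine exceptional case is when $\lambda_i$ is a root of unity, since then $m_{\lambda_i}$ has finite order and so does any conjugate of it; the paper fixes this by starting from $\lambda_i z+z^2$ instead of $\lambda_i z$, which has infinite order. Finally, your fallback ``freeness is an open condition'' is false: for a fixed nontrivial $w$, the condition $w(f_1,f_2)\neq\Id$ is open, but freeness is the countable intersection of these, hence only a $G_\delta$, and it does not persist under deformation to a prescribed limit parameter. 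So the perturbation argument for $|\lambda_i|=1$ does not close.
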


This result is proved in \cite[Proposition 4.3]{BCN} for generic pairs of derivatives $(\lambda_1,\lambda_2)$. We
provide a proof of Theorem~\ref{thm_F2}  in the Appendix, extending the argument of \cite{MRR}. We refer to 
Section~\ref{par:free-group-Z} below for other approaches.

\subsection{Embedding orientable surface groups}
We can now prove  Theorem~A for orientable surfaces:

\begin{thm}\label{thm_g2}
Let   $\Gamma_g$ be the fundamental group of a closed, orientable surface of genus $g$. 
Then, there exists an injective morphism $\Gamma_g\ra \Diff(\R,0)$.
\end{thm}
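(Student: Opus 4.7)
The plan is to apply Lemma~\ref{lem_strategie} in the genus-$2$ case, and to reduce higher genera to it. For $g=0$ and $g=1$ the statement is immediate: $\Gamma_0$ is trivial, and $\Gamma_1\cong\Z^2$ embeds into any one-parameter subgroup of $\Diff(\R,0)$ obtained by Koenigs linearization of a hyperbolic germ. For $g\geq 3$, the surface $\Sigma_g$ arises as an unbranched $(g-1)$-fold cover of $\Sigma_2$ (via, say, a surjection $\Gamma_2\twoheadrightarrow\Z/(g-1)$), so $\Gamma_g$ sits as a finite-index subgroup of $\Gamma_2$, and it suffices to produce an embedding $\Gamma_2\hookrightarrow\Diff(\R,0)$.

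For the genus-$2$ case, I first fix three positive reals $\lambda_0,\lambda_1,\lambda_2\neq 1$ satisfying $\lambda_0\lambda_1\lambda_2=1$. Theorem~\ref{thm_F2} produces $f_1,f_2\in\Diff(\R,0)$ generating a free group with $f_i'(0)=\lambda_i$; setting $f_0:=(f_1f_2)^{-1}$ yields an embedding $\iota$ of $\langle a_0,a_1,a_2\mid a_0a_1a_2=1\rangle\cong\F_2$ into $\Diff(\R,0)$ via $a_i\mapsto f_i$. Each $f_i$ is hyperbolic, so Koenigs's theorem (Theorem~\ref{thm:Koenigs}) provides $h_i\in\Diff(\R,0)$ tangent to the identity conjugating $f_i$ to the homothety $M_{\lambda_i}(z)=\lambda_iz$; since $\lambda_i>0$, the formula $\phi_i^s:=h_i^{-1}\circ M_{\lambda_i^s}\circ h_i$ defines for $s\in\R$ a one-parameter subgroup of $\Diff(\R,0)$ through $f_i$, with $\phi_i^s$ commuting with $f_i$ for every $s$, whose coefficients depend real-analytically on $s$. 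I then define $\rho_s\colon\Gamma_2\to\Diff(\R,0)$ on generators by $\rho_s(a_i)=f_i$ and $\rho_s(t_j)=\phi_j^s\circ\phi_0^{-s}$. The relation $a_0a_1a_2=1$ holds by construction; for the relation $a_0t_1^{-1}a_1t_1t_2^{-1}a_2t_2=1$, the commutation $\phi_j^{-s}f_j\phi_j^s=f_j$ reduces $\rho_s(t_j^{-1}a_jt_j)$ to $\phi_0^sf_j\phi_0^{-s}$, so the left-hand side becomes $f_0\cdot\phi_0^s(f_1f_2)\phi_0^{-s}=f_0\cdot\phi_0^sf_0^{-1}\phi_0^{-s}=1$ (using that $\phi_0^s$ commutes with $f_0$). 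By construction $\rho_N=\iota\circ p\circ\tau^N$ at every integer $N$, in view of~\eqref{eq:pcirctauN}.

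It then remains to apply Lemma~\ref{lem_strategie} with $\calr=\R$ and $\Phi_s=\rho_s$. Baireness is automatic; separation follows from Proposition~\ref{pro_twist3bords} together with the injectivity of $\iota$ on $\F_2$, since for $g\neq 1$ the relation $\rho_N=\iota\circ p\circ\tau^N$ and Baumslag's property force $\rho_N(g)\neq 1$ for $N$ large. For irreducibility I would observe that each coefficient $A_n(\rho_s(g))$ is a polynomial in the $A_k(\phi_i^{\pm s})$, which are themselves polynomials in $\lambda_i^{\pm s}=e^{\pm s\log\lambda_i}$; hence the map $s\mapsto A_n(\rho_s(g))$ is real-analytic on $\R$. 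Consequently $\calr_g=\bigcap_{n\geq 1}\{s:A_n(\rho_s(g))=c_n\}$ (with $c_1=1$ and $c_n=0$ for $n\geq 2$) is closed, and either equals $\R$ or is contained in a discrete set and therefore has empty interior. Lemma~\ref{lem_strategie} then yields an $s^*\in\R$ for which $\rho_{s^*}$ is injective. I expect the most delicate step to be irreducibility: securing real-analyticity of the coefficient functions in $s$ is precisely what forces the choice of positive derivatives $\lambda_i$ and the use of a Dehn twist along three non-separating curves (rather than one separating curve, whose image would be a commutator tangent to the identity, to which Koenigs does not apply).
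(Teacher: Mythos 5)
Your proof is correct and follows essentially the same route as the paper: reduction to genus $2$, the free group from Theorem~\ref{thm_F2} with hyperbolic generators, the Koenigs flows through $f_0,f_1,f_2$, and Lemma~\ref{lem_strategie} with separation supplied by Proposition~\ref{pro_twist3bords} via $\rho_N=\iota\circ p\circ\tau^N$. The only (harmless) deviation is that you use the one-parameter diagonal family $s\mapsto \phi_j^{s}\phi_0^{-s}$ and real-analyticity in $s$ for irreducibility, where the paper takes the three-parameter family $\calr=(\R_+^*)^3$ and Laurent-polynomial dependence on $(s_0,s_1,s_2)$; both families contain the representations needed for separation, so either variant works.
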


The group $\Gamma_0$ is trivial. The group $\Gamma_1$ is isomorphic to $\Z^2$, hence it embeds in the 
group of homotheties $z\mapsto \lambda z$, $\lambda\in \R_+^*$. If $g\geq 2$, then $\Gamma_g$ embeds in $\Gamma_2$. To see this, fix a surjective morphism $\Gamma_2\to \Z$, and take the preimage $\Lambda\subset \Gamma_2$ of the subgroup $(g-1)\Z\subset \Z$. Then, $\Lambda$ is a  normal subgroup of index $g-1$ in $\Gamma_2$, and  it is the fundamental group of a closed surface $\Sigma$, given by a Galois cover of degree $g-1$ of the surface of genus $2$. Since the Euler characteristic is multiplicative, the genus of $\Sigma$ satisfies $-2(g-1)=2-2g(\Sigma)$. Thus, $g(\Sigma)=g$ and $\Lambda$ is isomorphic to $\Gamma_g$. Thus,  we now restrict to the case $g=2$.

\smallskip

By Theorem~\ref{thm_F2}, we can fix an injective morphism
\begin{equation}\label{eq_rho0}
\rho_0:\F_2=\grp{a_0,a_1,a_2\; |\; a_0a_1a_2=1}\ra \Diff(\R,0)
\end{equation}
such that the images $f_1=\rho_0(a_1)$, $f_2=\rho_0(a_2)$, and   $f_0=\rho_0(a_0)=f_2\m f_1\m$ satisfy 
\begin{equation}
 f_1'(0)=\lambda_1>1, \; \;  f_2'(0)=\lambda_2>1,\; \;  f_0'(0)=\lambda_0<1
\end{equation}
for some real numbers $\lambda_1$ and $\lambda_2>1$ and $\lambda_0=(\lambda_1\lambda_2)^{-1}$.
In particular, $f_0$, $f_1$, and $f_2$ are hyperbolic. 
For $\lambda\in\R^*$, denote by $m_\lambda(z)=\lambda z$ the corresponding homothety.
For $i\in\{0,1,2\}$,
the Koenigs linearization theorem shows that $f_i$ is conjugate to the homothety $m_{\lambda_i}$: there is a germ 
of diffeomorphism $h_i\in \Diff(\R,0)$ such that 
$f_i=h_i\circ m_{\lambda_i}\circ h_i\m$.
Thus $f_i$ extends to the multiplicative flow $\phi_i:\R_+^*\ra \Diff(\R,0)$ defined by
$\phi_i^s=h_i\circ m_{s} \circ h_i\m$ for $s\in\R_+^*$; by contruction, $\phi_i^{\lambda_i}=f_i$ and 
$\phi_i^s$ commutes with $f_i$ for all $s>0$.
We note that $s\mapsto \phi_i^s$ is polynomial in the sense that for all $k\in\N$,
$s\mapsto A_k(\phi_i^s)$ is a polynomial function with real coefficients in the variables $s$ and $s\m$.

Set $\calr=(\R_+^*)^3$. As in Section~\ref{sec_Baumslag}, consider the presentation 
\begin{align}
\Gamma_2=\langle a_0,a_1,a_2,t_1,t_2\ |\ a_0a_1a_2=1,\ a_0 t_1\m a_1 t_1 t_2\m a_2 t_2=1\ \rangle.
\end{align}
Given $s=(s_0,s_1,s_2)\in(\R_+^*)^3$, we define a  morphism $\Phi_s:\Gamma_2 \to \Diff(\R,0)$ by
\begin{align}
\Phi_s(a_i) & = f_i\text{\quad for $i\in\{0,1,2\}$} \\
\Phi_s(t_i)& = \phi_i^{s_i}\phi_0^{s_0}\text{\quad for $i\in\{1,2\}$} 
\end{align}
This provides a well defined homomorphism because $\phi_i$ commutes with $f_i$. As we shall see below, this morphism $\Phi_s$ 
is constructed to coincide with $\rho_0\circ p\circ \tau^N$ for $s=(\lambda_0^N,\lambda_1^N,\lambda_2^N)$ (see~Equation~\eqref{eq:pcirctauN}).

\begin{rem}\label{rem_g2} For every $s\in \calr$, the image of $\Phi_s$ contains $f_1$ and $f_2$, hence the free group $\rho_0(\F_2)$. This
will be used in Section~\ref{par:embeddings-other-fields-dense}. \end{rem}

To conclude, we  check that the three assumptions of Lemma~\ref{lem_strategie} hold for this family of 
morphisms $(\Phi_s)_{s\in \calr}$.

Clearly, $\calr$ is a Baire space.

To check the irreducibility property, consider $g\in\Gamma_2$ and assume
that $\calr_g\neq \calr$: this means that  there exists a parameter $s\in\calr$ and an index 
 $k\geq 1$  such that $A_k(\Phi_s(g))\neq A_k(\Id)$.
The map $s=(s_0,s_1,s_2)\mapsto A_k(\Phi_s(g))-A_k(\Id)$ is a polynomial function in the variables $s_0^{\pm 1}$, $s_1^{\pm1}$, and $s_2^{\pm 1}$
that does not  vanish identically on $\calr$, so its zero set is a closed subset with empty interior.

We now check that $\calr$ has the separation property. As in Section~\ref{sec_Baumslag}, denote by $p:\Gamma_2\ra \F_2=\grp{a_0,a_1,a_2 \; |\; a_0a_1a_2=1}$  the morphism obtained by killing $t_1$ and $t_2$. For the parameter $s=(1,1,1)$, $\Phi_{s}$ is equal to $\rho_0\circ p$. 
More generally,  setting $s_N=(\lambda_0^N,\lambda_1^N,\lambda_2^N)$ for $N\in\N$, the morphism $\Phi_{s_N}:\Gamma_2 \to \Diff(\R,0)$
satisfies 
\begin{align}
\Phi_{s_N}(a_i)& = f_i\text{\quad for $i\in\{0,1,2\}$} \\
\Phi_{s_N}(t_i)&= \phi_i^{u_i^N}\phi_0^{u_0^N}=f_i^Nf_0^N\text{\quad for $i\in\{1,2\}$}. 
\end{align}
This means that $\Phi_{s_N}=\rho_0\circ p\circ \tau^N$ 
where, as in Section \ref{sec_Baumslag}, $\tau:\Gamma_2\ra \Gamma_2$ is the Dehn twist along the three curves $a_i$.
By Proposition \ref{pro_twist3bords}, for all $g\in\Gamma_2\setminus\{1\}$ there exists $N\in\N$ such that
$p\circ \tau^N(g)\neq 1$.
Since $\rho_0$ is injective, this implies that $\Phi_{s_N}(g)\neq 1$ which shows that $\calr$ has the separation property.

\section{Non-orientable surface groups}\label{sec_nonorientable}

\begin{thm}\label{thm_Ng}
Let $N_g$ be the fundamental group of a closed non-orientable surface of genus $g\geq 4$.
  There exists an injective morphism $N_g\ra \Diff(\R,0)$.
\end{thm}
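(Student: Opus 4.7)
The plan is to follow the strategy of Theorem~\ref{thm_g2}: construct a family of representations $\Phi_s : N_g \to \Diff(\R, 0)$ parameterized by $s$ in a Baire space $\calr$, whose discrete values realize an asymptotically separating sequence of morphisms, and then invoke Lemma~\ref{lem_strategie}. One new feature must be addressed in passing from orientable to non-orientable surfaces: the abelianization $N_g^{ab} \cong \Z^{g-1} \oplus \Z/2$ has $2$-torsion generated by the class of $a_1 \cdots a_g$, so $N_g$ admits no injective morphism to a torsion-free group such as $\F_m$. The free-group target of the orientable case must therefore be enlarged. Fortunately, $\Diff(\R, 0)$ contains the order-$2$ germ $z \mapsto -z$, which will play the role of the torsion element.

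For $g \geq 4$, I would exhibit $N_g$ as a connected sum $\Sigma_h \# N_k$ with $h \geq 1$ and $k \geq 2$ (here $g = 2h + k$), yielding the presentation
\begin{equation}\label{eq:Ng-plan}
N_g = \langle\, a_1, b_1, \ldots, a_h, b_h, c_1, \ldots, c_k \ |\ [a_1, b_1] \cdots [a_h, b_h] \cdot c_1^2 \cdots c_k^2 = 1 \,\rangle,
\end{equation}
which exhibits $N_g$ as an amalgamated free product of $\F_{2h} = \langle a_i, b_i \rangle$ and $\F_k = \langle c_j \rangle$ over the infinite cyclic group generated by the separating curve $\gamma = [a_1, b_1] \cdots [a_h, b_h] = (c_1^2 \cdots c_k^2)^{-1}$. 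Both amalgamation factors are free of rank $\geq 2$, which is essential for the Baumslag-Lemma-based analysis to follow. After inserting stable letters for Dehn twists along several two-sided non-separating simple closed curves (as in the three-twist construction of Section~\ref{sec_orientable}), I would define a twist automorphism $\tau : N_g \to N_g$ and a \emph{kill-the-twist} morphism $p : N_g \to H$ targeting a group $H = \F_m * \Z/2$ (a free product combining free-group structure with the needed $2$-torsion).

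Next, I would establish the analog of Proposition~\ref{pro_twist3bords}, asserting that $p \circ \tau^N$ is asymptotically injective on $N_g$. The argument mirrors the orientable case: an arbitrary element of $N_g$ is written in amalgamation normal form, $\tau^N$ inserts large powers of the twist generators between the blocks, and Baumslag's Lemma~\ref{lem:BL} is applied via the action of $\F_m$ on its Gromov boundary, as in the second proof of Proposition~\ref{pro_twist3bords}. The $\Z/2$ factor of $H$ serves precisely to separate the $2$-torsion class of $N_g^{ab}$. Combining Theorem~\ref{thm_F2} (embedding $\F_m$ in $\Diff(\R, 0)$ with prescribed hyperbolic derivatives) with the germ $z \mapsto -z$ (realizing $\Z/2$ in $\Diff(\R, 0)$) yields an embedding $\rho_0 : H \hookrightarrow \Diff(\R, 0)$. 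Koenigs linearization (Theorem~\ref{thm:Koenigs}) extends each hyperbolic generator of $\rho_0$ to a multiplicative flow, and interpolating the discrete sequence $\rho_0 \circ p \circ \tau^N$ produces a polynomial family $\Phi_s : N_g \to \Diff(\R, 0)$ indexed by $s \in \calr = (\R_+^*)^m$. Lemma~\ref{lem_strategie} then yields a dense $G_\delta$ of injective $\Phi_s$: Baire is automatic, the separation property follows from the asymptotic injectivity of $p \circ \tau^N$, and irreducibility follows from the polynomial dependence of $A_k(\Phi_s(g))$ on $s$ via Lemma~\ref{coefficientswordmap}.

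The main obstacle will be constructing the twist automorphism $\tau$ and the separating morphism $p$ so that $p \circ \tau^N$ is asymptotically injective while correctly handling the $\Z/2$ torsion. The hypothesis $g \geq 4$ is essential here: for $g = 3$, the relator $[a, b] c^2 = 1$ forces every commutator in a morphic image to be a square, obstructing the separation argument; for $g \leq 2$, $N_g$ is either finite or virtually abelian, so the question is trivial or fails for elementary reasons.
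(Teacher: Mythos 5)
Your high-level strategy (Baire space of flow-deformed representations, Koenigs linearization, Dehn twists, Baumslag's Lemma) is the right one and matches the paper's, but the proposal rests on a misconception that derails it. You claim that because $N_g^{ab}\cong \Z^{g-1}\oplus\Z/2$ has $2$-torsion, ``the free-group target of the orientable case must be enlarged'' to $H=\F_m*\Z/2$. This is not so. The method never requires an \emph{injective} morphism to the target (neither $\Gamma_2$ nor $N_g$ embeds in any free group), only an asymptotically injective sequence $p\circ\tau^N$, and for $g\geq 4$ the group $N_g$ is fully residually \emph{free}: the $2$-torsion of the abelianization merely forces the image of $c_1\cdots c_g$ to land in the commutator subgroup of the free target, not to die. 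The paper exploits exactly this: for even genus it reduces to $N_4=\langle a_1,a_2,b_1,b_2\mid a_1^2a_2^2b_2^2b_1^2\rangle$ and uses the retraction $p\colon N_4\to\grp{a_1,a_2}$, $b_i\mapsto a_i\m$, twisted along the single curve $\gamma=(a_1^2a_2^2)^{-1}$; for odd genus it writes $N_{2k+1}$ as a double amalgam of free groups and uses two commuting twists and a map $p$ onto $\F_k$. Your detour through $\F_m*\Z/2$ creates genuine new obligations that you do not discharge: Theorem~\ref{thm_F2} produces a free group, not a free product with an involution (adjoining $z\mapsto -z$ to a free subgroup of $\Diff(\R,0)$ need not generate their free product, and $z\mapsto -z$ is not hyperbolic, so it cannot be handled by Koenigs flows); and Baumslag's Lemma~\ref{lem:BL} is stated for free groups, so applying it to words in $\F_m*\Z/2$ requires a separate ping-pong argument you do not supply.

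The second, independent gap is that the heart of the proof --- the explicit twist automorphism $\tau$, the morphism $p$, and the verification that $p\circ\tau^N$ is asymptotically injective (the analogues of Lemma~4.3 and Lemma~4.5 in the paper, proved by putting $g$ in amalgam normal form, checking the non-commutation hypotheses block by block, and invoking Lemma~\ref{lem:BL}) --- is exactly what you defer as ``the main obstacle.'' Without it there is no separation property to feed into Lemma~\ref{lem_strategie}, so the argument is not complete. As a minor point, your closing remark that the embedding ``fails for elementary reasons'' when $g\leq 2$ is also wrong: $N_1=\Z/2$ embeds via $z\mapsto -z$, and the Klein bottle group $N_2$ embeds by taking $a(z)=-z$ and $b$ a parabolic germ, since then $aba\m=b\m$.
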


\begin{rem}
  The fundamental group $N_3$ of the
  non-orientable surface of genus
  $3$ is not fully residually free,
  and our methods do not apply to
  this group. (See~\cite{Lyndon:1959}, Proposition 9.)
\end{rem}
 
\subsection{Even genus}

We first treat the case of an even genus $g\geq 4$. 
In this case, the group $N_{g}$ embeds in $N_4$. Indeed, the non-orientable surface of genus $4$ is 
the connected sum of a torus $\R^2/\Z^2$ with two projective planes $\P^2(\R)$. Taking a cyclic cover 
of the torus of degree $k$, we get a surface homeomorphic to the connected sum of $\R^2/\Z^2$ with $2k$ copies of
$\P^2(\R)$, hence a non-orientable surface of genus $2(k+1)$.  Thus, it suffices to prove that $N_4$ embeds in $\Diff(\R,0)$.

The non-orientable surface of genus $4$ is homeomorphic to the connected sum of $4$ copies of $\P^2(\R)$, and this gives the presentation (see Figure~\ref{fig:N4})
\begin{equation}
N_4=\langle a_1,a_2, b_1, b_2 \; |\;  a_1^2 a_2^2 b_2^2 b_1^2 =1 \rangle.
\end{equation}

Let $p:N_4\ra \grp{a_1,a_2}$ be the morphism fixing $a_1,a_2$ and sending $b_1$ and $b_2$ to
$a_1\m$ and $a_2\m$ respectively.
Let $\tau:N_4\ra N_4$ be the Dehn twist around the curve $\gamma=(a_1^2 a_2^2)^{-1}$, i.e.\ the automorphism that fixes $a_1$ and $a_2$
and sends $ b_1$ and $b_2$ to $\gamma b_1 \gamma\m$ and  $\gamma b_2 \gamma\m$ respectively.

\begin{figure}[h] 
\centering\epsfig{figure=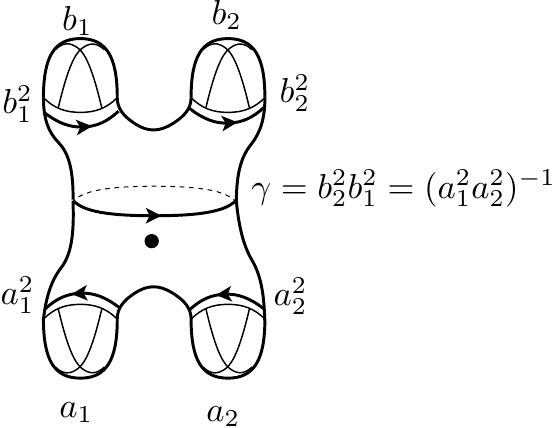}
\caption{{\small{The fundamental group $N_4$.-- The base point is represented by $\bullet$, the $4$ generators are $a_1$, $a_2$, $b_1$, $b_2$, and the curve $\gamma$ is used to construct the Dehn twist $\tau$. }}}\label{fig:N4}
\end{figure}

\begin{lem}\label{lem_p2}
Given any $g\in N_4\setminus\{1\}$, there exists $n_0\in\N$ such that for all $N\geq n_0$,
$p\circ\tau^N(g)\neq 1$.
\end{lem}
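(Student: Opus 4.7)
The plan is to follow the strategy of Proposition~\ref{pro_twist3bords}, using this time the amalgamated product decomposition of $N_4$ along the separating curve $\gamma$. Cutting $N_4$ along $\gamma$ yields two one-holed Klein bottles, each with free fundamental group of rank two, so that $N_4 = A *_C B$ with $A = \grp{a_1, a_2}$, $B = \grp{b_1, b_2}$, and $C = \grp{\gamma}$, where $\gamma = (a_1^2 a_2^2)^{-1}$ on the $A$-side and $\gamma = b_2^2 b_1^2$ on the $B$-side. In this description, $p$ is the identity on $A$ and restricts to the isomorphism $b_i \mapsto a_i^{-1}$ on $B$, while $\tau$ fixes $A$ pointwise and acts on $B$ by conjugation: $\tau^N(h) = \gamma^N h \gamma^{-N}$ for every $h \in B$.

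First I would check that the cyclically reduced words $a_2^{-2} a_1^{-2}$ and $b_2^2 b_1^2$ are not proper powers, so that $\gamma$ is primitive in both $A$ and $B$; consequently $c := p(\gamma) = a_2^{-2} a_1^{-2}$ is primitive in $\F_2$ and its centralizer there equals $\grp{c}$.

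Next, given $g \in N_4 \setminus \{1\}$, I would write it in a normal form for the amalgamated product,
\[
g = u_0 \, v_1 \, u_1 \, v_2 \, u_2 \cdots v_n \, u_n,
\]
with $u_i \in A$, $v_i \in B$, each intermediate factor outside $C$, and possibly $u_0$ or $u_n$ trivial (inserting a trivial $A$-factor at the endpoints if the natural normal form starts or ends with a $B$-factor). If $n = 0$, then $g \in A \setminus \{1\}$ is fixed both by $\tau$ and by $p$, so $p \circ \tau^N(g) = g \neq 1$. For $n \ge 1$, the rule $\tau^N(h) = \gamma^N h \gamma^{-N}$ on $B$ gives
\[
p \circ \tau^N(g) = u_0 \cdot c^N p(v_1) c^{-N} \cdot u_1 \cdot c^N p(v_2) c^{-N} \cdot u_2 \cdots c^N p(v_n) c^{-N} \cdot u_n
\]
in $\F_2 = \grp{a_1, a_2}$, a word of exactly the shape required by Baumslag's Lemma~\ref{lem:BL}.

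Finally I would check the noncommutativity hypothesis: since $c$ is primitive, for any $\ell \in \F_2$ the element $\ell^{-1} c^{\pm 1} \ell$ commutes with $c^{\mp 1}$ if and only if $\ell \in \grp{c}$. The spacers $u_k$ (for $1 \le k \le n-1$) lie outside $\grp{c} = \grp{\gamma}$ in $A$ by the normal form assumption, and the spacers $p(v_k)$ (for $1 \le k \le n$) lie outside $\grp{c}$ because $v_k \notin \grp{\gamma}$ and $p|_B$ sends $\grp{\gamma}$ isomorphically onto $\grp{c}$. Baumslag's lemma then yields $p \circ \tau^N(g) \neq 1$ for all $N$ large enough. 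The main obstacle I anticipate is really just the bookkeeping of the normal form in $A *_C B$ together with the primitiveness checks; once these are in place, the argument is essentially a transcription of the orientable case.
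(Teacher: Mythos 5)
Your argument is correct and is exactly the proof the paper has in mind: the paper leaves Lemma~\ref{lem_p2} as an exercise, citing the analogy with \cite[Corollary~2.2]{BGSS}, and its detailed treatment of the odd-genus case (Lemma~\ref{lem_p3}) follows the same pattern you use — normal form in the amalgam along the separating curve, primitivity of the amalgamated element, injectivity of $p$ on each vertex group, then Baumslag's Lemma. All the checks you flag (that $a_2^{-2}a_1^{-2}$ and $b_2^2b_1^2$ are not proper powers, and that $p|_B$ carries $\grp{\gamma}$ isomorphically onto $\grp{c}$) go through, so the proof is complete.
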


\begin{proof}[For the proof] The proof of this statement is completely analogous to the proof of \cite[Corollary~2.2]{BGSS}, using Baumslag Lemma,
we leave it as an exercise to the reader. See also \cite[Proposition 4.13]{CG}. \end{proof}

Using Theorem \ref{thm_F2}, we fix two germs of diffeomorphisms $f_1$ and $f_2\in\Diff(\R,0)$ generating a free group and satisfying
$f'_1(0)>1$ and $f'_2(0)>1$.
We denote by 
\begin{equation}
\rho_0:\F_2=\grp{a_1,a_2}\ra \Diff(\R,0)
\end{equation} the injective morphism sending $a_i$ to $f_i$ for $i\in\{1,2\}$.
In particular, 
\begin{equation}
\rho_0(\gamma)=(f_1^2\circ f_2^2)^{-1}
\end{equation}
 is a hyperbolic germ: its derivative $\lambda=((f_1^2\circ f_2^2)'(0))^{-1}$ is $<1$.
The Koenigs linearization theorem gives an element $h\in\Diff(\R,0)$ such that $\rho_0(\gamma)=h\circ m_\lambda \circ h\m$.
Consider the multiplicative flow $\phi:\R_+^*\ra \Diff(\R,0)$ defined by $\phi^s=g\circ m_s \circ g\m$.
As above, $\phi^\lambda=\rho_0(\gamma)$, $\phi^s$ commutes with $\rho_0(\gamma)$ for all $s>0$, and 
$s\mapsto \phi^s$ is a polynomial map: for all $k\in\N$,
$s\mapsto A_k(\phi^s)$ is a polynomial in the variables $s$ and~$s\m$.

Set $\calr=\R_+^*$. Given $s\in\R_+^*$, consider the morphism $\rho_s:N_4\to \Diff(\R,0)$ defined by
\begin{align*}
a_1&\mapsto f_1 & a_2&\mapsto f_2\\
b_1&\mapsto \phi^{s}f_1\m \phi^{-s} & b_2&\mapsto \phi^{s}f_2\m \phi^{-s}.
\end{align*}
This gives a well defined homomorphism because $\phi^s$ commutes with $f_1^2f_2^2$.

We now check the three assumptions of Lemma \ref{lem_strategie}.
Clearly, $\calr$ is a Baire space.
The irreducibility is a consequence of the fact that for any $g\in N_4$, and any $k\in\N$ the map $s\mapsto A_k(\phi_s(g))$
is a polynomial function in the variables $s^{\pm1}$.
The separation property follows from Lemma \ref{lem_p2}
together with the fact that $\rho_{\lambda^N}=\rho_0\circ p\circ \tau^N$ and that $\rho_0$ is injective.

\subsection{Odd genus}

We now treat the case of a non-orientable surface of odd genus $g=2k+1$, $k\geq 2$.
One can write $N_{2k+1}$ as (see Figure~\ref{fig:Nodd} below) 
\begin{equation}
N_{2k+1}=\grp{a_1,\dots,a_k,c,b_1,\dots,b_k\ |\ a_1^2\dots a_k^2c^2b_k^2\dots b_1^2=1}.
\end{equation}
This group splits as a double amalgam of free groups
\begin{equation}\label{eq:dble-amalgam}
N_{2k+1}=\grp{a_1,\dots a_k}\underset{a_1^2\dots a_k^2=\gamma\m}{*}\grp{\gamma,c}\underset{c^{-2}\gamma=b_k^2\dots b_1^2}{*} \grp{b_1,\dots,b_k}.
\end{equation}
We shall use the following notation to refer to this amalgam structure:
\begin{itemize}
\item $A_1=\grp{a_1,\dots a_k}$ and $e_{1,2}=(a_1^2\dots a_k^2)^{-1}$;
\item $A_2=\grp{\gamma,c}$ and $e_{2,1}=\gamma$ and $e_{2,3}=c^{-2}\gamma=\delta$; 
\item $A_3=\grp{b_1,\dots,b_k}$ and $e_{3,2}=b_k^2\dots b_1^2$.
\end{itemize}
So, each of the $A_i$ is a free group and the amalgamation is given by $e_{1,2}=e_{2,1}$
and $e_{2,3}=e_{3,2}$.

Define a morphism $p\colon  N_{2k+1}\ra \grp{a_1,\dots,a_k}\simeq \F_k$  by 
\begin{align*}
a_i&\mapsto a_i \text{\quad for $i\leq k$} & c&\mapsto a_k^{-2}\\
b_i&\mapsto a_i\m \text{\quad for $i\leq k-1$} &b_k&\mapsto a_k 
\end{align*}
(the structure of almagam shows that $p$ is well defined).

\begin{figure}[h]
\centering\epsfig{figure=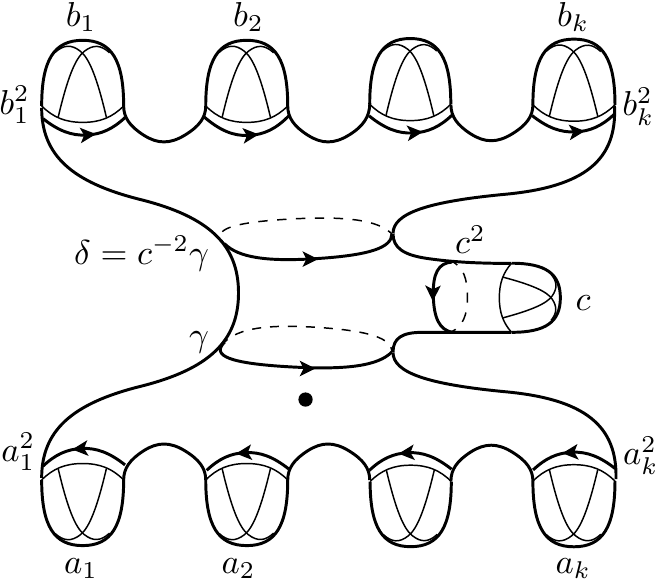}
\caption{{\small{The fundamental group $N_{2k+1}$.}}}\label{fig:Nodd}
\end{figure}

\begin{lem}
The morphism $p\colon N_{2k+1}\ra   \F_k$ is injective in restriction to each of the three subgroups of the amalgam~\eqref{eq:dble-amalgam}.
\end{lem}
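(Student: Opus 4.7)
The plan is to treat the three subgroups $A_1$, $A_2$, $A_3$ separately. The cases of $A_1$ and $A_3$ are immediate: $p|_{A_1}$ is the identity on $A_1=\langle a_1,\ldots,a_k\rangle$, and on $A_3=\langle b_1,\ldots,b_k\rangle$ the map $p$ sends the free basis $\{b_1,\ldots,b_k\}$ to the set $\{a_1^{-1},\ldots,a_{k-1}^{-1},a_k\}$, which is again a free basis of $\F_k$. So $p|_{A_3}$ is even an isomorphism onto $\F_k$, and in both cases injectivity is automatic.

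The substantive case is $A_2=\langle \gamma,c\rangle$. I would first invoke the double amalgam decomposition~\eqref{eq:dble-amalgam}: vertex groups of a graph of groups embed in its fundamental group, and since $A_2$ is by construction the free rank-$2$ vertex group on the generators $\gamma$ and $c$, the subgroup $A_2\subset N_{2k+1}$ is indeed free of rank $2$ on $\{\gamma,c\}$. Once this is recorded, injectivity of $p|_{A_2}$ reduces to showing that the images
\[
u:=p(\gamma)=a_k^{-2}a_{k-1}^{-2}\cdots a_1^{-2}, \qquad v:=p(c)=a_k^{-2}
\]
generate a free subgroup of rank $2$ in $\F_k$: any generating set of size $2$ for a free group of rank $2$ is automatically a free basis by the Hopfian property of $\F_2$, so $p|_{A_2}$ will then be a homomorphism between free groups of rank $2$ sending a basis to a basis, hence an isomorphism onto $\langle u,v\rangle$.

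To produce free rank $2$ I will use the classical criterion that two non-trivial elements of a free group generate a rank-$2$ free subgroup if and only if they do not commute, equivalently if and only if they do not share a common maximal root. The element $v=a_k^{-2}$ has maximal root $a_k^{-1}$, whereas $u$ is cyclically reduced and genuinely involves the letter $a_1^{-1}$ because $k\geq 2$, so $u$ cannot be a power of $a_k^{-1}$. Hence $u$ and $v$ have distinct maximal roots, they do not commute, and $\langle u,v\rangle$ is free of rank $2$. I do not expect any genuine obstacle: every step is a routine fact from combinatorial group theory, the only one deserving a careful word being the identification of $A_2$ as a free rank-$2$ subgroup of $N_{2k+1}$, which is the single place where the amalgam structure is really used.
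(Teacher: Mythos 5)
Your proposal is correct and follows essentially the same route as the paper: both arguments reduce the only nontrivial case to showing that $p(A_2)=\langle p(\gamma),p(c)\rangle$ is a non-abelian (hence free of rank $2$) subgroup of $\F_k$ and then conclude by Hopficity of $\F_2$. Your explicit verification that $p(\gamma)$ and $p(c)=a_k^{-2}$ do not commute, and your remark that the amalgam structure is what identifies $A_2$ as free of rank $2$ inside $N_{2k+1}$, just make explicit what the paper leaves implicit.
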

\begin{proof} By construction, it is injective in restriction to $\grp{a_1,\dots , a_k}$ and in restriction to $\grp{b_1,\dots , b_k}$.
Then, note that $p(\grp{\gamma,c })=\grp{a_1^2\dots a_k^2,a_k^{-2}}$ is isomorphic to $\F_2$ because it is a non-abelian subgroup of a free group.
Since $\F_2$ is Hopfian, $p$ is necessarily injective in retriction to $\grp{\gamma,c}$.
\end{proof}

Consider $\delta=b_k^2\cdots b_1^2=c^{-2}\gamma$ and note that $p(\delta)=a_k^{2}a_{k-1}^{-2}\cdots a_1^{-2}$.
Let $\tau$ be the Dehn twist corresponding to the decomposition above, i.e.\ the automorphism fixing $a_i$, sending $c$ to $\gamma c \gamma\m$
and sending  $b_i$ to $(\gamma\delta) b_i (\gamma\delta)\m$. Since $\tau$ is the composition of the twists
given by $\gamma$ and $\delta$ and these two twists commute we get 
\[
\tau^N(b)=(\gamma^N\delta^N) b (\gamma^N\delta^N)^{-1}, \quad \forall b\in A_3.
\]
In this situation, one can prove the following lemma in a similar way to Proposition \ref{pro_twist3bords}.

\begin{lem}\label{lem_p3}
Given any  $g\in N_{2g+1}\setminus\{1\}$, there exists $n_0\in\N$ such that for all $N\geq n_0$,
$p\circ\tau^N(g)\neq 1$.
\end{lem}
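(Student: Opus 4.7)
The proof will closely parallel Proposition~\ref{pro_twist3bords}, exploiting the double amalgam structure~\eqref{eq:dble-amalgam}. My plan is to write any non-trivial $g \in N_{2k+1}$ in normal form for this amalgam, apply $p \circ \tau^N$, and put the result into the shape required by Baumslag's Lemma~\ref{lem:BL}. First I would write $g = x_0 x_1 \cdots x_n$ with $x_j \in A_{i_j}$, the sequence $(i_j)$ being a reduced walk in the path graph $A_1 - A_2 - A_3$: consecutive indices are adjacent and distinct, and whenever $i_{j-1} = i_{j+1}$ the interior letter $x_j$ lies outside the corresponding edge subgroup ($\langle\gamma\rangle$ or $\langle\delta\rangle$). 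By absorbing a leading or trailing letter that lies in an edge subgroup into its neighbour (possible since edge subgroups sit in the intersection of adjacent vertex groups), I may furthermore assume that $x_0$ and $x_n$ satisfy the analogous condition whenever it is needed. Using $\tau^N|_{A_1}=\mathrm{id}$, $\tau^N|_{A_2}=\mathrm{ad}(\gamma^N)$ and $\tau^N|_{A_3}=\mathrm{ad}(\gamma^N\delta^N)$, and setting $\Gamma = p(\gamma)$, $\Delta = p(\delta)$, $y_j = p(x_j)\in\F_k$, the inner conjugation prefixes telescope to
\[
p \circ \tau^N(g) = P_0 \cdot y_0 \cdot \eta_1^N \cdot y_1 \cdot \eta_2^N \cdots \eta_n^N \cdot y_n \cdot P_n^{-1},
\]
where $P_0, P_n \in \{1,\Gamma^N,\Gamma^N\Delta^N\}$ are determined by $i_0$ and $i_n$, and each $\eta_\ell\in\{\Gamma^{\pm 1},\Delta^{\pm 1}\}$ is $\Gamma^{\pm 1}$ when the transition $i_{\ell-1}\to i_\ell$ crosses the edge $\{A_1,A_2\}$ and $\Delta^{\pm 1}$ when it crosses $\{A_2,A_3\}$.

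Inserting trivial letters where necessary, I would rewrite this product in Baumslag form $h_0 c_1^N h_1 c_2^N \cdots c_m^N h_m$, each $c_\ell\in\{\Gamma,\Delta\}$ and $h_\ell\in\F_k$, and then verify the Baumslag hypothesis that $h_\ell\m c_\ell h_\ell$ does not commute with $c_{\ell+1}$. The key observations in $\F_k$ are: $\Gamma$ and $\Delta$ are primitive (not proper powers) and pairwise non-conjugate; hence $\langle\Gamma\rangle$ and $\langle\Delta\rangle$ are self-centralizing, and no conjugate of $\Gamma$ meets $\langle\Delta\rangle$. Consequently, whenever $c_\ell\ne c_{\ell+1}$ the hypothesis is automatic; whenever $c_\ell=c_{\ell+1}=\Gamma$, the intermediate letter $y_j$ lies in some $A_s$ with $s\in\{1,2\}$, and reducedness of the normal form together with injectivity of $p|_{A_s}$ force $y_j\notin\langle\Gamma\rangle$ (since $p(\langle\gamma\rangle)=\langle\Gamma\rangle$ inside $p(A_s)$); the case $c_\ell=c_{\ell+1}=\Delta$ is symmetric. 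Applying Baumslag's Lemma then yields $p\circ\tau^N(g)\ne 1$ for all $N$ large enough, as required.

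The main obstacle I anticipate is the bookkeeping at the boundary: when $P_0$ or $P_n^{-1}$ is non-trivial, the Baumslag product starts (or ends) with a pure power $\Gamma^{\pm N}$ or $\Delta^{\pm N}$, and one must verify the non-commutation condition at the joint with $y_0$ (or $y_n$). This is precisely what the preliminary adjustment of the normal form (absorbing a leading or trailing edge-stabilizer letter into its neighbour) is designed to guarantee; a routine case analysis on the extremal pairs $(i_0,i_1)$ and $(i_{n-1},i_n)$, combined with the primitivity and non-conjugacy of $\Gamma$ and $\Delta$, completes the verification.
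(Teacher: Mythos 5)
Your proposal is correct and follows essentially the same route as the paper: a reduced normal form for the double amalgam, telescoping the conjugating prefixes of $\tau^N$, and an application of Baumslag's Lemma, with the non-commutation hypotheses supplied by the reducedness condition together with primitivity and non-conjugacy of $\gamma$ and $\delta$ (the paper checks these inside the vertex groups and pushes forward by the injectivity of $p$ on each $A_i$, you check them directly on the images in $\F_k$ -- both work). The only real difference is that the paper pads the normal form with trivial letters so that it begins and ends in $A_1$, which is fixed by $\tau$; this eliminates the boundary powers $P_0$, $P_n$ that you correctly single out as the main remaining bookkeeping.
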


 \begin{proof}  
  Write $g$ as a word in the graph of groups, i.e.\
 $g=s_0\dots s_n$ with $s_k\in A_{r_k}$ (we allow $s_k=1$) for some $r_k\in\{1,2,3\}$,
 with $r_{k+1}=r_k\pm 1$, and $r_0=r_n=1$. We take this word of minimal possible length among words satisfying these contraints.
 If $k$ is such that $r_{k-1}=r_{k+1}$, then $s_k\notin \grp{e_{r_k, r_{k+1}}}$ since otherwise, one could shorten the word using the
 structure of amalgam  (in particular $s_k\neq 1$ in this case).
 Now one easily checks that
 \begin{equation}
 \tau^N(g)=s_0 d_{1}^{\eps_1 N}s_1 d_{2}^{\eps_2 N} s_2 \cdots d_{n}^{\eps_{n} N} s_n 
 \end{equation}
 where $d_{k}=e_{r_{k-1},r_k}\in \{\gamma,\delta\}$, and $\eps_k= r_{k}-r_{k-1}\in\{\pm 1\}$.
 
 We claim that
 $s_k\m d_{k} s_k$ does not commute with $d_{k+1}$.
 If $d_{k}\neq d_{k+1}$, this follows from the fact that $\gamma$ commutes with no conjugate of $\delta$ in $A_2=\grp{c,\delta}$.
 If $d_{k}=d_{k+1}$, then $r_{k-1}=r_{k+1}$, so $s_k\notin \grp{e_{r_k, r_{k+1}}}=\grp{d_k}$.
 If $[s_k\m d_ks_k,d_k]=1$, then $s_k$ preserves the axis of $d_k$ in the Cayley graph of the free group $A_{r_k}$, so $s_k$ is a power of $d_k$,
 because $d_k\in \{ \gamma, \delta\}$ is not a proper power; this contradicts that $s_k\notin \grp{d_k}$.

 Denote by $\ol s_k$, $\ol d_k\in \F_r$ the images of  $s_k$, $d_k$ under $p$.
 Since $p$ is injective on each $A_{r_k}$,
  $\ol s_k\m \ol d_{k} \ol s_k$ does not commute with $\ol s_{k+1}$,
 so the hypotheses of Baumslag Lemma apply to the word
 \begin{equation}
 p\circ\tau^N(g)=\ol d_{0}^{\eps_0 N}\ol s_1 \ol d_{1}^{\eps_1 N} \ol s_2 \dots \ol d_{n-1}^{\eps_{n-1} N} \ol s_n \ol d_{n}^{\eps_n N}
 \end{equation}
 so $p\circ\tau^N(g)\neq 1$ for $N$ large enough.
 \end{proof}

Now consider $k$ elements $f_1$, $\dots$, $f_k$ of $\Diff(\R,0)$ generating a free group of rank $k$ with $f'_i(0)>1$ for all $i\in\{1,\dots,k\}$,
and $f'_k(0)<f'_1(0)$. Such a set can be obtained from two generators $g_1$ and $g_2$ of a free group of rank $2$ with $g_i'(0)>1$, as in Theorem \ref{thm_F2}, by taking $f_i=g_1^i\circ g_2^2\circ g_1^{-i}$
for $i<k$ and $f_k=g_1^k\circ g_2\circ g_1^{-k}$.
Let $\rho_0:\F_k=\grp{a_1,\dots, a_k}\ra \Diff(\R,0)$ be the injective morphism sending $a_i$ to $f_i$ for $i\leq k$.
In particular, $\rho_0(\gamma)=(f_1^2\circ \dots \circ f_k^2)\m$ and $\rho_0(p(\delta))=f_k^2 \circ f_{k-1}^{-2}\circ\dots\circ f_1^{-2}$ are hyperbolic.
Using Koenigs linearization theorem as above, there exists two multiplicative flows
$\phi$ and $\psi:\R_+^*\ra \Diff(\R,0)$ 
 and a pair of positive real numbers $\lambda$ and $\mu$ such that (1) $\phi^\lambda=\rho_0(\gamma)$ and $\psi^\mu=\rho_0(p(\delta))$,
and (2) $s\mapsto \phi^s$ and $s\mapsto \psi^s$ are polynomial mappings.

Set $\calr=(\R_+^*)^2$ and, for every $(s,s')\in\calr$, define 
a morphism $\rho_{s,s'}:\N_{2k+1}\ra \Diff(\R,0)$ by
\begin{align*}
a_i&\mapsto f_i \text{\quad for $i\leq k$} & c&\mapsto \phi^s f_k^{-2} \phi^{-s} \\
b_i&\mapsto \phi^s\psi^{s'}f_i\m (\phi^s\psi^{s'})\m \text{\quad for $i\leq k-1$} & b_k&\mapsto \phi^s\psi^{s'}f_k (\phi^s\psi^{s'})\m 
\end{align*}
(it is well defined because $\phi^s$ and $\psi^{s'}$ commute with 
$\rho_0(\gamma)=(f_1^2\circ \dots \circ f_k^2)\m$ and $\rho_0(p(\delta))=f_k^2 \circ f_{k-1}^{-2}\circ\dots\circ f_1^{-2}$ respectively).

The assumptions of Lemma \ref{lem_strategie} hold:
$\calr$ is a Baire space, and the irreducibility follows from the fact that the maps $s\mapsto \phi_s$ and $s'\mapsto \phi_{s'}$
are polynomials in the variables $s^{\pm1},s'^{\pm 1}$.
The separation property follows from Lemma \ref{lem_p3}
together with the fact that $\rho_{\lambda^N,\mu^N}=\rho_0\circ p\circ \tau^N$, and that $\rho_0$ is injective.

\subsection{Embeddings in $\Diff(\bfk,0)$}\label{par:embeddings-other-fields-dense}
The proofs just given  provide the following statement.

\begin{named}{Theorem B}
Let $(\bfk, \vert \cdot \vert)$ be a non-discrete and complete valued field.
\begin{itemize}
\item[(1)] Let $\Gamma$ be the fundamental group of a closed orientable surface, or a closed non-orientable surface
of genus $\geq 4$. 
 Then, there is an embedding of $\Gamma$  into $\Diff(\bfk,0)$.
 
\item[(2)] Let $F\subset \Diff(\bfk,0)$ be a free group of rank $2$, generated by two germs $f$ and $g$ with 
$\vert f'(0)\vert >1$ and $\vert g'(0)\vert > 1$. Then, there is an embedding of $\Gamma_2$, the fundamental
group of a closed, orientable surface of genus $2$, into $\Diff(\bfk,0)$ whose image contains $F$.
\end{itemize}
\end{named}

\begin{proof}
For the first assertion, we just have to replace $\R$ by $\bfk$ in the proofs of Theorem~\ref{thm_g2} and~\ref{thm_Ng}. The parameter space is $\calr = (\bfk^*)^3$ or $\bfk^*$ or $(\bfk^*)^2$, and it is
a Baire space because $(\bfk, \vert \cdot \vert)$ is complete.

For
the second assertion, we start with a representation $\rho_0$ 
in Equation~\eqref{eq_rho0} whose image is equal to $F$.  Remark~\ref{rem_g2} shows that all the injective 
morphisms $\Phi_s$ that we get satisfy also $\Phi_s(\Gamma_2)\supset F$.
\end{proof}


\medskip

\begin{center}
{\bf{ -- Part II. --}} 
\end{center}

\section{The final topology on germs of diffeomorphisms}\label{sec-appendix}

Let $(\bfk, \vert \cdot \vert)$ be a complete field. 
This section introduces a new topology on $\bfk\{z\}$ and $\Diff(\bfk,0)$, which will be used in our second proof of Theorem~A. 
The reader may very well skip this section on a first reading.


\subsection{The final topology over the complex numbers.}\label{par:finalC}

Until Section \ref{sec_topo_finale_generale}, we focus on the case $\bfk=\C$. 
Let $r$ be a positive real number. Consider the subalgebra $\DA_r$ of $\C\{z\}$ consisting of those  power series $f(z)=\sum_n a_n z^n$ which converge on the open unit disk $\disk_r$ (i.e. $\rad(f)\geq r$) and extend continuously to the closed unit disk $\overline{\disk}_r$. When endowed with the norm 
\begin{equation}
\norm{f}_{\DA_r}=\max_{z\in \overline{\disk}_r}\vert f(z)\vert,
\end{equation}
$\DA_r$ is a Banach algebra. 
If $s<r$, the restriction of functions $f\in \DA_r$  to the smaller disk ${\overline{\disk}}_s$ determines a $1$-Lipschitz embedding $\DA_r\to \DA_s$.  

The space $\C\{z\}$ is the union of the   algebras $\DA_r$ and can be thus endowed with the final topology associated to the colimit
\begin{equation}\label{eq-colimit}
\C\{z\}=\varinjlim\DA_r.
\end{equation}
This means that a subset $\U\subset\C\{z\}$ is open if its intersection with $\DA_r$ is open for every $r>0$. Equivalently, a map $\varphi\colon \C\{ z\}\to X$ to a topological space is continuous if and only if its composition with the embedding $\DA_r\to \C\{z\}$ is continuous for all $r$.
 Unless we say it explicitly, open sets, neighborhoods, and continuous maps refer, from now on, to this topology.
A word of warning: for $r>s$, the inclusion $\DA_r\to \DA_s$ is not a homeomorphism to its image, and neither is the inclusion $\DA_r\to \C\{z\}$.

The goal of this section is to obtain several basic properties of this topology. For instance, we are going to prove that 
there is a filtration of $\C\{z\}$ by compact subsets $\C_c\{z\}$ so that the continuity can be checked in restriction to each $\C_c\{z\}$.

\begin{rem}\label{rem:Montel-compact} If $s<r$, the homomorphism $\DA_r\to \DA_s$ is compact: 
by Montel theorem, the ball of radius $1$ in $\DA_r$ is mapped into a compact subset $K_1$ of $\DA_s$. 

Let $K\subset \DA_r$ be a bounded subset. Then, the closure $cl_s(K)$ of (the image of) $K$ in $\DA_s$ 
is compact. If $t\leq s$, the image of $cl_s(K)$ in $\DA_t$ is compact, hence closed; this implies that  
$cl_s(K)=cl_t(K)$ in $\C\{z\}$. Thus, the closure $\ol K$ of $K$ in $\C\{z\}$ coincides with the
closure $cl_s(K)$ of $K$ in $\DA_s$ for any $s>r$. As a consequence, $\ol K$ is compact. \end{rem}

We denote by $B_{\DA_r}(\eps)$ the open ball centred at $0$ and of radius $\epsilon$ in $\DA_{r}$, which we also view as a subset of $\C\{z\}$. 

\begin{lem}\label{lem-basis}
A subset $\U$ of $\C\{ z \}$ is a neighborhood of $0$  if and only if there are decreasing sequences $(r_n)$ and $(\epsilon_n)$ tending to $0$ 
such that $\U$ contains the set
$$\mathcal{B}=\bigcup_n\sum_{j\geq 1}^n B_{\DA_{r_j}}(\epsilon_j).$$
\end{lem}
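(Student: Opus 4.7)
I prove the two implications separately, using the Montel-type compactness of the embedding $\DA_r \hookrightarrow \DA_s$ for $s < r$ (Remark~\ref{rem:Montel-compact}) as the essential ingredient for the harder direction.

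\textbf{If direction.} It suffices to show that $\mathcal{B}$ is an open neighborhood of $0$, since then any superset $\U$ is a neighborhood of $0$. I verify that $\mathcal{B} \cap \DA_r$ is open in $\DA_r$ for every $r > 0$. Given $f \in \mathcal{B} \cap \DA_r$, one has $f \in S_n := \sum_{j=1}^n B_{\DA_{r_j}}(\epsilon_j)$ for some $n$; choose $m \geq n$ large enough that $r_{m+1} < r$. The $1$-Lipschitz inclusion $\DA_r \hookrightarrow \DA_{r_{m+1}}$ gives $B_{\DA_r}(\epsilon_{m+1}) \subset B_{\DA_{r_{m+1}}}(\epsilon_{m+1})$, and therefore
\[
f + B_{\DA_r}(\epsilon_{m+1}) \subset S_m + B_{\DA_{r_{m+1}}}(\epsilon_{m+1}) = S_{m+1} \subset \mathcal{B},
\]
exhibiting $\mathcal{B} \cap \DA_r$ as a neighborhood of $f$ in $\DA_r$. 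Since $0 \in \mathcal{B}$, this shows that $\mathcal{B}$ is an open neighborhood of $0$ in $\C\{z\}$.

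\textbf{Only if direction.} Replacing $\U$ by an open subset containing $0$ if necessary, I may assume $\U$ is open, so that $\U \cap \DA_r$ is open in $\DA_r$ for every $r > 0$. I construct $(r_n)$ and $(\epsilon_n)$ inductively, maintaining the slightly stronger invariant that the closed sum $\tilde S_n := \sum_{j=1}^n \ol B_{\DA_{r_j}}(\epsilon_j)$ is contained in $\U$. Start by picking any $r_1 < 1$ and any $\epsilon_1 < 1$ with $\ol B_{\DA_{r_1}}(\epsilon_1) \subset \U$, which is possible because $\U \cap \DA_{r_1}$ is an open neighborhood of $0$ in $\DA_{r_1}$. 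For the inductive step, Remark~\ref{rem:Montel-compact} shows that each closed ball $\ol B_{\DA_{r_j}}(\epsilon_j)$ with $j \leq n$ is compact in $\DA_s$ for every $s < r_n$; since finite Minkowski sums of compact subsets of a topological vector space are compact, $\tilde S_n$ is compact in $\DA_s$ for every $s < r_n$. Choose $r_{n+1} < \min(r_n, 1/(n+1))$; being a compact subset of the open set $\U \cap \DA_{r_{n+1}}$, the set $\tilde S_n$ admits, by the usual finite subcover argument, a uniform $\delta > 0$ with $\tilde S_n + \ol B_{\DA_{r_{n+1}}}(\delta) \subset \U$. Picking $\epsilon_{n+1} < \min(\delta, \epsilon_n, 1/(n+1))$ preserves the invariant $\tilde S_{n+1} \subset \U$. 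In the limit $r_n, \epsilon_n \to 0$, and $\mathcal{B} = \bigcup_n S_n \subset \bigcup_n \tilde S_n \subset \U$.

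\textbf{Main obstacle.} The delicate step is the inductive construction in the hard direction, where from $\tilde S_n \subset \U$ one must extract a uniform $\DA_{r_{n+1}}$-thickening of $\tilde S_n$ inside $\U$. Such a thickening would not exist if one stayed inside $\DA_{r_n}$, where closed balls are never compact (as the space is infinite dimensional); the argument hinges essentially on the Montel-type compactness of the inclusion $\DA_{r_n} \hookrightarrow \DA_{r_{n+1}}$, via which the bounded set $\tilde S_n$ becomes compact once realized in the larger space $\DA_{r_{n+1}}$, so that the open set $\U \cap \DA_{r_{n+1}}$ admits a uniform thickening of $\tilde S_n$ in its $\DA_{r_{n+1}}$-topology.
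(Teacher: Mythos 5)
Your overall strategy is the same as the paper's: the easy direction via openness of the finite sums, and the hard direction via an inductive construction in which Montel compactness of the inclusions $\DA_r\hookrightarrow\DA_s$ supplies a uniform thickening of the partial closed sums. Your ``if'' direction is fine (you in fact prove the slightly stronger statement that $\mathcal B$ itself is open, whereas the paper only checks it is a neighborhood of $0$).

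In the ``only if'' direction there is one recurring imprecision, located exactly at the delicate point. The closure $\ol{B}_{\DA_\rho}(\epsilon)$ is \emph{not} a subset of $\DA_\rho$: by Remark~\ref{rem:Montel-compact} it is compact in $\DA_s$ only for $s<\rho$, and it may contain functions holomorphic and bounded on $\disk_\rho$ that do not extend continuously to $\overline{\disk}_\rho$. Consequently: (i) in your base case, openness of $\U\cap\DA_{r_1}$ in $\DA_{r_1}$ does not by itself give $\ol{B}_{\DA_{r_1}}(\epsilon_1)\subset\U$, since the closure contains points outside $\DA_{r_1}$; and (ii) in the inductive step, compactness of $\tilde S_n$ inside the open set $\U\cap\DA_{r_{n+1}}$ only yields a thickening by the \emph{open} ball of $\DA_{r_{n+1}}$, while $\tilde S_n+\ol{B}_{\DA_{r_{n+1}}}(\delta)$ again escapes $\DA_{r_{n+1}}$, so the containment in $\U$ is not justified at that level. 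Both points are repaired by running the distance-to-the-complement argument one level lower: $\tilde S_n$ is also compact in $\DA_s$ for any $s<r_{n+1}$, the set $\U\cap\DA_s$ is open there, and $\ol{B}_{\DA_{r_{n+1}}}(\delta')$ sits inside the closed $\DA_s$-ball of radius $\delta'$. This is precisely why the paper's induction takes the distance in $\DA_{r_{n+2}}$ while choosing $B_{n+1}$ as a ball of $\DA_{r_{n+1}}$ (and why its base case uses $\U\cap\DA_{r_2}$ to control $\ol{B}_1$ with $B_1\subset\DA_{r_1}$). With that one-line adjustment your argument is complete and coincides with the paper's.
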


Lemma \ref{lem-basis} shows that the topology defined in this section is the same as the topology introduced by Leslie in \cite{Leslie}, except that we consider germs of analytic functions at the origin in $\C$ instead of real analytic functions on a compact analytic manifold. 

\begin{proof}
First we argue that any set $\mathcal{B}$ as in the statement of Lemma \ref{lem-basis} is a neighborhood of $0$ in $\C\{z\}$. To do so we need to check that $\mathcal{B}\cap\DA_r$ contains a neighborhood of $0$ for all $r$. The sum $\sum_{j=1}^n B_{\DA_{r_j}}(\epsilon_j)$ is a subset of $\C\{ z \}$ which is contained in $\DA_{r_n}$. It is open in $\DA_{r_n}$ because one of the summands, namely $B_{\DA_{r_n}}(\epsilon_n)$, is itself open. Now, the continuity of the inclusion $\DA_r\to\DA_{r_n}$ for $r_n<r$ implies that $\sum_{j=1}^n B_{\DA_{r_j}}(\epsilon_j)\cap\DA_r$ is also open in $\DA_r$. Since $\sum_{j=1}^n B_{\DA_{r_j}}(\epsilon_j)\cap\DA_r$ is contained in $\mathcal{B}\cap\DA_r$, the latter is a neighborhood of $0$, as we needed to prove.

Suppose now that $\U$ is a neighborhood of the origin in $\C\{z\}$, and fix a decreasing sequence $(r_n)$ tending to $0$. For each $n\geq 1$, set $\U_n=\U \cap \DA_{r_n}$. 

We first claim that there is a ball $B_1$ in $\DA_{r_1}$ such that $\ol B_1\subset \U$.
Since $\U_2$ is open in $\DA_2$, consider $\eps>0$ such that $B_{r_2}(\eps,0)\subset \U_2$.
Now let $B_1=B_{\DA_{r_1}}(\eps/2,0)$. Then for all $\eta>0$,
$\ol B_1\subset B_1+B_{\DA_{r_2}}(\eta)$ so taking $\eta=\eps/2$, we get $\ol B_1\subset B_{r_2}(\eps/2,0)+B_{r_2}(\eps/2,0)\subset B_{\DA_{r_2}}(\eps)\subset \U$, which proves our claim.

We now construct by induction open balls $B_n\subset \DA_{r_n}$ such that 
for all $n$, $\ol B_1+\dots +\ol B_n\subset \U$.
Given such a set of balls $B_1$, $\dots$, $B_n$, the set $K=\ol B_1+\dots +\ol B_n$ provides a compact subset of $\DA_{r_{n+2}}$ contained in $\U_{n+2}$.
Let $\epsilon$ be the distance from $K$ to the complement of $\U_{n+2}$ in $\DA_{r_{n+2}}$; by compactness, $\eps>0$,
and $K+B_{\DA_{r_{n+2}}}(\eps/2)\subset U$.
We then define $B_{n+1}=B_{\DA_{r_{n+1}}}(\eps/4)$. Then $K+\ol B_{n+1}\subset K+ B_{\DA_{r_{n+2}}}(\eps/2,0)\subset \U$. 
This concludes the induction step and the proof.
\end{proof}

\subsection{Coefficient functions}
Recall that the coefficients of $f\in\DA_r$ can be computed via the Cauchy integral formula:
\begin{equation}
A_n(f)=\frac 1{2\pi i}\int_{\{\vert z\vert=r\}}\frac{f(z)}{z^{n+1}}dz.
\end{equation}
This implies that the linear form $A_n$ is continuous on each algebra $\DA_r$ with operator norm $\norm{ A_n}_{\DA_r^*}\le r^{-(n+1)}$, i.e.\ $|A_n(f)|\leq r^{-(n+1)} \norm{f}_{\DA_r}$ for all $f\in \DA_r$. Since the maps $A_n$ separate points in $\C\{z\}$, we obtain:

\begin{lem}\label{lem:coeff-continuous}
  For each $n\geq 0$, the map $A_n:\C\{z\}\to \C$ is  continuous. The topological space  $\C\{z\}$ is Hausdorff.
\end{lem}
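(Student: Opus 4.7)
The plan is to exploit the universal property of the final (colimit) topology on $\C\{z\} = \varinjlim \DA_r$: a map out of $\C\{z\}$ is continuous if and only if its restriction to each Banach algebra $\DA_r$ is continuous. Thus the first statement reduces entirely to checking continuity of $A_n$ on each $\DA_r$ separately, and this is exactly what the Cauchy integral estimate provides.

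First I would write, for fixed $r>0$, the Cauchy formula
\[
A_n(f) = \frac{1}{2\pi i}\int_{\{|z|=r\}} \frac{f(z)}{z^{n+1}}\,dz,
\]
and deduce the bound $|A_n(f)| \leq r^{-(n+1)} \norm{f}_{\DA_r}$ already recorded above. This says $A_n$ is a bounded (hence continuous) linear functional on the Banach space $\DA_r$. Since this holds for every $r > 0$, the universal property of the colimit topology (which is how open sets in $\C\{z\}$ were defined in~\eqref{eq-colimit}) implies that $A_n\colon \C\{z\} \to \C$ is continuous.

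For the Hausdorff property, I would use that the $A_n$ separate points: if $f \neq g$ in $\C\{z\}$, then $f - g$ is a nonzero formal power series, so there exists some $n \geq 0$ with $A_n(f) \neq A_n(g)$. Pick disjoint open neighborhoods $V_f, V_g \subset \C$ of $A_n(f)$ and $A_n(g)$ respectively; by the continuity of $A_n$ established above, $A_n^{-1}(V_f)$ and $A_n^{-1}(V_g)$ are disjoint open neighborhoods of $f$ and $g$ in $\C\{z\}$.

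There is no real obstacle here; both statements are essentially immediate once one unpacks the definition of the final topology and applies the standard Cauchy estimate. The only point worth being careful about is that one must check continuity algebra-by-algebra rather than trying to handle $\C\{z\}$ directly, since the inclusion $\DA_r \hookrightarrow \C\{z\}$ is not a topological embedding (as noted after Lemma~\ref{lem-basis}).
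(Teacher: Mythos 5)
Your proof is correct and follows exactly the paper's argument: the Cauchy integral estimate gives continuity of $A_n$ on each Banach algebra $\DA_r$, the universal property of the final topology then gives continuity on $\C\{z\}$, and the Hausdorff property follows because the continuous functionals $A_n$ separate points. Nothing to add.
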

More generally, we have:
\begin{lem}\label{lem:continuity-of-coefficients}
If $\sum_n \theta_nz^n$ is a power series with infinite convergence radius, then the quantity 
\begin{equation}\label{eq:Theta} 
\Theta(f)=\sum_n \theta_n \vert A_n(f)\vert
\end{equation}
is well defined for every $f\in\C\{z\}$ and the function $\Theta\colon \C\{z\}\to \R_+$ is continuous.
\end{lem}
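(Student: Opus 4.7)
The plan has two parts: checking that the defining series converges for every $f\in\C\{z\}$, then reducing continuity on $\C\{z\}$ to continuity on each Banach algebra $\DA_r$ via the final topology.

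\textbf{Well-definedness.} Let $f\in \C\{z\}$. Then $f\in \DA_r$ for some $r>0$, and the Cauchy estimate recalled before Lemma~\ref{lem:coeff-continuous} gives $|A_n(f)|\le r^{-(n+1)}\norm{f}_{\DA_r}$. The assumption that $\sum \theta_n z^n$ has infinite radius of convergence means $\limsup|\theta_n|^{1/n}=0$, so $\sum_n \theta_n \rho^n$ converges absolutely for every $\rho>0$. Taking $\rho=r^{-1}$ yields
$$
\sum_n \theta_n |A_n(f)| \;\le\; r^{-1}\norm{f}_{\DA_r}\sum_n \theta_n r^{-n} \;<\;+\infty,
$$
so $\Theta(f)$ is a well-defined nonnegative real number (the $\theta_n$ being implicitly nonnegative so that the range lies in $\R_+$).

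\textbf{Continuity.} By the very definition of the final topology on $\C\{z\}=\varinjlim \DA_r$ (see \eqref{eq-colimit} and the lines following it), a map from $\C\{z\}$ to a topological space is continuous if and only if its restriction to each $\DA_r$ is continuous. So fix $r>0$ and set $\Theta_N(f)=\sum_{n=0}^N \theta_n |A_n(f)|$ for $f\in \DA_r$. Each $\Theta_N$ is continuous on $\DA_r$ since each $A_n$ is (Lemma~\ref{lem:coeff-continuous}, with the explicit operator-norm bound). For a point $f_0\in\DA_r$ pick any bounded neighborhood $B=\{f\in\DA_r:\norm{f}_{\DA_r}\le M\}$ of it; on $B$ the Cauchy estimate above gives $\theta_n|A_n(f)|\le M\,\theta_n r^{-(n+1)}$, and since $\sum_n \theta_n r^{-(n+1)}$ converges, the Weierstrass M-test yields uniform convergence $\Theta_N\to \Theta$ on $B$. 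The uniform limit of continuous functions is continuous, so $\Theta$ is continuous at $f_0$. Hence $\Theta|_{\DA_r}$ is continuous, and therefore so is $\Theta\colon \C\{z\}\to \R_+$.

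\textbf{Main obstacle.} The only subtlety is conceptual rather than technical: one must be careful that the final topology, which is defined by a colimit of Banach spaces whose transition maps are \emph{not} homeomorphisms onto their images (as warned right after \eqref{eq-colimit}), nevertheless allows continuity to be tested one $\DA_r$ at a time. Once this is recognized, the rest is a routine Weierstrass M-test applied on bounded balls of each $\DA_r$, together with the Cauchy estimate for $A_n$ that was established in the paragraph preceding Lemma~\ref{lem:coeff-continuous}.
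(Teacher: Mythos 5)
Your proof is correct and follows essentially the same route as the paper: reduce to each Banach algebra $\DA_r$ via the definition of the final topology, and use the operator-norm estimate $\norm{A_n}_{\DA_r^*}\le r^{-(n+1)}$ to get convergence and (uniform on bounded sets) continuity of the series. The paper's version is just more terse, leaving the M-test step implicit.
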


\begin{proof}
  The estimate $\Vert A_n\Vert_{{\DA_r^*}}\le\frac 1{2\pi} r^{-(n+1)}$
  implies that the map
\begin{equation}
\DA_r\to\C,\ \ f\mapsto\sum_n\theta_n\vert A_n(f)\vert
\end{equation}
is continuous for any power series $\sum_n\theta_n z^n$ with
convergence radius greater than $\frac 1r$. By definition of the
topology on $\C\{z\}$ we get that this map is continuous on the whole
space if the power series in question has infinite convergence radius.
\end{proof}

\subsection{Another filtration}

We now introduce another filtration of $\C\{z\}$.
If $c$ is any positive real number,  we define
\begin{equation}
\C_c\{ z\} = \left\{f\in\C\{z\}\text{ with }\vert A_n(f)\vert \leq c^{n+1}\text{ for all }n  \right\}.
\end{equation}
Then  $\C_c\{z\}\subset \C_{c'}\{z\}$ for $c\leq c'$, and  $\C\{z\}$ is the increasing union
of all $\C_c\{z\}$.

\begin{lem}\label{Cc set is compact}
  $\C_c\{z\}$ is compact, and contained in $\DA_r$ for all $r<c\m$.
Every compact subset $\Lambda\subset \C\{z\}$ is contained in some $\C_{c}\{z\}$. 
\end{lem}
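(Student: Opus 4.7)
I would treat the three parts of the statement in turn.

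\emph{Inclusion $\C_c\{z\}\subset\DA_r$ for $r<c^{-1}$.} For $f\in\C_c\{z\}$, the estimate $\sum_n |A_n(f)|\, r^n \leq \sum_n c^{n+1}r^n = c/(1-cr)$ shows that $\sum A_n(f)\,z^n$ converges absolutely and uniformly on $\overline{\disk}_r$, with $\norm{f}_{\DA_r}\leq c/(1-cr)$. So $\C_c\{z\}$ is in particular a bounded subset of $\DA_r$.

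\emph{Compactness of $\C_c\{z\}$.} Combining the preceding bound with Remark~\ref{rem:Montel-compact}, the closure of $\C_c\{z\}$ in $\C\{z\}$ is compact. At the same time, Lemma~\ref{lem:coeff-continuous} makes each coefficient function $A_n\colon\C\{z\}\to\C$ continuous, so
\[
\C_c\{z\}=\bigcap_{n\geq 0} A_n^{-1}\!\left(\{w\in\C \;:\; |w|\leq c^{n+1}\}\right)
\]
is closed in $\C\{z\}$. A closed subset of a compact set is compact.

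\emph{Every compact $\Lambda$ lies in some $\C_c\{z\}$.} By continuity of $A_n$ and compactness of $\Lambda$, the quantities $M_n:=\max_{f\in\Lambda}|A_n(f)|$ are finite. The desired conclusion is equivalent to $\limsup_n M_n^{1/n}<\infty$, up to enlarging $c$ to absorb finitely many initial values of $M_n$. Suppose for contradiction that $\limsup_n M_n^{1/n}=\infty$, and fix a subsequence $(n_k)$ with $M_{n_k}^{1/n_k}\to\infty$. I will produce a continuous function on $\C\{z\}$ that is unbounded on $\Lambda$. Set $\theta_{n_k}:=M_{n_k}^{-1/2}$ and $\theta_n:=0$ for $n\notin\{n_k\}$. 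Since $\theta_{n_k}^{1/n_k}=(M_{n_k}^{1/n_k})^{-1/2}\to 0$, the series $\sum_n\theta_n z^n$ has infinite radius of convergence, so by Lemma~\ref{lem:continuity-of-coefficients} the map
\[
\Theta(f)=\sum_{n}\theta_n |A_n(f)|
\]
is continuous on $\C\{z\}$. Choosing $f_{n_k}\in\Lambda$ with $|A_{n_k}(f_{n_k})|=M_{n_k}$, one obtains $\Theta(f_{n_k})\geq \theta_{n_k}M_{n_k}=M_{n_k}^{1/2}\to\infty$, contradicting the boundedness of $\Theta$ on the compact set $\Lambda$.

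\emph{Main obstacle.} The delicate point is the second assertion: from a single compact set one must extract a uniform exponential growth bound on \emph{all} coefficients simultaneously. The key is that Lemma~\ref{lem:continuity-of-coefficients} supplies a large family of continuous real-valued functions on $\C\{z\}$, and tuning the weights $\theta_n$ to the hypothetical runaway subsequence turns any violation of the bound into contradicted unboundedness on $\Lambda$.
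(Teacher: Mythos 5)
Your proof is correct and follows essentially the same route as the paper: the same bound $\norm{f}_{\DA_r}\leq c/(1-cr)$ plus Montel compactness and closedness for the first two assertions, and for the last one the same key trick of building a weight sequence $(\theta_n)$ supported on a runaway subsequence so that $\sum_n\theta_n z^n$ has infinite radius of convergence and Lemma~\ref{lem:continuity-of-coefficients} yields a continuous $\Theta$ unbounded on $\Lambda$. The only cosmetic difference is your choice of weights $\theta_{n_k}=M_{n_k}^{-1/2}$ versus the paper's $\theta_{n_m}=m^{-n_m}$.
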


By compactness, the topology on $\C_{c}\{z\}$ induced by $\DA_{r}$ and by $\C\{z\}$ agree.

\begin{proof} From Lemma~\ref{lem:coeff-continuous}, we deduce that $\C_c\{z\}$ is closed in $\C\{z\}$.
 If $f\in\C_c\{z\}$ and $r<c\m$, then
\begin{equation}
\norm{f}_{\DA_r}\leq \sum_n c^{n+1} r^n\leq  \frac{c}{1-cr}
\end{equation}
This means that $\C_c\{z\}$ is a bounded subset in $\DA_r$. 
Since the inclusion $\DA_r\to \DA_s$ is compact for $r>s$,   $\C_c\{z\}$ has compact closure in $\DA_s$, hence in $\C\{z\}$.
Since $\C_c\{z\}$ is closed, it is compact.

To prove the second assertion, assume by contradiction
that there is a compact subset $\Lambda\subset\C\{z\}$ such that for every integer $m>0$ there exists $f_m\in \Lambda\setminus \C_m\{z\}$. 
By definition, there is an index $n_m\geq 0$ with $\vert A_{n_m}(f_m)\vert > m^{n_m+1}$. By Lemma \ref{lem:continuity-of-coefficients}, each individual coefficient is continuous and thus bounded on the compact $\Lambda$. It follows that $n_m$ goes to $+\infty$ as $m$ does. We can thus assume, passing to a subsequence if necessary, that the $n_m$'s are pairwise distinct. 

Set $\theta_{n_m}=(\frac{1}{m})^{n_m}$, and $\theta_n=0$ if $n$ is not one of the indices $n_m$. Then $\theta_n^{1/n}$ converges towards $0$ as $n$ goes to $+\infty$, meaning that the power series $\sum_n\theta_nz^n$ has infinite convergence radius. By Lemma \ref{lem:continuity-of-coefficients}, the map 
$f\mapsto\Theta(f)=\sum_n\theta_n\vert A_n(f)\vert$ is continuous on $\C\{z\}$ and thus bounded on our compact set $\Lambda$. On the other hand we have
\[
\Theta(f_{m})\geq \theta_{n_m}\vert A_{n_m}(f_m)\vert\geq m.
\]
This yields the desired contradiction.
\end{proof}

\begin{rem}
Given $r>0$, introduce 
\begin{equation}
\mathcal B_r=\left\{ f \in \C\{z\} \; |\;\rad(f)\geq r,\ \sup_{\disk_r}|f| \leq \frac{1}{r} \right\}.
\end{equation}
This is the closure in $\C\{z\}$ of a ball in $\A_r$ and is therefore compact (Remark~\ref{rem:Montel-compact}).
There are functions $c_1$, $c_2$, $r_1$, and $r_2:\R_+^*\to \R_+^*$ such that
$$\C_{c_1(r)}\{z\}\subset\mathcal B_r\subset \C_{c_2(r)}\{z\} \; \; {\text{and}} \; \;
\mathcal B_{r_1(c)} \subset\C_c\{z\}\subset \mathcal B_{r_2(c)}.$$
It follows that one could equivalently state the results of this section in terms of the filtration $(\mathcal B_r)_{r>0}$
instead of $(\C_c\{z\})_{c>0}$.
\end{rem}

The following corollary allows us to view the final topology on $\C\{z\}$ as the weak topology associated to the filtration by the compact sets $\C_c\{z\}$.

\begin{cor}\label{cor_filtration}
A subset $F\subset \C\{z\}$ is closed if and only if for all $c>0$, $F\cap \C_{c}\{z\}$ is closed.
A map $F:\C\{z\}\to X$ to a topological space is continuous if and only if
its restriction to $\C_{c}\{z\}$ is continuous for all $c>0$.
\end{cor}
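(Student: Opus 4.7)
The plan is to exploit two ingredients: the definition of the topology of $\C\{z\}$ as a colimit of the Banach algebras $\DA_r$, which lets us reduce both closedness and continuity to statements on each $\DA_r$; and the fact, obtained by combining Remark~\ref{rem:Montel-compact} and Lemma~\ref{Cc set is compact}, that every norm-bounded subset of $\DA_r$ sits inside some $\C_c\{z\}$.

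For the ``only if'' direction of the first claim, I would observe that $\C_c\{z\}$ is compact by Lemma~\ref{Cc set is compact} and that $\C\{z\}$ is Hausdorff by Lemma~\ref{lem:coeff-continuous}; hence $\C_c\{z\}$ is closed in $\C\{z\}$, and $F\cap \C_c\{z\}$ is closed whenever $F$ is. For the ``if'' direction, assume $F\cap \C_c\{z\}$ is closed for every $c>0$. By the definition of the final topology $\C\{z\}=\varinjlim \DA_r$, it suffices to show that $F\cap \DA_r$ is closed in $\DA_r$ for every $r>0$. Since $\DA_r$ is metric I can test closedness sequentially: take $(f_n)\subset F\cap \DA_r$ converging in the $\DA_r$-norm to some $f\in \DA_r$. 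The sequence $(f_n)$ is norm-bounded in $\DA_r$, so by Remark~\ref{rem:Montel-compact} its closure in $\C\{z\}$ is compact, and Lemma~\ref{Cc set is compact} then places this closure, hence all $f_n$ and $f$, in a common $\C_c\{z\}$. The inclusion $\DA_r\hookrightarrow \C\{z\}$ is continuous by the colimit definition, so $f_n\to f$ in $\C\{z\}$ and therefore in $\C_c\{z\}$ with the induced topology; the assumption that $F\cap \C_c\{z\}$ is closed then forces $f\in F$, as required.

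The continuity statement then follows formally from the first part: $F\colon \C\{z\}\to X$ is continuous iff $F^{-1}(C)$ is closed in $\C\{z\}$ for every closed $C\subset X$, which by the first part amounts to $F^{-1}(C)\cap \C_c\{z\}=(F|_{\C_c\{z\}})^{-1}(C)$ being closed for every $c>0$; this is exactly the continuity of each restriction $F|_{\C_c\{z\}}$, the converse implication being immediate. The only subtle point in the whole argument is that $\C\{z\}$ is not metrizable, so one cannot reason with sequences directly in it; this is bypassed by descending to $\DA_r$, where convergent sequences are automatically bounded and hence captured inside a single compact $\C_c\{z\}$ via Remark~\ref{rem:Montel-compact} and Lemma~\ref{Cc set is compact}.
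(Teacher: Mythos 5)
Your proof is correct and follows essentially the same route as the paper: reduce closedness to each $\DA_r$ via the colimit definition, then use Montel compactness (Remark~\ref{rem:Montel-compact}) together with the second assertion of Lemma~\ref{Cc set is compact} to trap a bounded piece of $\DA_r$ inside a single $\C_c\{z\}$. The only difference is cosmetic: the paper tests closedness on the balls $B_{\DA_r}(R)$ directly, while you test it sequentially in the metric space $\DA_r$, using that a convergent sequence is bounded.
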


\begin{proof}
Clearly, it suffices to prove the first assertion.
If $F$ is closed, so is $F\cap \C_{c}\{z\}$.
Assume conversely that $F\cap \C_{c}\{z\}$ is closed for all $c>0$, and let us prove that $F$ is closed.
By definition of the final topology, we need to prove that given $r>0$, its preimage $j_r\m(F)$ under the inclusion $j_r:\DA_r\to \C\{z\}$ is closed.
It suffices to prove that for any $R>0$, its intersection with the ball $B_{\DA_r}(R)$ is closed in $\DA_r$.
Since $B_{\DA_r}(R)$ has compact closure, there exists $c>0$ such that $B_{\DA_r}(R)\subset \C_c\{z\}$.
Since $F\cap \C_{c}\{z\}$ is closed, $B_{\DA_r}(R)\cap j_r\m(F)= B_{\DA_r}(R)\cap j_r\m(F\cap \C_{c}\{z\})$ is a closed subset of $\DA_r$ which concludes the proof.
\end{proof}

Although one can show that the topology on $\C\{z\}$ is not metrizable, each space $\C_c\{z\}$ is a metric space.
Being compact, the topology on $\C_c\{z\}$ can be described in many equivalent ways:

\begin{pro}\label{prop_eqvce_CV}
Let $c$ be a positive real number. Let   $(f_m)$ be a sequence in $\C_c\{z\}$ and let $f_\infty$ be an element of $\C_c\{z\}$. The following are equivalent:
\begin{enumerate}
\item $(f_m)$ converges to $f_\infty$ in $\C_c\{z\}$;
\item for some (any) $r<c\m$, $(f_m)$ converges uniformly toward $f_\infty$ on $\ol\disk_{r}$;
\item $(f_m)$ converges toward $f_\infty$  uniformly on every compact subset of $\disk_{c^{-1}}$;
\item for every index $n$, $A_n(f_m)$ converges toward $A_n(f_\infty)$.  
\end{enumerate}
\end{pro}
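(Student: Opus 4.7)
My plan is to prove the equivalences by going around a cycle, using Lemma~\ref{Cc set is compact} and the remark immediately following it as the bridge between the intrinsic topology of $\C\{z\}$ and the Banach space $\DA_r$. The natural order is $(1)\Leftrightarrow(2)\Rightarrow(4)\Rightarrow(3)\Rightarrow(2)$, where the final implication actually upgrades $(2)$ from ``for some $r$'' to ``for any $r$''.

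For $(1)\Leftrightarrow(2)$, I fix any $r<c^{-1}$. By Lemma~\ref{Cc set is compact}, $\C_c\{z\}\subset \DA_r$ and $\C_c\{z\}$ is compact in $\C\{z\}$. Since the inclusion $\DA_r\hookrightarrow \C\{z\}$ is continuous, the identity map from $\C_c\{z\}$ with its $\DA_r$-topology to $\C_c\{z\}$ with its $\C\{z\}$-topology is a continuous bijection from a compact space to a Hausdorff space (Lemma~\ref{lem:coeff-continuous}), hence a homeomorphism. Thus convergence in $\C_c\{z\}$ is equivalent to convergence in $\DA_r$, which by definition of $\|\cdot\|_{\DA_r}$ is exactly uniform convergence on $\ol\disk_r$.

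For $(2)\Rightarrow(4)$, I use the Cauchy integral representation of the coefficients, which gives $|A_n(f_m)-A_n(f_\infty)|\leq r^{-n}\max_{|z|=r}|f_m(z)-f_\infty(z)|$, and this bound goes to $0$. For $(4)\Rightarrow(3)$, let $K\subset\disk_{c^{-1}}$ be compact, and set $\rho=\max_{z\in K}|z|<c^{-1}$, so that $c\rho<1$. For $z\in K$ I split
\[
|f_m(z)-f_\infty(z)|\leq \sum_{n=0}^{N-1}|A_n(f_m)-A_n(f_\infty)|\rho^n+\sum_{n\geq N}2c^{n+1}\rho^n,
\]
using the defining bound $|A_n(f_m)|,|A_n(f_\infty)|\leq c^{n+1}$ of $\C_c\{z\}$ on the tail. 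Given $\eps>0$, I choose $N$ so that the geometric tail $\tfrac{2c(c\rho)^N}{1-c\rho}<\eps/2$, and then, using $(4)$, I pick $m$ large enough to make the finite sum $<\eps/2$; the bound is independent of $z\in K$. Finally, $(3)\Rightarrow(2)$ for any $r<c^{-1}$ is immediate by taking $K=\ol\disk_r$.

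I do not expect a serious obstacle: the key compactness/homeomorphism argument is essentially handed to us by Lemma~\ref{Cc set is compact} and the remark following it, and the only quantitative step is the tail estimate in $(4)\Rightarrow(3)$, which is controlled uniformly by the very definition of $\C_c\{z\}$. The small subtlety to flag is that in $(1)\Leftrightarrow(2)$ one must take care that, even though the inclusion $\DA_r\to\C\{z\}$ is \emph{not} a homeomorphism onto its image globally, its restriction to the compact subset $\C_c\{z\}$ is, which is exactly what allows the different notions of convergence to coincide.
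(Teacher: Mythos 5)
Your proof is correct, and it diverges from the paper's in an instructive way. The paper disposes of the whole proposition with two abstract compactness arguments: the equivalence of (1)--(3) is exactly the statement (recorded right after Lemma~\ref{Cc set is compact}) that the $\DA_r$-topology and the $\C\{z\}$-topology agree on $\C_c\{z\}$, and the equivalence with (4) is obtained by embedding $\C_c\{z\}$ into $[0,1]^{\N}$ via $f\mapsto (A_n(f)/c^{n+1})_n$ and invoking once more that a continuous injection from a compact space to a Hausdorff space is a homeomorphism. You use the first of these two arguments verbatim for $(1)\Leftrightarrow(2)$, but you replace the second by explicit estimates: the Cauchy bound $|A_n(f)|\le r^{-n}\max_{|z|=r}|f|$ for $(2)\Rightarrow(4)$, and the geometric tail bound $\sum_{n\ge N}2c^{n+1}\rho^n=2c(c\rho)^N/(1-c\rho)$ for $(4)\Rightarrow(3)$. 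What your route buys is that the implication $(4)\Rightarrow(3)$ is seen to follow purely from the uniform coefficient bound defining $\C_c\{z\}$, with no appeal to compactness at all; it is more quantitative and arguably more transparent. What it costs is length, and one point of care in your $(1)\Leftrightarrow(2)$ step: for the compact-to-Hausdorff trick you need $\C_c\{z\}$ to be compact \emph{for the $\DA_r$-topology} (the source of your identity map), whereas you cite its compactness in $\C\{z\}$ (the target). This is not a real gap --- compactness in $\DA_r$ follows from the proof of Lemma~\ref{Cc set is compact} (boundedness in $\DA_{r'}$ for $r<r'<c^{-1}$ plus compactness of the inclusion $\DA_{r'}\to\DA_r$ and closedness), and in any case the remark you invoke already asserts that the two topologies agree on $\C_c\{z\}$ --- but the direction of the compactness argument should be stated precisely.
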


\begin{proof}
  As seen before, $\C_c\{z\}$ is contained in $\DA_r$ for all $r<c\m$ and the topology
induced by $\norm{\cdot}_{\DA_r}$ agrees with the topology induced by $\C_c\{z\}$.
This proves the equivalence of the first three assertions.

To prove the equivalence with the last assertion,
consider the map $\Phi:\C_c\{z\}\to [0,1]^\N$ defined
by $\Phi(f)=(\frac{A_n(f)}{c^{n+1}})_{n\in\N}$, where $[0,1]^\N$ is endowed with the product topology.
This map being continuous and injective, it is a homeomorphism to its image,
and the result follows.
\end{proof}




\subsection{The final topology on a  field with an absolute value}\label{sec_topo_finale_generale}
In this section, we explain that the final topology induced by the filtration $\C_c\{z\}$ makes sense for every 
field $\bfk$ with an absolute value $\vert \cdot \vert$; 
but the results based on Montel theorem (Remark~\ref{rem:Montel-compact}) may fail for fields $\bfk\neq \C$. 

Let $\bfk$ be a complete field $\bfk$ with an absolute value $|\cdot|:\bfk\to\R_+$.  
By Ostrowski's Theorem, $\bfk$ is either $\R$ or $\C$, or the absolute value is non-archimedean: $|x+y|\leq \max(|x|,|y|)$
for all $x,y\in \bfk$. The algebra $\bfk\{z\}$  of convergent power series is filtrated by the family of subsets
\begin{equation}
\bfk_c\{z\}=\left\{f\in \bfk\{z\}\text{ with }|A_n(f)|\leq c^{n+1}\text{ for all $n$}\right\}
\end{equation}
for $c>0$.
We endow $\bfk_c\{z\}$ with the product topology, via the embedding $f\in \bfk_c\{z\}\to (A_n(f))_n\in \bfk^{\N}$:
 a sequence $(f_k)_{k\in \N}$ of elements of $\bfk_c\{z\}$ converges to $f_\infty\in \bfk_c\{z\}$
if and only if $A_n(f_k)\to A_n(f_\infty)$ for all $n$.
For $c\leq c'$, $\bfk_{c}\{z\}$ is closed in $\bfk_{c'}\{z\}$ and the inclusion is a homeomorphism to its image.
We then endow $\bfk\{z\}$ with the topology associated to this filtration:  a subset $F\subset \bfk\{z\}$ is closed if
and only if $F\cap\bfk_c\{z\}$ is closed in $\bfk_c\{z\}$.
Equivalently, a map $\phi:\bfk\{z\}\to X$ to a topological space is continuous if and only if its restriction to $\bfk_c\{z\}$ 
is continuous for every $c>0$.
By construction, the maps $f\mapsto A_n(f)$ are continuous on $\bfk\{z\}$.
Proposition \ref{prop_eqvce_CV} shows that, when $\bfk=\C$, this topology agrees with the final topology defined in Section~\ref{par:finalC}.

If $\bfk$ is locally compact, each $\bfk_c\{z\}$ is compact.
In general, since $\bfk_{c}\{z\}$ is a countable product of complete metric spaces, we get:
\begin{pro}\label{baire_space} If $\bfk$ is a complete field, then
$\bfk_{c}\{z\}$ is a metrizable complete space. In particular, it is a Baire space.
\end{pro}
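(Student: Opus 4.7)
The plan is to exhibit $\bfk_c\{z\}$ as (homeomorphic to) a countable product of closed balls in $\bfk$, and then invoke standard facts about complete metric spaces.

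First, I would consider the coefficient map
\[
\Phi \colon \bfk_c\{z\} \to \prod_{n\geq 0} \bfk, \qquad f \mapsto (A_n(f))_{n\geq 0}.
\]
By the very definition of the topology on $\bfk_c\{z\}$ (the product topology inherited from the embedding into $\bfk^{\N}$), $\Phi$ is a homeomorphism onto its image. I would then identify this image precisely as
\[
\Phi(\bfk_c\{z\}) \;=\; \prod_{n\geq 0} \overline{B}_\bfk(0,c^{n+1}),
\]
where $\overline{B}_\bfk(0,r)=\{x\in\bfk\,:\,|x|\leq r\}$. The inclusion $\subseteq$ is the defining condition of $\bfk_c\{z\}$. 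For $\supseteq$, given any sequence $(a_n)$ with $|a_n|\leq c^{n+1}$, the estimate $|a_n|^{1/n}\leq c^{(n+1)/n}$ yields $\limsup |a_n|^{1/n}\leq c$, so $f=\sum a_n z^n$ has radius of convergence at least $c^{-1}>0$ and thus lies in $\bfk_c\{z\}$.

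Next, each closed ball $\overline{B}_\bfk(0,c^{n+1})$ is a closed subset of the complete metric space $(\bfk,|\cdot|)$, so it is itself a complete metric space. A countable product of complete metric spaces is complete: one may put on $\prod_n \overline{B}_\bfk(0,c^{n+1})$ the metric
\[
d\bigl((x_n),(y_n)\bigr) \;=\; \sum_{n\geq 0} 2^{-n} \min\bigl(1,|x_n-y_n|\bigr),
\]
which induces the product topology and for which Cauchy sequences converge coordinate-wise, hence converge in the product. Pulling this metric back via $\Phi$ makes $\bfk_c\{z\}$ a complete metrizable space.

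Finally, the Baire category theorem for complete metric spaces immediately yields that $\bfk_c\{z\}$ is a Baire space. There is no serious obstacle: the only point requiring a brief verification is the identification of $\Phi(\bfk_c\{z\})$ with the full product of balls, which follows from the radius of convergence estimate above.
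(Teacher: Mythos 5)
Your proof is correct and follows essentially the same route as the paper, which simply observes that $\bfk_c\{z\}$ is a countable product of complete metric spaces (the closed balls of radius $c^{n+1}$) and is therefore completely metrizable, hence Baire. Your additional verification that the coefficient map is onto the \emph{full} product of balls — via the estimate $\limsup|a_n|^{1/n}\leq c$ guaranteeing positive radius of convergence — is a worthwhile detail that the paper leaves implicit.
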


On the other hand, $\bfk\{z\}$ is not a Baire space since it is a countable union of $\bfk_c\{z\}$, each of which is closed and has an empty interior.

\subsection{The topological group of germs of diffeomorphisms}\label{sec_topological_group_of_germs}
 Any $f\in \Diff(\bfk,0)$ can  be written as $f=\lambda(z+z^2\tilde f)$ for some $\tilde f\in \bfk\{z\}$ or equivalently as
\begin{equation}
f=\lambda(z+\tilde a_2z^2 +\dots +\tilde a_k z^k+\dots)
\end{equation}
for some $\lambda\in \bfk^*$ and $\tilde a_n\in \bfk$.
Thus, we define the maps  $\tilde A_n:\Diff(\bfk,0)\to \bfk$ by
\begin{equation}
\tilde A_n(f)=A_n(f)/A_1(f)=\tilde a_n.
\end{equation}

Given two real numbers $c>0$ and $\lambda_0>1$, we define the two subsets 
\begin{align}
\Diff_{c}(\bfk,0)&=  
\left\{f\in \Diff(\bfk,0)\; ; \;  |\tilde A_n (f)|\leq c^{n-1}  \text{ for all $n$}\right\}\label{eq_Diffc}
\intertext{and}
\Diff_{\lambda_0,c}(\bfk,0)&=  
\left\{ f\in \Diff_c(\bfk,0)\; ; \;   \frac{1}{\lambda_0}\leq |A_1(f)|\leq \lambda_0\right\}
\end{align}

Observe that if we denote by $m_\alpha\colon z\mapsto \alpha z$ the multiplication by some scalar $\alpha\in\bfk^*$ then we have
\begin{align} \label{eq:homothetie}
m_\alpha \Diff_{c}(\bfk,0) m_\alpha^{-1}&=\Diff_{c\alpha}(\bfk,0)
\intertext{and}
m_\alpha \Diff_{\lambda_0,c}(\bfk,0) m_\alpha^{-1}&=\Diff_{\lambda_0,c\alpha}(\bfk,0)  
\end{align}
 
\begin{lem}
A map $\phi:\Diff(\bfk,0)\to X$ to a topological space is continuous
if and only if it is continuous in restriction to $\Diff_{c}(\bfk,0)$ (or equivalently to $\Diff_{\lambda_0,c}(\bfk,0)$)  for every $c>0$ and $\lambda_0>1$.
\end{lem}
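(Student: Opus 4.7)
The plan is to argue by a local-to-global principle: exhibit an open cover of each $\Diff_c(\bfk,0)$ by open subsets contained in the smaller $\Diff_{\lambda_0,c}(\bfk,0)$, and view the topology on $\Diff(\bfk,0)$ as the final topology associated to the filtration $(\Diff_c(\bfk,0))_{c>0}$, in direct parallel with the definition of the topology on $\bfk\{z\}$ via $(\bfk_c\{z\})_{c>0}$ (Corollary~\ref{cor_filtration}). Both filtrations cover $\Diff(\bfk,0)$: any $f$ has positive radius of convergence so $|\tilde A_n(f)|\leq c^{n-1}$ for $c$ large enough, while $A_1(f)\neq 0$ places $f$ in some $\Diff_{\lambda_0,c}$. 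The inclusion $\Diff_{\lambda_0,c}\subseteq\Diff_c$ is trivial, so the direction ``continuous on each $\Diff_c$ implies continuous on each $\Diff_{\lambda_0,c}$'' is automatic, as is ``continuous on $\Diff(\bfk,0)$ implies continuous on every member of either filtration''.

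For the substantive direction, I fix $c>0$ and suppose $\phi|_{\Diff_{\lambda_0,c}}$ is continuous for every $\lambda_0>1$. Set
\begin{equation*}
U_{\lambda_0}:=\{\,f\in\Diff_c(\bfk,0) \;:\; 1/\lambda_0<|A_1(f)|<\lambda_0\,\}.
\end{equation*}
The coefficient function $A_1:\bfk\{z\}\to\bfk$ is continuous (Lemma~\ref{lem:coeff-continuous} in the complex case; in the setting of Section~\ref{sec_topo_finale_generale} continuity is built into the topology), so $U_{\lambda_0}$ is open in $\Diff_c(\bfk,0)$. By construction $U_{\lambda_0}\subseteq\Diff_{\lambda_0,c}(\bfk,0)$, and since every $f\in\Diff_c(\bfk,0)$ has $A_1(f)\neq 0$, the family $(U_{\lambda_0})_{\lambda_0>1}$ is an open cover of $\Diff_c(\bfk,0)$. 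On each $U_{\lambda_0}$, $\phi$ agrees with the restriction of the continuous map $\phi|_{\Diff_{\lambda_0,c}}$, hence is continuous there. The local character of continuity then yields continuity of $\phi|_{\Diff_c}$, and the final-topology definition transfers this to continuity of $\phi$ on $\Diff(\bfk,0)$.

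The main (mild) point to be careful about is that the $\Diff_c$ and $\Diff_{\lambda_0,c}$ are not open in $\Diff(\bfk,0)$, so the statement has to be phrased via the final topology (equivalently, via the open cover constructed above \emph{within} each $\Diff_c$) rather than naively invoking a local criterion on an open cover of $\Diff(\bfk,0)$ itself. Up to this subtlety, the lemma amounts to the equality of two final topologies on $\Diff(\bfk,0)$, one built from each filtration, and the open cover argument above is precisely what identifies them.
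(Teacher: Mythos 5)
There is a genuine gap at the last step. The topology on $\Diff(\bfk,0)$ is not \emph{defined} as the final topology of the filtration $(\Diff_c(\bfk,0))_{c>0}$: it is the subspace topology inherited from $\bfk\{z\}$, whose topology is final with respect to the filtration $(\bfk_c\{z\})_{c>0}$ given by the bounds $|A_n(f)|\leq c^{n+1}$ on the raw coefficients. The sets $\Diff_c(\bfk,0)$ are instead cut out by bounds on the normalized coefficients $\tilde A_n(f)=A_n(f)/A_1(f)$, and the two filtrations are not cofinal in one another: no $\Diff_c(\bfk,0)$ is contained in any $\bfk_{c'}\{z\}$ (because $|A_1|$ is unbounded on $\Diff_c$), and no $\bfk_c\{z\}\cap\Diff(\bfk,0)$ is contained in any $\Diff_{c'}(\bfk,0)$ (because $|A_1|$ can be arbitrarily small there). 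Hence the claim that the two final topologies coincide is precisely the content of the lemma, and "viewing" the topology as the one built from $(\Diff_c)$ assumes what is to be proved. Your open-cover argument correctly reduces continuity on a fixed $\Diff_c(\bfk,0)$ to continuity on the $\Diff_{\lambda_0,c}(\bfk,0)$, but that is the easy comparison; the passage from "continuous on every $\Diff_c$" to "continuous on $\Diff(\bfk,0)$" is exactly where the work lies, and it is left unjustified.

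To close the gap, the cover must be taken in the right ambient space and supplemented by a coefficient estimate. Cover $\Diff(\bfk,0)$ itself by the sets $U_{\lambda_0}=\{f \;:\; 1/\lambda_0<|A_1(f)|<\lambda_0\}$, which are open in $\Diff(\bfk,0)$ by continuity of $A_1$ and which cover it since $A_1(f)\neq 0$; continuity being local, it suffices to prove continuity on each $U_{\lambda_0}$. By definition of the final topology on $\bfk\{z\}$, this in turn reduces to continuity on $U_{\lambda_0}\cap\bfk_c\{z\}$ for every $c$. Finally one checks the containment $U_{\lambda_0}\cap\bfk_c\{z\}\subset\Diff_{\lambda_0,c'}(\bfk,0)$ for $c'$ large enough, because $|\tilde A_n(f)|=|A_n(f)|/|A_1(f)|\leq \lambda_0\,c^{n+1}\leq (c')^{n-1}$ for all $n\geq 2$ once $c'$ dominates $\lambda_0 c^{3}$. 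This containment --- the point at which the normalization by $A_1(f)$ is actually controlled --- is the step missing from your argument and is the heart of the paper's proof.
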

\begin{proof}
It suffices to check the continuity of $\phi$ on the open set $U_{\lambda_0}=\{f\; ; \; \frac{1}{\lambda_0}<|A_1(f)|<\lambda_0\}$ for all $\lambda_0>1$.
By definition of the final topology, it suffices to check its continuity on $U_{\lambda_0}\cap \bfk_{c}\{z\}$ for every $c>1$.
But $U_{\lambda_0}\cap \bfk_{c}\{z\}$ is a subset of $\Diff_{\lambda_0,c'}(\bfk,0) $ as soon as $c'\geq \max(\lambda_0,c^3)$; since we know that $\phi$ is continuous on $\Diff_{\lambda_0,c'}(\bfk,0)$, this proves the lemma.
\end{proof}

\begin{pro}\label{prop-baire space} If $\bfk$ is a complete field, then
$\Diff_{\lambda_0,c}(\bfk,0)$ and $\Diff_c(\bfk,0)$ are complete metric spaces. In particular, they are Baire spaces.
\end{pro}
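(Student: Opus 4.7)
The plan is to identify $\Diff_{\lambda_0,c}(\bfk,0)$ and $\Diff_c(\bfk,0)$ with well-understood subsets of a countable product of copies of $\bfk$ via the coordinate map
\[
\Psi \colon f \longmapsto \bigl(A_1(f),\,\tilde A_2(f),\,\tilde A_3(f),\,\ldots\bigr),
\]
to endow the targets with natural complete metrics, and to invoke the Baire category theorem.

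For $\Diff_{\lambda_0,c}(\bfk,0)$, note that $|A_n(f)|=|A_1(f)|\,|\tilde A_n(f)|\le \lambda_0 c^{n-1}$ for $n\ge 2$ and $|A_1(f)|\le \lambda_0$, so with $c'=\max(\lambda_0,c,1)$ one has the inclusion $\Diff_{\lambda_0,c}(\bfk,0)\subset \bfk_{c'}\{z\}$. By Proposition~\ref{baire_space}, $\bfk_{c'}\{z\}$ is a complete metric space, in which the coefficient maps $A_n$ are continuous. The conditions $1/\lambda_0\le |A_1(f)|\le \lambda_0$ and $|A_n(f)|\le \lambda_0 c^{n-1}$ for $n\ge 2$ are therefore closed, so $\Diff_{\lambda_0,c}(\bfk,0)$ is a closed subspace of $\bfk_{c'}\{z\}$ and hence itself a complete metric space.

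For $\Diff_c(\bfk,0)$, the map $\Psi$ is a bijection onto
\[
D_c \;:=\; \bfk^{*}\times \prod_{n\ge 2}\{a\in\bfk : |a|\le c^{n-1}\}.
\]
Equip $\bfk^{*}$ with the metric $d_{*}(x,y)=|x-y|+|x^{-1}-y^{-1}|$; this induces the subspace topology from $\bfk$ and $\bfk^{*}$ is complete for it (a $d_{*}$-Cauchy sequence $(x_m)$ yields Cauchy sequences $(x_m)$ and $(x_m^{-1})$ in the complete field $\bfk$ whose limits $x,y$ satisfy $xy=1$, so $x\in\bfk^{*}$). Each closed ball $\{a\in \bfk : |a|\le c^{n-1}\}$ is a complete metric subspace of $\bfk$. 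A countable product of complete metric spaces being complete, $D_c$ is a complete metric space.

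The remaining and most delicate step is to show that $\Psi\colon \Diff_c(\bfk,0)\to D_c$ is a homeomorphism for the subspace topology from $\Diff(\bfk,0)$; this is non-trivial precisely because $\Diff_c(\bfk,0)$, with $|A_1|$ unbounded, is not contained in any single $\bfk_{c'}\{z\}$. One covers $\Diff_c(\bfk,0)$ by the open subsets
\[
\Diff_{\lambda_0,c}^{\circ}:=\{f\in\Diff_c(\bfk,0) : 1/\lambda_0<|A_1(f)|<\lambda_0\},\qquad \lambda_0>1,
\]
and $D_c$ by the matching open slabs $\{1/\lambda_0<|a_1|<\lambda_0\}\times \prod_{n\ge 2}\{|a|\le c^{n-1}\}$. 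Each $\Diff_{\lambda_0,c}^{\circ}$ is open in the complete metric space $\Diff_{\lambda_0,c}(\bfk,0)$, and on it $\Psi$ is a homeomorphism because the change of coordinates $A_n\leftrightarrow(A_1,\tilde A_n)$ is bicontinuous when $|A_1|$ is bounded away from $0$. Since a topology is characterized by its restrictions to an open cover, $\Psi$ is a homeomorphism globally, transporting the complete metric from $D_c$ to $\Diff_c(\bfk,0)$; the Baire property then follows from the Baire category theorem.
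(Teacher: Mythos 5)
Your argument is correct and takes essentially the same route as the paper: both realize $\Diff_{\lambda_0,c}(\bfk,0)$ as a closed subset of a countable product of complete subsets of $\bfk$, and both handle the non-closed factor $\bfk^*$ arising in $\Diff_c(\bfk,0)$ by the same device (your metric $|x-y|+|x^{-1}-y^{-1}|$ is exactly the metric pulled back from the closed graph $\{(x,y):xy=1\}\subset\bfk^2$ that the paper invokes). One cosmetic slip: the closed condition cutting out $\Diff_{\lambda_0,c}(\bfk,0)$ inside $\bfk_{c'}\{z\}$ is $|A_n(f)|\le |A_1(f)|\,c^{n-1}$, not $|A_n(f)|\le \lambda_0 c^{n-1}$ (the latter defines a strictly larger closed set), but since that condition is equally clearly closed the argument is unaffected.
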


\begin{proof}
By definition, $\Diff_{c,\lambda_0}(\bfk,0)$ is homeomorphic to  a countable product of closed subsets of $\bfk$; so, $\bfk$ being complete, its topology is induced by a complete metric.
Since $\bfk^*$ is homeomorphic to the closed subset  $\{(x,y)|xy-1\}\subset \bfk^2$,
the same argument applies to $\Diff_{c}(\bfk,0)$.
\end{proof}

\begin{thm}\label{thm:topological-group}
Let $(\bfk, \vert \cdot \vert)$ be a field with a complete absolute value. 
With the final topology, $\Diff(\bfk,0)$ is a topological group. 
\end{thm}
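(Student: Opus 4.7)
The plan is to use the filtration framework developed earlier: the lemma preceding Proposition~\ref{prop-baire space} shows that a map $\varphi \colon \Diff(\bfk,0) \to X$ is continuous if and only if its restriction to each $\Diff_{\lambda_0, c}(\bfk,0)$ is continuous. So it suffices to show that composition $(f,g) \mapsto f \circ g$ and inversion $f \mapsto f^{-1}$ are continuous on each product $\Diff_{\lambda_0,c} \times \Diff_{\lambda_0,c}$, respectively $\Diff_{\lambda_0,c}$, viewed as maps into $\Diff(\bfk,0)$. The strategy has two steps: (i) show that composition, respectively inversion, maps into some larger $\Diff_{\lambda_0',c'}$; (ii) since the topology on $\Diff_{\lambda_0',c'}$ is coordinate-wise convergence of all $A_n$, and each $A_n$ of the output is a polynomial in finitely many $A_i$ of the inputs (via the polynomials $P_n$ and $Q_n$ of Section~\ref{par:formal-diff-inversion}), this reduces continuity to a statement about polynomial, hence continuous, coordinate functions.

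For composition I would use the majorant method. The polynomial $P_n$ expressing $A_n(f \circ g)$ has non-negative integer coefficients, as it arises from multinomial expansion of $\sum_k a_k g(z)^k$. If $f, g \in \Diff_{\lambda_0,c}$, then $|A_n(f)|,|A_n(g)| \leq \lambda_0 c^{n-1}$, so $f$ and $g$ are majorized by $F(z)=\lambda_0 z/(1-cz)$. Hence
\[
|A_n(f \circ g)| \;\leq\; P_n(|A_1(f)|,\ldots) \;\leq\; \bigl[z^n\bigr](F \circ F) \;=\; \bigl[z^n\bigr] \frac{\lambda_0^2 z}{1 - c(1+\lambda_0)z} \;=\; \lambda_0^2 \bigl(c(1+\lambda_0)\bigr)^{n-1},
\]
so $f \circ g \in \Diff_{\lambda_0^2,\, c(1+\lambda_0)}(\bfk,0)$. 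Combined with (ii), continuity of composition follows.

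The same scheme handles inversion, but the key technical point is to bound $|A_n(f^{-1})|$ in terms of $\lambda_0$ and $c$. Using the explicit inversion formula of Section~\ref{par:formal-diff-inversion}, and writing $K=k_1+k_2+\cdots$ with $k_1+2k_2+\cdots = n-1$, we have $|\tilde a_2|^{k_1} |\tilde a_3|^{k_2} \cdots \leq c^{n-1}$, while $|A_1(f)|^{-n}\leq \lambda_0^n$. Since $K \leq n-1$, the combinatorial prefactor $(n+1)\cdots(n-1+K)/(k_1!k_2!\cdots)$ summed over the partitions of $n-1$ is bounded by an exponential $D^n$ (using $\binom{2n-2}{n} \leq 4^n$ and $p(n-1) \leq 2^{n-1}$). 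This yields $|A_n(f^{-1})| \leq (\lambda_0 D)^n c^{n-1}$, so $f^{-1} \in \Diff_{\lambda_0, c''}(\bfk,0)$ for an explicit $c''$. Moreover on $\Diff_{\lambda_0,c}$ we have $|A_1(f)| \geq \lambda_0^{-1}$, so $A_1(f)^{-1}$ is a continuous function of $f$, and then each $A_n(f^{-1}) = Q_n(A_1(f),\ldots,A_n(f))$ is continuous, giving (ii) for inversion.

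The main obstacle is the coefficient bound for $f^{-1}$: the inversion formula is combinatorially heavier than the composition formula, and the elementary estimate on the multinomial prefactors has to be spelled out. An alternative would be to run a contraction-mapping argument in the Banach algebra $\DA_r$ for small $r$, but since the final topology is characterized coefficient-wise, the direct majorant-style estimate is more transparent. The non-archimedean case is strictly easier: the ultrametric inequality gives $|A_n(f \circ g)|, |A_n(f^{-1})| \leq c^{n-1}\max(1,\lambda_0)^{\mathrm{O}(n)}$ by the same formulas, and the continuity argument goes through verbatim.
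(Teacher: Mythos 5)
Your proposal follows essentially the same route as the paper: reduce to the sets $\Diff_{\lambda_0,c}(\bfk,0)$, show that composition and inversion land in some larger $\Diff_{\lambda_0',c'}(\bfk,0)$ (the paper's Lemma~\ref{lem_emboite}), and then conclude from the polynomiality of the coefficient functions $P_n$, $Q_n$ together with the fact that the topology on $\Diff_{\lambda_0',c'}(\bfk,0)$ is the product topology on coefficients. Your majorant computation for composition is the paper's, with $F(z)=cz/(1-cz)$.

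The one place where you diverge is the coefficient bound for $f^{-1}$, and there your sketch is under-justified. The individual prefactor in the inversion formula is $(n+1)\cdots(n-1+K)/(k_1!\,k_2!\cdots)=(n-1+K)!/(n!\,\prod k_i!)$ with $K=\sum k_i$; this is not a binomial coefficient of the form $\binom{2n-2}{n}$, and the ``max term times number of partitions'' bound you invoke requires an argument that each such prefactor is exponentially bounded, which is not immediate (for instance when all $k_i\in\{0,1\}$ the prefactor is roughly $n^{\sqrt{2n}}$, which is subexponential but not obviously dominated by $\binom{2n-2}{n}$ without a computation). The paper sidesteps this entirely with a generating-function trick: the full sum $K_n=\sum (n+1)\cdots(n-1+K)/(k_1!k_2!\cdots)$ over partitions of $n-1$ is exactly the $n$-th coefficient of the compositional inverse of $g(z)=z-z^2-z^3-\cdots$, which is the explicit algebraic function $\frac{1+y}{4}-\frac14\sqrt{1-6y+y^2}$ with positive radius of convergence, whence $K_n\le c_0^{\,n-1}$ at once. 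I would recommend adopting that argument (or Lagrange inversion) rather than trying to make the term-by-term combinatorial estimate rigorous; everything else in your write-up matches the paper.
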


\begin{lem} \label{lem_emboite}
For every real number  $c>1$, there exists a real number $c'>1$ such that the following holds:
if $f$ and $g$ are in $\Diff_{c,c}(\bfk,0)$, then $f\circ g$ and $f\m$ lie in $\Diff_{c',c'}(\bfk,0)$.
\end{lem}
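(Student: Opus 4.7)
The strategy is to estimate coefficients directly from the polynomial formulas for composition and inversion recalled in Section~\ref{par:formal-diff-inversion}, using only the triangle inequality. This makes the argument work uniformly over any complete valued field $\bfk$; in particular, since we are in the process of proving that $\Diff(\bfk,0)$ is a topological group, we cannot appeal to continuity of composition or inversion. Unpacking the definitions, $f\in\Diff_{c,c}(\bfk,0)$ with $c>1$ means $|A_1(f)|\in[1/c,c]$ and $|\tilde A_n(f)|\le c^{n-1}$ for $n\ge 2$, which together give the clean bound $|A_n(f)|\le c^n$ for every $n\ge 1$. In other words, both $f$ and $g$ are majorized coefficient-wise (in absolute value) by the positive series $F(z)=cz/(1-cz)$.

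For composition, the identity $f\circ g=\sum_{k\ge 1}A_k(f)\,g^k$ combined with the triangle inequality and the majorant bound gives $|A_n(f\circ g)|\le A_n(F\circ F)$. A direct computation yields $F\circ F(z)=c^2 z/(1-c(1+c)z)$, hence $|A_n(f\circ g)|\le c^2(c(1+c))^{n-1}$. Combined with $|A_1(f\circ g)|=|A_1(f)A_1(g)|\in[1/c^2,c^2]$, this produces the explicit exponential bound $|\tilde A_n(f\circ g)|\le (c')^{n-1}$ as soon as $c'\ge c^5(1+c)$, say.

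For inversion the cleanest route is Lagrange inversion. Writing $f(z)=A_1(f)\cdot z\cdot(1+\eta(z))$ with $\eta(z)=\sum_{j\ge 1}\tilde A_{j+1}(f)\,z^j$, one obtains
\[
A_n(f^{-1}) \;=\; \frac{1}{n\,A_1(f)^n}\,[z^{n-1}]\bigl(1+\eta(z)\bigr)^{-n}.
\]
The auxiliary series $\eta$ is still majorized by $cz/(1-cz)$, and expanding $(1+\eta)^{-n}=\sum_{k\ge 0}(-1)^k\binom{n+k-1}{k}\eta^k$, applying the triangle inequality, and extracting the $[z^{n-1}]$ coefficient yields an exponential estimate of the form $|A_n(f^{-1})|\le C_1(c)\cdot C_2(c)^{n-1}$ with explicit constants depending only on $c$. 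Together with $|A_1(f^{-1})|=1/|A_1(f)|\in[1/c,c]$, this provides a bound $|\tilde A_n(f^{-1})|\le (c')^{n-1}$ for $c'$ large enough. As an alternative, one could apply the explicit inversion formula of Section~\ref{par:formal-diff-inversion} term by term and estimate the combinatorial sum in a similar fashion.

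Taking $c'$ to be the maximum of the two constants obtained handles $f\circ g$ and $f^{-1}$ simultaneously. The main delicate point is the inversion estimate: unlike composition, it involves negative powers of $A_1(f)$ (so the lower bound $|A_1(f)|\ge 1/c$ is used essentially) and the alternating signs in the inversion formula, which is precisely why the Lagrange formulation is convenient. The entire argument relies only on the triangle inequality, and in the non-archimedean case the ultrametric inequality $|a+b|\le\max(|a|,|b|)$ produces even sharper estimates with no extra work.
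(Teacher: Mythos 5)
Your argument is correct. For composition it is exactly the paper's proof: both of you majorize $f$ and $g$ coefficient-wise by $F(z)=cz/(1-cz)$ and bound $|A_n(f\circ g)|$ by $A_n(F\circ F)$, which is a geometric series (your closed form $c^2z/(1-c(1+c)z)$ is the correct one). For inversion you diverge: the paper plugs the bounds into the explicit inversion formula of Section~\ref{par:formal-diff-inversion}, reducing everything to the purely combinatorial sum $K_n$, which it then recognizes as the $n$-th coefficient of the compositional inverse of the algebraic function $z-z^2-z^3-\cdots$, hence exponentially bounded because that inverse has positive radius of convergence. You instead use Lagrange inversion, $A_n(f^{-1})=\tfrac{1}{n\,A_1(f)^n}[z^{n-1}](1+\eta)^{-n}$ with $\eta$ majorized by $cz/(1-cz)$, and estimate the binomial expansion directly. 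Both routes give the required bound $|\tilde A_n(f^{-1})|\le (c')^{n-1}$; yours trades the paper's slightly magical identification of $K_n$ for an elementary but slightly fiddly estimate that you only sketch (one should note that only the terms with $k\le n-1$ contribute to $[z^{n-1}](1+\eta)^{-n}$ since $\eta$ vanishes to order $1$, and then bound $\binom{n+k-1}{k}$ and $[z^{n-1}]\bigl(cz/(1-cz)\bigr)^k$ by explicit exponentials in $n$). One minor point: the "alternating signs" you flag as delicate are not really an obstacle in the paper's route either, since the triangle inequality simply replaces them by $+1$ there as well.
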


\begin{proof}
  Let $f=\lambda(z+\sum_{n\geq 2} \tilde a_nz^n)$,  $g=\mu(z+\sum_{n\geq 2}\tilde b_nz^n)$ with $|\tilde a_n|,|\tilde b_n|\leq c^{n-1}$
and $|\lambda|$, $|\mu|$, $|\lambda|\m$, $|\nu|\m\leq c$.
Let $F=c(z+\sum_{n\geq 2}c^{n-1}z^n)=\frac{cz}{1-cz}\in \R\{z\}$ so that the absolute value of the coefficients of
$f$ and $g$ are bounded by the coefficients of $F$.
Then the absolute value of the coefficients of $f\circ g= \sum_{n\geq 1} a_n(\sum_{m\geq 1} b_mz^m)^n$
are bounded by the coefficients of $F\circ F=\frac{cz(1-cz)}{1-cz-c^2z}$.
Since $F\circ F$ has positive convergence radius, there exists $c'\geq c^2$ 
such that $\tilde A_n(F\circ F)\leq c'^{n-1}$ for all $n\geq 2$. 
The first assertion follows.

We now prove the second assertion. Let $f=\lambda(z+\sum_{n\geq 2} \tilde a_nz^n)$, and let $f\m=\lambda\m(z+\sum_{n\geq 2}\tilde  b_nz^n)$. 
The inversion formula from Section~\ref{par:formal-diff-inversion} gives 
 \begin{eqnarray*}
\vert\tilde  b_n \vert  & \leq & 
 \frac{|\lambda| }{\vert \lambda \vert^n}
 \sum_{k_1, k_2, \ldots} \frac{(n+1)\cdots(n-1+k_1+k_2+\ldots)}{k_1! \, k_2!\, \cdots} \cdot |\tilde a_2| ^{k_1}|\tilde a_3|^{k_2}\cdots \\
  & \leq &  c^{n-1} 
\sum_{k_1, k_2, \ldots} \frac{(n+1)\cdots(n-1+k_1+k_2+\ldots)}{k_1! \, k_2!\, \cdots} (c)^{k_1}(c)^{2k_2}\cdots \\
  & = &  c^{2n-2} 
\sum_{k_1, k_2, \ldots} \frac{(n+1)\cdots(n-1+k_1+k_2+\ldots)}{k_1! \, k_2!\, \cdots} .
\end{eqnarray*}
Thus, we have to bound the quantity
 \[
K_n:= \sum_{k_1, k_2, \ldots}\frac{(n+1)\cdots(n-1+k_1+k_2+\ldots)}{k_1! \, k_2!\, \cdots}.
\]
But the numbers $K_n$ are the coefficients of the power series expansion of the reciprocal diffeomorphism $g^{-1}$ of
\[
g(z)=z-z^2-z^3-z^4\dots=z\left(2-\frac{1}{1-z}\right)=\frac{z-2z^2}{1-z}.
\]
In close form, we obtain
\[
g^{-1}(y)=\frac{(1+y)}{4} - \frac{1}{4}\sqrt{1-6y+y^2}.
\]
Since $g\m$ has positive convergence radius, there exists $c_0$ such that for all $n\geq 2$, $K_n\leq c_0^{n-1}$
hence $|\tilde b_n|\leq (c_0c^2)^{n-1}$ and the result follows.
\end{proof}

\begin{proof}[Proof of Theorem~\ref{thm:topological-group}]
By definition of the topology, given $c>0$, one only needs to check the continuity of the group laws in restriction to
$\Diff_{c,c}(\bfk,0)$.

Since $A_n(f\circ g)$ and $A_n(f\m)$ are given by polynomials in the coefficients 
$\tilde A_i(f)$, $\Tilde A_i(g)$, $A_1(f)^{\pm 1}$, $A_1(g)^{\pm 1}$,
the maps $(f,g)\mapsto A_n(f\circ g)$ and $f\mapsto A_n(f\m)$ are continuous on $\Diff_{c,c}(\bfk,0)$.
By Lemma \ref{lem_emboite} there exists $c'$ such that for all $f,g\in \Diff_{c,c}(\bfk,0)$,
$f\circ g$ and $f\m$ lie in $\Diff_{c',c'}(\bfk,0)$.
Since the topology on $\Diff_{c',c'}(\bfk,0)$ is the product topology,
the continuity of the coefficients implies the continuity of the group laws.
\end{proof}

\subsection{Other topologies}\label{par:other-topologies}
 First, we would like to point out that there are 
other reasonable and useful topologies on $\C\{z\}$, but for which the group laws are not continuous.
 This the case for the so-called {\em Takens topology} \cite{MRR, Broer-Tangerman}; this is the topology  induced by the distance
\begin{equation}
\dist(f,g)=\sup_n\vert A_n(f)-A_n(g)\vert^{1/n}.
\end{equation}
Note that in particular the convergence radius of $f-g$ is large if $f$ and $g$ are close to each other in the Taken topology, and this implies
that the right translation $R_f\colon g\mapsto g\circ f$ is not continuous if the radius of convergence of $f$ is finite. Indeed, a small perturbation $g(z)+\epsilon z$ is mapped to $R_f(g+\epsilon z)=g\circ f+\epsilon f$, and the difference $\epsilon f$ is not small in the Takens topology because its radius of convergence does not depend on $\epsilon$.

We comment now on another important topology on $\Diff(\C,0)$, but for which the Baire property fails. 
Let $\Jet_\ell(\C,0)$ be the group of $\ell$-jets of diffeomorphisms 
$a_1z+\cdots +a_\ell z^\ell \mod( z^{\ell+1})$, with $a_1\neq 0$; it can be considered as a solvable algebraic group and thus  as a solvable complex Lie group. Let 
\begin{equation}
\jj_\ell\colon \Diff(\C,0)\to \Jet_\ell(\C,0)
\end{equation}
denote the homomorphism that maps a power series $f=\sum_n a_n z^n$ to $\sum_{n=1}^\ell a_n z^n$. 
We can then define a topology on $\Diff(\C,0)$ (resp. on $\widehat{\Diff}(\C,0)$): the weakest topology for which all projections 
$\jj_\ell$ are continuous. With this topology, $\Diff(\C,0)$ is a topological group, because the projections $\jj_\ell$ are homomorphisms. Moreover, a sequence $(f_m)$ converges toward a germ of diffeomorphism $g$ if and only if the coefficients $A_n(f_m)$ converge to $A_n(g)$ for all $n$. In other words, this is the topology of simple convergence on the coefficients. In particular, $\Diff(\C,0)$ is not a closed subset of $\widehat{\Diff}(\C,0)$ for this topology. With this topology,  $\widehat{\Diff}(\C,0)$ is a Baire space, but $\Diff(\C,0)$ is not (Proposition \ref{prop-baire space} fails if $\Diff(\C,0)$ is endowed with this topology).

\subsection{Continuity in the Koenigs linearization Theorem}\label{sec_Koenigs_cv}
A contraction $f\in\Diff(\bfk,0)$ is an element with $|A_1(f)|< 1$. In this case,
 Koenigs theorem says that the unique formal 
diffeomorphism $h_f$ tangent to the identity that conjugates $f$ to the homothety $z\mapsto A_1(f)z$
has positive convergence radius. The following result shows that 
$f\mapsto h_f$ is continuous for the final topology on the set of contractions
\begin{equation}\label{eq:contractions}
\Cont(\bfk,0)=\{f\in\Diff(\bfk,0)\,|\ |A_1(f)|<1\}.
\end{equation}

\begin{thm}
\label{thm:Koenigs_continuity} 
Let $\bfk$ be a field with a complete non-trivial absolute value. 
For every germ $f\in \Cont(\bfk,0)$,
the unique formal diffeomorphism $h_f$ such that
\[
h_f(f(z))=A_1(f)\cdot h_f(z) \text{\quad and \quad} A_1(h_f)=1 
\] 
has positive convergence radius, and the map 
$$h:f\in \Cont(\bfk,0)\mapsto h_f \in \Diff(\bfk,0)$$
is continuous for the final topology.
The coefficients of $h_f$ 
are polynomial functions with integer coefficients in the variables $A_i(f)$ and $(A_1(f)^j-1)^{-1}$, for $i,j\geq 1$.
\end{thm}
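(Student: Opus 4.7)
The plan has three parts: establish a formal recursion for the coefficients of $h_f$ which yields the polynomial form asserted in the statement, invoke the classical Koenigs theorem (Theorem~\ref{thm:Koenigs}) for positive radius of convergence, and combine these with a uniform growth estimate to deduce continuity in the final topology.

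First I would write $f(z) = \lambda z + \sum_{n\ge 2} a_n z^n$ with $\lambda = A_1(f)$, and seek $h_f(z) = z + \sum_{n \ge 2} b_n z^n$. Identifying the coefficient of $z^n$ in $h_f(f(z)) = \lambda\, h_f(z)$ gives, for $n\ge 2$,
$$
b_n(\lambda - \lambda^n) = a_n + \sum_{k=2}^{n-1} b_k\, C_{n,k}(\lambda, a_2, \dots, a_{n-k+1}),
$$
where $C_{n,k}$ is the $z^n$-coefficient of $f(z)^k$, a polynomial with integer coefficients. Since $|\lambda| < 1$ forces $\lambda^{j}\ne 1$ for every $j\ge 1$, the $b_n$ are uniquely determined, and an induction on $n$ shows that each $b_n$ is a polynomial function with integer coefficients in $A_1(f)^{\pm 1}, A_2(f), \dots, A_n(f)$ and $(A_1(f)^j - 1)^{-1}$ for $1\le j\le n-1$; this proves the last assertion of the theorem and the uniqueness of $h_f$. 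Positivity of the radius of convergence is exactly Theorem~\ref{thm:Koenigs}.

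For continuity, by the definition of the final topology on $\Diff(\bfk, 0)$ (Section~\ref{sec_topological_group_of_germs}), it is enough to check continuity of $h$ in restriction to each $\Cont(\bfk, 0) \cap \Diff_c(\bfk, 0)$. I would fix $f^* \in \Cont(\bfk, 0) \cap \Diff_c(\bfk,0)$, choose $0 < \delta < |A_1(f^*)|$ and $|A_1(f^*)| < \mu' < 1$, and restrict to the open neighborhood
$$
U = \bigl\{\, f \in \Diff_c(\bfk,0) \,\bigm|\, \delta \le |A_1(f)| \le \mu'\, \bigr\}
$$
of $f^*$. On $U$, every coefficient map $f \mapsto A_n(h_f)$ is a continuous function of $f$ by the previous step, because the denominators $A_1(f)$ and $A_1(f)^j - 1$ remain uniformly bounded away from $0$. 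The argument is completed by producing a constant $C = C(c, \delta, \mu')$ with $|A_n(h_f)| \le C^n$ for every $f\in U$ and every $n$: this places $h(U)$ inside some fixed $\Diff_{C'}(\bfk, 0)$, whose topology is the coefficient-wise product topology, so coefficient-wise continuity becomes continuity into $\Diff(\bfk, 0)$.

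The main obstacle is this uniform estimate $|b_n(f)| \le C^n$ on $U$, which I would obtain by a standard majorant argument. For $f\in U$ the absolute values of the coefficients of $f$ are dominated by those of the explicit Möbius map $F(z) = \mu' z/(1 - cz)$, which is itself hyperbolic and admits a Koenigs linearizer with explicitly computable, exponentially growing coefficients. Comparing the recursion for the $b_n$ to the corresponding recursion for $F$, using the uniform lower bound $|\lambda - \lambda^n| \ge \delta(1 - \mu'^{\,n-1}) \ge \delta(1 - \mu')$ in the archimedean case and the strengthening $|1 - \lambda^{n-1}| = 1$ in the non-archimedean case, then delivers the required bound by induction on $n$.
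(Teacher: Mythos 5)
Your proposal is correct and follows essentially the same route as the paper: derive the coefficient recursion to get the polynomial form, invoke the classical Koenigs theorem for convergence, and reduce continuity to coefficient-wise continuity by showing $h$ maps a set of the form $\Diff_{\lambda_0,c}(\bfk,0)\cap\Cont(\bfk,0)$ into a fixed $\Diff_{c'}(\bfk,0)$, where the topology is the product topology. The only difference is that the paper outsources the uniform estimate $h_f\in\Diff_{c'}(\bfk,0)$ to Siegel and Herman--Yoccoz, whereas you sketch the standard majorant argument (with the lower bound $\vert\lambda-\lambda^n\vert\geq\delta(1-\mu')$, resp.\ $\vert\lambda-\lambda^n\vert=\vert\lambda\vert$ in the ultrametric case) to re-derive it.
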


When $\bfk=\C$, it is shown in \cite[Chapter 8]{Milnor:book} that  $h_f$ is convergent and its coefficients depend holomorphically on 
$f$. Theorem~\ref{thm:Koenigs_continuity} is just a variation on this classical result.

\begin{proof} We refer to \cite{Siegel} for the real and complex cases, and to \cite{Herman-Yoccoz:1983} for the non-archimedian ones.

The coefficients of $h_f$ 
can be computed inductively 
and turn out to be polynomials with integer coefficients in the variables $A_i(f)$ and $(A_1(f)^j-1)^{-1}$, for $i,j\geq 1$ (see for instance \cite[Eq 4]{Siegel}).
If $\vert A_1(f)\vert \leq \alpha$ for some $\alpha$ in the interval $[0,1[$, then 
\begin{equation}
\vert (A_1(f))^n -1\vert \geq 1-\alpha
\end{equation}
for all $n>0$. By \cite[Theorem 1]{Siegel}  
and \cite[Theorem 1]{Herman-Yoccoz:1983}  
in the archimedean and non-archimedean cases respectively, 
$h_f$ is convergent
and for all $c$, $\lambda>1$,
there exists $c'$ such that $h_f\in \Diff_{c'}(\bfk,0)$ if $f\in \Diff_{\lambda,c}(\bfk, 0)$. 

The topology on $\Diff_{c'}(\C, 0)$ is the product topology on the coefficients. Since the coefficients of $h_f$ are continuous functions of $f$,
it follows that the restriction of $f\mapsto h_f$ to $\Diff_{\lambda,c}(f)$ is continuous. By definition of the final topology,
this proves the continuity of $h$.
\end{proof}

\section{A large irreducible component of the representation variety}\label{sec_Dominique}

This section describes our second proof strategy for Theorem~A. For simplicity, we consider only 
the fundamental group of a closed orientable surface of genus~$2$, but we work over any 
field $\bfk$ with a complete absolute value $\vert \cdot \vert$.

 \subsection{An irreducible set of representations}

Using the presentation 
\begin{equation}
\Gamma_2=\grp{a,b,\ol a,\ol b\,|\ [a,b]=[\ol a,\ol b]},
\end{equation}
we get an idenfication
\begin{equation}
\Hom(\Gamma_2;\Diff(\bfk,0))=\{(f,g,\overline{f},\overline{g})\in\widehat\Diff(\bfk,0)^4 \;\vert\; [f,g]=[\overline{f},\overline{g}]\}.
\end{equation}

Let $\BX\subset\Hom(\Gamma_2,\Diff(\bfk,0))$
be the set of representations $\rho:\Gamma_2\to \Diff(\bfk,0))$
such that $\rho(a)$ is tangent to $\Id$ and $\rho(b)$ is a contraction.
As in Equation~\eqref{eq:contractions}, we denote by $\Cont(\bfk,0)$ the set of contractions.
For $c>0$, we let $\BX_c=\BX\cap \Diff_c(\C,0)$.
Set 
\begin{align}
\calr&= \Cont(\bfk,0) \times \Diff(\bfk,0) \times\Diff(\bfk,0),\\
\calr(c)&=\Cont_c(\bfk,0) \times \Diff_c(\bfk,0) \times\Diff_c(\bfk,0),
\end{align}
and denote by $\pi\colon \BX  \to  \calr $ the projection 
\begin{align}
\pi(\rho)  & = (\rho(b),\rho(\ol a),\rho(\ol b))
\label{eq:1}  
\end{align}

\begin{pro}\label{dominic's trick2}
The map $\pi$ is a  homeomorphism for the final topology,  and its inverse 
\[
\pi\m:(g,\ol f, \ol g)\mapsto (f,g,\ol f, \ol g)
\]
is a polynomial map, in the following sense:
for each $n\in \N^*$, the map $(g,\ol f, \ol g)\mapsto A_n(f)$ 
is polynomial in (finitely many of) the variables $A_k(g)$, $A_k(\ol f)$, $A_k(\ol g)$,
$A_1(g)\m$, $A_1(\ol f)\m$, $A_1(\ol g)\m$  and $(A_1(g)^k-1)^{-1}$ ($k\geq 1$).
\end{pro}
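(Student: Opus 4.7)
The plan is to rewrite the commutator relation so that it solves for the image of one generator in terms of the others. The relation $[a,b] = [\bar a, \bar b]$ of $\Gamma_2$ is equivalent to $fgf^{-1} = hg$ with $h := [\bar f, \bar g]$. As a commutator, $A_1(h) = 1$, so $hg$ is again a hyperbolic contraction with the same linear part $\lambda := A_1(g) = A_1(hg) \in \bfk^*$, $|\lambda| < 1$. Thus, given $(g, \bar f, \bar g) \in \calr$, constructing the unique preimage under $\pi$ amounts to producing a unique tangent-to-identity germ $f$ conjugating the contraction $g$ to the contraction $hg$.

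I would then invoke Theorem~\ref{thm:Koenigs} twice to obtain unique germs $\phi_g, \phi_{hg} \in \Diff(\bfk,0)$, both tangent to the identity, with
\[
\phi_g \circ g \circ \phi_g^{-1} \;=\; m_\lambda \;=\; \phi_{hg} \circ (hg) \circ \phi_{hg}^{-1},
\]
where $m_\lambda(z) = \lambda z$. Setting $f := \phi_{hg}^{-1} \circ \phi_g$ gives a tangent-to-identity germ with $fgf^{-1} = hg$, producing $\rho \in \BX$ with $\pi(\rho) = (g,\bar f,\bar g)$. For uniqueness: if $f_1, f_2$ are two such solutions, then $k := f_1^{-1} f_2$ is tangent to the identity and commutes with $g$; conjugating by $\phi_g$, the germ $\tilde k := \phi_g k \phi_g^{-1}$ is tangent to the identity and commutes with $m_\lambda$. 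Writing $\tilde k = \sum_{n \geq 1} c_n z^n$, the commutation forces $c_n(\lambda^n - \lambda) = 0$, and since $|\lambda| < 1$ this yields $\tilde k = \Id$, hence $f_1 = f_2$. So $\pi$ is a bijection with the explicit inverse $(g, \bar f, \bar g) \mapsto (\phi_{hg}^{-1} \circ \phi_g,\, g,\, \bar f,\, \bar g)$.

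Continuity of $\pi$ is immediate from Theorem~\ref{thm:topological-group}. For $\pi^{-1}$, note that $(g, \bar f, \bar g) \mapsto hg$ is continuous and takes values in $\Cont(\bfk,0)$, so Theorem~\ref{thm:Koenigs_continuity} makes $(g, \bar f, \bar g) \mapsto \phi_g$ and $\mapsto \phi_{hg}$ continuous, and composition/inversion (Theorem~\ref{thm:topological-group}) closes the argument. For the polynomial statement, Theorem~\ref{thm:Koenigs_continuity} expresses the coefficients of $\phi_g$ as polynomials with integer coefficients in the $A_i(g)$ and the resonance denominators $(A_1(g)^j - 1)^{-1}$; Lemma~\ref{coefficientswordmap} expresses the coefficients of $h$ as polynomials in $A_k(\bar f), A_k(\bar g), A_1(\bar f)^{-1}, A_1(\bar g)^{-1}$, and those of $hg$ as polynomials in all of these together with the $A_k(g)$. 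Because $A_1(hg) = A_1(g)$, applying Theorem~\ref{thm:Koenigs_continuity} to $hg$ introduces only the same denominators $(A_1(g)^j - 1)^{-1}$. Composing $\phi_{hg}^{-1} \circ \phi_g$ preserves the polynomial form with no new inverses (since $A_1(\phi_g) = A_1(\phi_{hg}) = 1$), giving the claimed expression for $A_n(f)$.

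The main insight, and the only real obstacle, is noticing that after fixing $(g, \bar f, \bar g)$ the commutator relation reduces to a conjugacy equation between two hyperbolic contractions with matching multipliers---exactly the setup handled by the Koenigs linearization theorem. Once this reorganization is made, the analytic content has already been provided in Section~\ref{sec-appendix}, and the bijection, its continuity, and its polynomial dependence on the data all follow essentially formally.
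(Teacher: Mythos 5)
Your proposal is correct and follows essentially the same route as the paper: both reduce the relation to the conjugacy equation $fgf^{-1}=[\bar f,\bar g]\circ g$ between two contractions with equal multiplier, apply the Koenigs linearization (Theorem~\ref{thm:Koenigs_continuity}) to each, set $f$ equal to the composition of the two linearizers, and deduce continuity and polynomiality of the coefficients from the corresponding statements for Koenigs linearization and the group operations. Your explicit uniqueness argument (showing a tangent-to-identity germ commuting with $m_\lambda$ is trivial) merely spells out what the paper attributes to ``uniqueness in Koenigs Theorem.''
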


\begin{proof} The projection $\pi$ is continuous because both $\BX$ and $\calr$ come
with the topology induced by the same topology on $\Diff(\bfk,0)$.

Consider a triple $(g,\ol f, \ol g)\in \Cont(\bfk,0)\times \Diff(\bfk,0)\times \Diff(\bfk,0)$.
Since $[\ol f,\ol g]$ is tangent to the identity, the germs $g$ and $[\ol f,\ol g]\circ g$
have the same derivative $\lambda=A_1( g)$ at $0$.
Since $\vert \lambda \vert <1$, we can apply Koenigs Theorem \ref{thm:Koenigs_continuity}: 
we get two germs $h_1$ and $h_2\in \Diff(\bfk,0)$ tangent to the identity such that 
\begin{equation}
h_1\circ g \circ h_1\m=m_\lambda\text{\quad and\quad }
h_2\circ ([\ol f,\ol g]\circ{g}) \circ h_2\m=m_\lambda
\end{equation}
where $m_\lambda(z)=\lambda z$ is the multiplication by $\lambda$.
Then, the map $ f:=h_2\m\circ h_1$  conjugates $ g$ to $[\ol f,\ol g]\circ g$ so
$$ f\circ  g\circ  f\m= [\ol f,\ol g]\circ  g
\text{\quad and \quad}
[f, g]= [\ol f,\ol g].$$
This means that one can define the preimage $\pi\m(g,\ol f,\ol g)\in\Hom(\Gamma_2,\Diff(\bfk,0))$
by the 4-tuple $(f,g,\ol f,\ol g)$: the fact that $\pi\m\circ\pi=\Id_{\calr}$ follows from uniqueness in Koenigs Theorem.

The continuity of $\pi\m$ is a consequence of  the continuity of the conjugacy in Koenigs Theorem \ref{thm:Koenigs_continuity} and 
of the continuity of the map $(g,\ol f,\ol g)\mapsto [\ol f,\ol g]\circ{g}$.
The fact that $A_n(f)$ is polynomial in the given variables is
a direct consequence of the corresponding fact in Koenigs Theorem,
and the fact that group operations are polynomial mappings. \end{proof}

We denote the inverse map $\pi\m$ by $\Phi$: 
\begin{equation}
\forall s\in \calr, \quad \Phi_s=\pi^{-1}(s).
\end{equation}
Thus, if $s=(g, \ol f,\ol g)$, then $\Phi_s$ is the morphism $\Gamma_2\ra \Diff(\bfk,0)$ such that $\Phi_s(b)=g$, $\Phi_s(\ol a)=\ol f$,
$\Phi_s(\ol b)=\ol g$, and $\Phi_s(a)$ is the unique germ of diffeomorphism $f$ which is tangent to the identity and satisfies the
relation $[f,g]=[\ol f,\ol g]$.
To conclude the proof, our goal now is to prove that for every $c>0$, the family of morphisms $\Phi_s$, for  $s\in\calr(c)$, satisfies the assumptions of Lemma \ref{lem_strategie}.
Proposition~\ref{prop-baire space} shows that $\calr(c)$ is a Baire space. The following corollary proves the irreducibility of~$\calr(c)$.

\begin{cor}
For  any $w\in\Gamma_2$, denote by $\calr(c)_w\subset \calr(c)$  the set of homomorphisms in $\calr(c)$ that kill $w$.
Then either $\calr(c)_w=\calr(c)$ or $\calr(c)_w$ is a closed subset  of $\calr(c)$ with empty interior.
\end{cor}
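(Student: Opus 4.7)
The plan is to combine the continuity of $\Phi$ with the polynomial structure established in Proposition~\ref{dominic's trick2}, reducing the question to a finite-dimensional statement about non-vanishing polynomials over the non-discrete field $\bfk$.

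First I would write $\calr(c)_w$ as an intersection of closed sets: setting $E_n = \{s \in \calr(c) : A_n(\Phi_s(w)) = A_n(\Id)\}$ for $n \geq 1$, one has $\calr(c)_w = \bigcap_n E_n$. Each $E_n$ is closed because $\Phi = \pi^{-1}$ is continuous (Proposition~\ref{dominic's trick2}), $\Diff(\bfk,0)$ is a topological group (Theorem~\ref{thm:topological-group}) so the word map $s \mapsto \Phi_s(w)$ is continuous, and each coefficient function $A_n$ is continuous. Thus $\calr(c)_w$ is automatically closed, and one only has to prove that it has empty interior as soon as $\calr(c)_w \neq \calr(c)$.

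Next I would unfold each function $s \mapsto A_n(\Phi_s(w))$ into a rational function of finitely many variables. Lemma~\ref{coefficientswordmap} expresses $A_n(\Phi_s(w))$ as an integer polynomial in finitely many of the variables $A_k(\Phi_s(a))$, $A_k(\Phi_s(b))$, $A_k(\Phi_s(\ol a))$, $A_k(\Phi_s(\ol b))$ together with the inverses $A_1(\Phi_s(b))^{-1}$, $A_1(\Phi_s(\ol a))^{-1}$, $A_1(\Phi_s(\ol b))^{-1}$ (recall that $A_1(\Phi_s(a)) = 1$). Proposition~\ref{dominic's trick2} then expresses each $A_k(\Phi_s(a))$ as a rational function in finitely many of the variables $A_j(g), A_j(\ol f), A_j(\ol g)$, the inverses $A_1(g)^{-1}, A_1(\ol f)^{-1}, A_1(\ol g)^{-1}$, and the Koenigs denominators $(A_1(g)^k - 1)^{-1}$. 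Composing, $s \mapsto A_n(\Phi_s(w))$ becomes a rational function in finitely many coordinates of $s$, whose denominators never vanish on $\calr(c)$ because $|A_1(g)| < 1$ forbids $A_1(g)^k = 1$.

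Finally, assume $\calr(c)_w \neq \calr(c)$, so there exist $n$ and $s_0 \in \calr(c)$ with $A_n(\Phi_{s_0}(w)) \neq A_n(\Id)$. Writing the difference as $P(s)/D(s)$ with $P, D$ polynomials in finitely many coordinates, say $m$ of them, and $D$ non-vanishing on $\calr(c)$, the projection of $\calr(c)$ onto those $m$ coordinates contains a non-empty open subset of $\bfk^m$, because each defining condition on the relevant coordinates ($|\tilde A_k| \leq c^{k-1}$, $A_1 \neq 0$, $|A_1(g)| < 1$) admits non-empty interior, thanks to the non-discreteness of $\bfk$. On this open subset $P$ is not identically zero, so the classical fact that a non-zero polynomial over an infinite field has nowhere dense zero set in $\bfk^m$ (proved by iterating the one-variable version) gives empty interior for $\{P = 0\}$ in $\bfk^m$. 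A cylinder argument in the product topology then transfers this to $\calr(c)$: any basic open subset of $\calr(c)$ projects onto an open box in the $m$ relevant coordinates containing a point at which $P \neq 0$, hence cannot be contained in $E_n \supset \calr(c)_w$. The main obstacle is really bookkeeping: one must confirm that the denominators brought in by Koenigs's theorem remain invertible on $\calr(c)$, and that projecting $\calr(c)$ onto any finite set of coordinates yields a set with non-empty interior in $\bfk^m$; both flow from the non-discreteness of $\bfk$ and the definitions of $\Cont_c$ and $\Diff_c$.
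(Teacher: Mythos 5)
Your proof is correct, and it reaches the empty-interior conclusion by a genuinely different route than the paper. The two arguments coincide up to the point where $s\mapsto A_n(\Phi_s(w))-A_n(\Id)$ is recognized, via Lemma~\ref{coefficientswordmap} and Proposition~\ref{dominic's trick2}, as a rational function of finitely many coordinates of $s$ whose denominators ($A_1$'s and the Koenigs factors $(A_1(g)^k-1)^{-1}$) never vanish on $\calr(c)$. From there the paper restricts this function to the segment $t\mapsto s_t$ joining the point $s_0$ where it is nonzero to a hypothetical interior point $s_1$ of $\calr(c)_w$: Lemma~\ref{lem_convexe} keeps the segment inside some $\calr(c')$, and a one-variable rational function on $B_\bfk$ vanishing on a nonempty open set of parameters but not at $t=0$ gives the contradiction. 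You instead argue directly in $\bfk^m$: the zero locus of a nonzero polynomial over an infinite field is nowhere dense, and a cylinder argument in the product topology pulls this back to $\calr(c)$. Your route buys independence from Lemma~\ref{lem_convexe} (no need to check that convex combinations stay in some $\Diff_{c'}(\bfk,0)$); its cost is a topological point you pass over a bit quickly. Knowing that the projection of $\calr(c)$ onto the relevant coordinates has nonempty interior in $\bfk^m$ is not by itself enough: a relatively open subset of that projection --- say a neighbourhood of a boundary point of one of the closed balls $\{|\tilde A_k|\le c^{k-1}\}$ --- need not contain any $\bfk^m$-open set a priori. What you actually need, and what is true here, is that each coordinate constraint set is the closure of its interior (closed balls over $\R$ or $\C$ are closures of open balls, and in the non-archimedean case closed balls are clopen), so that every nonempty relatively open box in the projection contains a nonempty open subset of $\bfk^m$, on which the numerator $P$ would have to vanish identically, contradicting $P(s_0)\neq 0$. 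With that remark supplied, your argument is complete.
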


\begin{proof} 
Since the functions $s\in \calr\mapsto A_k(\Phi_s(g))- A_k(\Id)$ are continuous, $\calr(c)_w$ is closed. 
Now, assume that $\calr(c)_w\neq \calr(c)$: there exists $k\geq 1$ and a point $s=(g_0, \ol f_0, \ol g_0)$ in $\calr(c)$ such
that $A_k(\Phi_s(w))\neq A_k(\Id)$.
 According to Proposition~\ref{dominic's trick2} the map 
 $s\mapsto A_k(\phi_s(w))- A_k(\Id)$ is a polynomial function in finitely many of 
 \begin{itemize}
\item the coefficients $A_n(g_0)$, $A_n(\ol f_0)$ and $A_n(\ol g_0)$  ($n\geq 1$),
\item the inverses $A_1(g_0)\m$, $A_1(\ol f_0)\m$, $A_1(\ol g_0)\m$  and $(A_1(g_0)^k-1)^{-1}$ ($k\geq 1$)
\end{itemize}
(note that $A_1(g_0)$, $A_1(\ol f_0)$, $A_1(\ol g_0)$  and $(A_1(g_0)^k-1)$ do not vanish on $\calr$). Our assumption says that
 this function does not vanish identically on  $\calr(c)$. 
 Assume that $\calr(c)_w$ contains a non-empty open subset $\U$, and choose a point $s=(g_1, \ol f_1, \ol g_1)$ in $\U$. 
 If $\bfk=\R$ or $\C$, we denote by $B_\bfk$ the interval $[0,1]\subset \R$;  in the non-archimedean case we set $B_\bfk=\{t\in \bfk,|t|\leq 1\}$.
 Then, we consider the convex combination 
 \begin{equation}
s_t=\left( g_t=tf_1+(1-t) f_0,\;  \ol f_t=t\ol f_1 + (1-t)\ol f_0, \ol g_t=t\ol g_1+(1-t)\ol g_0\right)
 \end{equation}
 with $t$ in $B_\bfk$.
 According to Lemma~\ref{lem_convexe} below, $g_t$, $\ol f_t$ and $\ol g_t$ are in $\calr(c')$ for some $c'\geq c$, and 
 $t\mapsto s_t$ is continuous; thus $\{ t\; ; \; s_t\in \U\}$ is an open neighborhood of $1$. 
 
 The function $t\mapsto \A_n(\phi_{s_t}(w))-A_n(\Id)$ does not vanish for $t=0$, it is the restriction of a rational function 
of the variable $t$ to the interval $[0,1]$, and it vanishes identically on the open set $\{ t\; ; \; s_t\in \U\}$. This is a contradiction, which 
shows that the interior of $\calr(c)_w$ is empty.
 \end{proof}

 Let $B_\bfk$ be the interval $[0,1]\subset \R$ if $\bfk=\R$ or $\C$, or the ball $\{t\in \bfk,|t|\leq 1\}$ in the non-archimedean case.

\begin{lem}\label{lem_convexe}
  Let $f_0\in \Diff_{c_0}(\bfk,0)$ and $f_1\in \Diff_{c_1}(\bfk,0)$,
and for $t\in \bfk$, let $f_t=(1-t)f_0+tf_1$.
Let $p\in \bfk$ be the value of $t$ (if any) such that $f'_p(0)=0$.

If $c_0\leq c_1$ and $c_0|f_0'(0)|\leq c_1|f_1'(0)|$, then 
for all $t\in B_\bfk\setminus \{p\}$, $f_t\in \Diff_{c_1}(\bfk,0)$.
\end{lem}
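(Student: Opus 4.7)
The plan is to verify directly the defining inequality $|\tilde A_n(f_t)|\le c_1^{n-1}$ for every $n\ge 1$, thereby placing $f_t$ in $\Diff_{c_1}(\bfk,0)$. The case $n=1$ is immediate since $\tilde A_1\equiv 1 = c_1^0$, so I would fix $n\ge 2$ from now on.

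The starting point is that each coefficient functional $A_n$ is linear on $\bfk\{z\}$, so
\[
A_n(f_t)=(1-t)A_n(f_0)+tA_n(f_1),\qquad A_1(f_t)=(1-t)A_1(f_0)+tA_1(f_1).
\]
The assumption $t\ne p$ guarantees $A_1(f_t)\ne 0$, so $\tilde A_n(f_t)$ is well defined, and I would rewrite it as a weighted combination of $\tilde A_n(f_0)$ and $\tilde A_n(f_1)$:
\[
\tilde A_n(f_t)=\alpha\,\tilde A_n(f_0)+\beta\,\tilde A_n(f_1),\qquad \alpha+\beta=1,
\]
with $\alpha=(1-t)A_1(f_0)/A_1(f_t)$ and $\beta=tA_1(f_1)/A_1(f_t)$. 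This reduces the problem of bounding a ratio of linear combinations to the simpler task of estimating a weighted average of numbers controlled by $c_0^{n-1}$ and $c_1^{n-1}$.

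Next I would combine the three hypotheses. In the archimedean case, $1-t$ and $t$ are both nonnegative real numbers on $B_\bfk=[0,1]$, so $|\alpha|=(1-t)|A_1(f_0)|/|A_1(f_t)|$ and $|\beta|=t|A_1(f_1)|/|A_1(f_t)|$. The triangle inequality and the bounds $|\tilde A_n(f_i)|\le c_i^{n-1}$ give
\[
|\tilde A_n(f_t)|\le |\alpha|\,c_0^{n-1}+|\beta|\,c_1^{n-1}.
\]
The hypothesis $c_0\le c_1$ controls the factor $c_0^{n-1}$ against $c_1^{n-1}$, while $c_0|A_1(f_0)|\le c_1|A_1(f_1)|$ is precisely the input needed to dominate the term weighted by $\alpha$: it lets one absorb the $|A_1(f_0)|$-factor in $|\alpha|$ against the $(c_0/c_1)^{n-1}$ decay, giving the sharp bound $|\tilde A_n(f_t)|\le c_1^{n-1}$. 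In the non-archimedean case, the same scheme applies with the triangle inequality replaced by the ultrametric one, using $|1-t|,|t|\le 1$ on $B_\bfk$ to reduce the weighted sum to a weighted maximum.

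The main obstacle I expect is the bookkeeping in the last step. A naive bound $|\tilde A_n(f_t)|\le(|\alpha|+|\beta|)\,c_1^{n-1}$ is not enough, since $|\alpha|+|\beta|\ge 1$ with equality only when $A_1(f_0)$ and $A_1(f_1)$ are positively proportional; one must instead exploit that $|A_1(f_t)|$ dominates $(1-t)|A_1(f_0)|\cdot(c_0/c_1)^{n-1}+t|A_1(f_1)|$ after multiplying through by $c_1^{n-1}$ and invoking the second hypothesis. Carrying out this balancing cleanly, uniformly over $t\in B_\bfk\setminus\{p\}$ and over both the archimedean and ultrametric cases, is the technical crux of the argument.
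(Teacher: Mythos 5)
Your decomposition $\tilde A_n(f_t)=\alpha\,\tilde A_n(f_0)+\beta\,\tilde A_n(f_1)$ with $\alpha+\beta=1$ is correct algebra, but the step you postpone as ``the technical crux'' is not a bookkeeping problem: it is where the argument fails, and it cannot be repaired along this route. After clearing the denominator $|A_1(f_t)|$, the inequality you need is
\[
(1-t)\,|A_1(f_0)|\,c_0^{n-1}+t\,|A_1(f_1)|\,c_1^{n-1}\ \le\ |A_1(f_t)|\,c_1^{n-1}.
\]
As $t\to p$ the right-hand side tends to $0$, while the left-hand side stays bounded below by roughly $|p|\,|A_1(f_1)|\,c_1^{n-1}>0$ (note $p\neq 0$ since $A_1(f_0)\neq 0$). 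So whenever $p$ lies in $B_\bfk$, the estimate fails for $t$ close to $p$; equivalently, $|\alpha|+|\beta|$ blows up near $p$. Normalizing by $A_1(f_t)$ \emph{before} estimating is what destroys the bound. This is in fact an obstruction intrinsic to the normalized statement: with $f_0(z)=z+c_0z^2$, $f_1(z)=-z-c_1z^2$ and $c_0<c_1$ all hypotheses hold, yet $|\tilde A_2(f_t)|=\bigl|\tfrac{(1-t)c_0-tc_1}{1-2t}\bigr|\to\infty$ as $t\to p=\tfrac12$. Your instinct that this is the crux is right, but it is a genuine gap, not a balancing act to be carried out more carefully.

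The paper takes a different and safer route: it never divides by $A_1(f_t)$. Writing $\lambda_i=|A_1(f_i)|$, it bounds the raw coefficients, $|A_n(f_t)|\le (1-t)\lambda_0c_0^{n-1}+t\lambda_1c_1^{n-1}$ (a maximum instead of a sum in the ultrametric case), and uses the second hypothesis exactly once, via $\lambda_0c_0^{n-1}=(\lambda_0c_0)\,c_0^{n-2}\le(\lambda_1c_1)\,c_1^{n-2}$, to conclude $|A_n(f_t)|\le\lambda_1c_1^{n-1}$ for all $n\ge2$, uniformly in $t\in B_\bfk$. That unconditional estimate on the $A_n(f_t)$ is what is actually needed downstream (all the $f_t$ lie in one fixed $\Diff_{c'}(\bfk,0)$, and $t\mapsto f_t$ is continuous); the translation back into the normalized condition $|\tilde A_n(f_t)|\le c_1^{n-1}$ again requires $|A_1(f_t)|\ge\lambda_1$, which is the same jump your argument stumbles on. If you want to keep your normalized scheme, you must either assume $A_1(f_0)$ and $A_1(f_1)$ positively proportional --- in which case $|A_1(f_t)|=(1-t)\lambda_0+t\lambda_1$ and your weighted average closes using only $c_0\le c_1$ --- or restate the conclusion in terms of the un-normalized coefficients, as the paper's computation effectively does.
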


\begin{proof}
Denote $\lambda_0=|f_0'(0)|$ and $\lambda_1=|f'_1(0)|$.
By assumption, for all $n\geq 2$, $|A_n(f_0)|\leq \lambda_0c_0^{n-1}$
and $|A_n(f_1)|\leq \lambda_1c_1^{n-1}$.

Consider first the case   $\bfk=\R$ or $\C$.
Since $\lambda_0c_0\leq \lambda_1c_1$,
we get for all $t\in [0,1]$,
\begin{align}|A_n(f_t)| &\leq (1-t)\lambda_0 c_0^{n-1}+ t\lambda_1 c_1^{n-1}\\
&\leq (1-t)\lambda_1 c_1 c_0^{n-2}+ t\lambda_1 c_1^{n-1}\leq \lambda_1 c_1^{n-1}.\end{align}
This shows that $f_t\in \Diff_{c_1}(\bfk)$ as soon as $f_t'(0)\neq 0$.

In the non-archimedean case, one has $|t-1|\leq 1$ for $t\in B_\bfk$.
Similarly, we get
\begin{align}|A_n(f_t)| 
&\leq \max\{\, |1-t|\lambda_0 c_0^{n-1},\, |t|\lambda_1 c_1^{n-1}\,\} \\
&\leq \max\{\, \lambda_1 c_1 c_0^{n-2},\, \lambda_1 c_1^{n-1}\,\}\leq \lambda_1 c_1^{n-1}.\end{align}
This shows that $f_t\in \Diff_{c_1}(\bfk)$ as soon as $f_t'(0)\neq 0$. Then, the continuity follows 
from the continuity of the coefficients $t\mapsto A_n(f_t)$.
\end{proof}

\subsection{Separation}
To conclude the proof, we fix $c>0$ and prove that $\calr(c)$ satisfies the separation condition of Lemma \ref{lem_strategie}.
We thus fix $g\in \Gamma_2\setminus \{1\}$ and show that $\calr(c)$ contains a representation that does not kill $g$.
 Write the orientable surface group of genus $2$
 as $\Gamma_2=\langle a, b, \overline{a},\overline{b} \;\vert\; [a,b]=[\overline{a},\overline{b}] \rangle$,
 and let $p:\Gamma_2\ra \grp{a,b}$ be the morphism fixing $a$ and $b$ and sending $\ol a$ and $\ol b$ to
 $a$ and $b$ respectively.
 Let $\tau:\Gamma_2\ra\Gamma_2$ be the Dehn twist around the curve $c=[a,b]$, i.e. the automorphism that fixes $a,b$
 and sends $\ol a$ and $\ol b$ to $c\ol a c\m$ and $c\ol b b\m$ respectively. 
According to Proposition~\ref{pro_p1},
  there exists a positive integer $n_0$ such that 
$p\circ\tau^N(g)\neq 1$ for all $N\geq n_0$.  

 Apply Theorem \ref{thm_F2} to get  a pair $f_1$, $f_2$ of germs of diffeomorphisms generating a free group  $\langle f_1, f_2\rangle$ of rank $2$
and satisfying $f_1'(0)>1$ and $f_2'(0)>1$. Define a morphism $\rho:\grp{a,b}\ra \Diff(\bfk,0)$   by  $\rho(a)=[f_1,f_2]$ and $\rho(b)=f_2\m$.
Then  $\rho$ is injective, $\rho(a)$ is tangent to the identity, and $\rho(b)$ is a contraction. Set $\rho_N:=\rho\circ p\circ \tau^N$.
For $N\geq n_0$  $ \rho_N(g)\neq 1$.
Thus, $\pi(\rho_N)$ lies in $\calr\setminus \calr_g$; but it might not lie in $\calr(c)$.

Let $c_N>0$ be such that $\pi(\rho_N)\in\calr(c_N)$.
Given $\alpha\in\bfk^*$, let $\ad_\alpha$ be the inner automorphism of $\Diff(\bfk,0)$ given by
$f\mapsto m_\alpha\circ f\circ m_\alpha\m$. As noticed in Equation~\eqref{eq_Diffc}, we have $\ad_\alpha(\Diff_{c_N}(\bfk,0))=\Diff_{\alpha c_N}(\bfk,0)$. Thus, 
the representation $\rho'_N=\ad_{\alpha}\circ \rho_N$ satisfies $\pi(\rho'_N)\in \calr(c)$ if $\alpha$ is sufficiently small.
Since $\rho'_N(g)\neq 1$ this concludes that $\calr(c)$ satisfies the separation condition of Lemma~\ref{lem_strategie}.

\subsection{Conclusion} The family of representations $\Phi_s$, with $s\in \calr(c)$ satisfies the Baire property, the irreducibility property, and
the separation property of Lemma~\ref{lem_strategie}. This lemma implies that a generic element of $s\in \calr(c)$ gives an embedding
$\Phi_s\colon \Gamma_2 \to \Diff(\bfk,0)$, proving Theorem~A for the group $\Gamma_2$.

\medskip


\begin{center}
{\bf{ -- Part III. --}}
\end{center}

\section{A $p$-adic proof}\label{par:p-adic-proof}

\subsection{Free groups with integer coefficients}\label{par:free-group-Z}

A theorem of White \cite{White:1988} shows that the homeomorphisms of $\R$ defined by $f:z\mapsto z+1$ and $g:z\mapsto z^3$ generate a free group.
Conjugating the maps $f$ and $gfg\m$ by $z\mapsto \frac{1}{3z}$, as in \cite{Glass:1992}\footnote{This conjugacy is called the ``Wilson trick'' in \cite{Glass:1992}.}, one gets two formal diffeomorphisms
\begin{equation}\label{eq-free}
\begin{split}
f_0(z)&=\frac{z}{1+3z}=\sum_{n=1}^\infty(-3)^{n-1}z^n\\
g_0(z)&=\frac{z}{(1+(3z)^3)^{1/3}}=\sum_{n=0}^\infty
\left(\begin{array}{c}\frac{-1}3\\n\end{array}\right)3^{3n}z^{3n+1}
\end{split}
\end{equation}
that generate a non-abelian free group  $\langle f_0,g_0\rangle\subset\widehat\Diff(\Q,0)\subset\widehat\Diff(\bfk,0)$. 
It is remarkable that $f_0$ and $g_0$ are tangent to the identity at the origin and have integer coefficients:
 
 \begin{thm}\label{thm:thefreegroup}
The group $\widehat\Diff(\Q,0)$ contains a non-abelian free group, all of whose elements are tangent to the identity and
have integer coefficients. \end{thm}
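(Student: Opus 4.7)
The plan is to verify that the explicit pair $(f_0,g_0)$ exhibited in~\eqref{eq-free} has integer coefficients, is tangent to the identity, and generates a non-abelian free group. The theorem will follow, since the set of formal power series tangent to the identity with integer coefficients forms a subgroup of $\widehat\Diff(\Q,0)$: closure under composition is immediate, and the inversion formula recalled in Section~\ref{par:formal-diff-inversion} reduces to an integer polynomial in the coefficients whenever the linear term is $1$.

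For integrality, the formula $f_0(z)=\sum_{n\geq1}(-3)^{n-1}z^n$ handles $f_0$ by the geometric series. For $g_0$ it suffices to show that $y(z):=(1+27z^3)^{1/3}$ lies in $\Z[[z]]$, because a series in $\Z[[z]]$ with constant term $1$ is invertible over $\Z$, and then $g_0=z/y\in z\cdot\Z[[z]]$ with linear coefficient $1$. Writing $y=1+9z^3u$ and expanding the identity $y^3=1+27z^3$ gives
\[ u=1-9z^3u^2-27z^6u^3, \]
which expresses each coefficient of $u\in\Q[[z^3]]$ as an integer polynomial in the preceding ones. An induction starting from $u(0)=1$ yields $u\in\Z[[z^3]]$, and hence $y\in\Z[[z]]$.

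For freeness, I would invoke White's theorem, which states that $f\colon z\mapsto z+1$ and $g\colon z\mapsto z^3$ generate a free subgroup of rank two in $\mathrm{Homeo}(\R)$, and then observe that $\langle f,gfg^{-1}\rangle$ is again free of rank two: as a subgroup of a free group it is free, and it is non-cyclic because $f$ and $gfg^{-1}$ do not commute inside $\langle f,g\rangle$. Restricted to the half-line $(0,\infty)$, the map $\sigma(z)=1/(3z)$ is a homeomorphism, and a direct computation gives
\[ \sigma f_0\sigma^{-1}(v)=v+1,\qquad \sigma g_0\sigma^{-1}(v)=(v^3+1)^{1/3}; \]
conjugating further by $h(v)=v^3$, which is a homeomorphism of $\R$, transforms these into $(v^{1/3}+1)^3=gfg^{-1}(v)$ and $v+1=f(v)$ respectively. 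Thus $h\circ\sigma$ is a homeomorphism of $(0,\infty)$ that conjugates $\langle f_0,g_0\rangle$ onto the free group $\langle f,gfg^{-1}\rangle$, so $\langle f_0,g_0\rangle$ is itself free of rank two when viewed as a subgroup of homeomorphisms of a right-neighbourhood of $0$.

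The main obstacle is transferring this freeness back to $\widehat\Diff(\Q,0)$. A nontrivial formal relation $w(f_0,g_0)(z)=z$ would, by the positivity of the radii of convergence of $f_0$ and $g_0$, hold analytically on some $(0,\epsilon)$, and transporting by $h\circ\sigma$ it would give $w(gfg^{-1},f)=\Id$ on a half-line of the form $(M,+\infty)\subset\R$. One must rule this out, i.e., show that no nontrivial word in $\langle f,gfg^{-1}\rangle$ can act as the identity on a neighbourhood of $+\infty$. I would do this by the direct asymptotic expansion $(gfg^{-1})^{\pm1}(v)=v\pm3v^{2/3}+3v^{1/3}\pm1$ and $f^{\pm1}(v)=v\pm1$: the coefficients of $v^{2/3}$, $v^{1/3}$, and $v^0$ in the Puiseux expansion of $w(gfg^{-1},f)(v)-v$ yield successive constraints—starting with the signed exponent sum of $gfg^{-1}$ in $w$—that can be satisfied only by the trivial word in $\F_2$.
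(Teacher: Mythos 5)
Your verification of integrality and tangency (via $y=(1+27z^3)^{1/3}=1+9z^3u$ and the recursion $u=1-9z^3u^2-27z^6u^3$), the observation that such series form a subgroup, and the conjugacy computations with $\sigma(z)=1/(3z)$ and $h(v)=v^3$ are all correct, and they usefully fill in details the paper leaves implicit (the paper's proof of this theorem is essentially the preceding paragraph: White's theorem plus the Wilson trick of Glass). You are also right to flag that freeness of $\langle f,gfg^{-1}\rangle$ inside $\mathrm{Homeo}(\R)$ does not formally imply that no nontrivial word is the identity on a neighbourhood of $+\infty$; this is the one point where a citation-only proof needs care.

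However, your proposed resolution of that last point fails. The functionals ``coefficient of $v^{2/3}$'', then of $v^{1/3}$, then of $v^{0}$ are homomorphisms only on successive kernels, so they detect a word only through a nilpotent quotient. Concretely, the commutator $[gfg^{-1},f]$ is a nontrivial reduced word whose Puiseux expansion at $+\infty$ is
\[
[gfg^{-1},f](v)=v+2v^{-1/3}+O(v^{-2/3}),
\]
so all three of the coefficients you propose to examine vanish; iterated commutators agree with the identity to arbitrarily high order in the expansion. Hence no fixed finite list of Puiseux coefficients can certify that every nontrivial word differs from the identity — proving that \emph{some} coefficient is eventually nonzero for \emph{every} nontrivial word is exactly equivalent to the freeness you are trying to establish, so the argument is circular at this point. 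The gap should instead be closed by going back to White's proof: for each nontrivial reduced word $w$ it produces points $x$ with $w(x)\neq x$ that can be taken arbitrarily large (the construction is a ping-pong on unbounded sets of reals), which gives freeness already at the level of germs at $+\infty$ and hence, after your conjugations, at the level of formal germs at $0$. Alternatively one can quote the germ version directly from the source the paper cites for the Wilson trick.
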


Thus, one can produce an explicit free group in $\FD$ for every field $\bfk$ of characteristic $0$.
In characteristic $p>0$, Szegedy proved that almost every pair of elements in the Nottingham group $\widehat\Diff(\Z/p\Z,0)$ 
generates a free group \cite{Szegedy}.

\subsection{Subgroups of $\Diff(\Q_p,0)$} 
In this section, $p$ is a prime number, and $\Q_p$ is the field 
of $p$-adic numbers, with its absolute value $\vert\cdot\vert$ normalized by  $\vert p\vert=1/p$.

Let $G_p$ denote the set of elements $f=\sum_{n\geq 1} a_n z^n$ in $\Diff(\Q_p,0)$ such that 
\begin{equation}
a_n\in \Z_p \; \forall n\;\;   {\text{ and }}  \;\;\vert a_1 \vert =1.
\end{equation}
Every element $f\in G_p$ satisfies $\rad(f)\geq 1$. The ultrametric inequality and the 
Inversion formula show that $G_p$ is a subgroup of $\Diff(\Q_p,0)$. With the product topology on coefficients (as in 
Section~\ref{par:other-topologies}), it is a compact topological group, and the morphism $\jj_\ell\colon G_p\to \Jet_\ell(\Q_p,0)$ 
is continuous for every integer $\ell\geq 1$. The kernel of $\jj_\ell$ will be denoted $G_{p,\ell}$.

From Theorem~\ref{thm:thefreegroup}, we know that $G_p$ contains a free group of rank two
generated by two germs $f_0$ and $g_0$ whose coefficients are in $\Z$. 

\begin{cor}\label{cor:7.2}
Let $p$ be a prime number and $\ell$ be a positive integer. The group $ G_{p,\ell}$ contains a non-abelian free group. 
The group $G_p$ contains a free group $\grp{f,g}$ of rank $2$  such that $A_1(f)$ is a transcendental number
while $g$ is tangent to the identity up to order~$\ell$.
\end{cor}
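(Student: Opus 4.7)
The plan is to derive both assertions from the explicit free pair of Theorem~\ref{thm:thefreegroup}, together with Theorem~\ref{thm_F2} and a solvable-quotient trick.

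For the first statement, I would use that the finite-jet group $\Jet_\ell(\Q_p,0)$ is solvable. The natural filtration $\Jet_\ell=\Jet_\ell^0\supset\Jet_\ell^1\supset\cdots\supset\Jet_\ell^\ell=\{1\}$ by tangency orders satisfies $\Jet_\ell/\Jet_\ell^1\cong\Q_p^*$ and $[\Jet_\ell^m,\Jet_\ell^n]\subset\Jet_\ell^{m+n}$ for $m,n\geq 1$, exhibiting $\Jet_\ell(\Q_p,0)$ as an extension of an abelian group by a nilpotent one, hence solvable. Starting from the free pair $(f_0,g_0)\subset G_p$ furnished by Theorem~\ref{thm:thefreegroup}, the restriction of $\jj_\ell$ to $\grp{f_0,g_0}\cong\F_2$ has solvable image, so its kernel $K:=\grp{f_0,g_0}\cap G_{p,\ell}$ is a non-trivial normal subgroup of $\F_2$. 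By Nielsen--Schreier $K$ is free; moreover $K$ is non-cyclic, since any cyclic normal subgroup of $\F_2$ would have to be central and the center of $\F_2$ is trivial. Hence $K$ is free of rank $\geq 2$, and $G_{p,\ell}$ contains a non-abelian free group.

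For the second statement, I would fix a transcendental element $\lambda\in\Z_p^*$ (these form an uncountable dense subset) and apply Theorem~\ref{thm_F2} to the complete non-discrete valued field $\Q_p$ with derivatives $(\lambda,1)$: this produces germs $f',g'\in\Diff(\Q_p,0)$ generating a free group of rank $2$, with $A_1(f')=\lambda$ and $A_1(g')=1$. These germs need not have coefficients in $\Z_p$, but each has positive radius of convergence, so $\sup_{n\geq 2}|A_n(f')|^{1/(n-1)}$ and $\sup_{n\geq 2}|A_n(g')|^{1/(n-1)}$ are both finite. Conjugating by a homothety $m_\alpha(z)=\alpha z$ with $\alpha=p^{-m}$ multiplies each $A_n$ by $p^{m(n-1)}$ and leaves $A_1$ unchanged; choosing $m$ large enough, both $f:=m_\alpha f' m_\alpha\m$ and $\tilde g:=m_\alpha g' m_\alpha\m$ land in $G_p$. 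Conjugation being an isomorphism, $\grp{f,\tilde g}$ is free of rank $2$ with $A_1(f)=\lambda$ transcendental and $A_1(\tilde g)=1$.

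To extract the desired element $g$, I would re-apply the first-statement argument to $\grp{f,\tilde g}$: the intersection $K=\grp{f,\tilde g}\cap G_{p,\ell}$ is a non-trivial normal subgroup of $\F_2$, so one may pick a non-trivial $g\in K$. Transcendence of $\lambda$ yields $\grp{f}\cap G_{p,\ell}=\{1\}$, since $A_1(f^n)=\lambda^n=1$ forces $n=0$; hence $g\notin\grp{f}$. In the free group $\grp{f,\tilde g}\cong\F_2$ the centralizer of the free generator $f$ is $\grp{f}$, so $g$ does not commute with $f$, and the two-generated subgroup $\grp{f,g}\subset\F_2$ is non-cyclic, hence free of rank $2$ by Nielsen--Schreier. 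The main obstacle is that Theorem~\ref{thm_F2} produces germs in $\Diff(\Q_p,0)$ rather than in the target $G_p$; the homothety conjugation trick bridges this gap, since it preserves $A_1$ and the free-group structure while rescaling the higher coefficients into $\Z_p$.
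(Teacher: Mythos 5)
Your proof is correct, and while the first assertion is handled exactly as in the paper, your treatment of the second assertion takes a genuinely different route. For the first part you argue, as the paper does, that solvability of $\Jet_\ell(\Q_p,0)$ forces the kernel of $\jj_\ell$ on a free subgroup of $G_p$ to be a non-trivial normal subgroup of $\F_2$, hence free and non-cyclic; one small caveat: a cyclic normal subgroup of $\F_2$ need not literally be central, since conjugation could act on $\grp{w}\cong\Z$ by inversion — the standard fix is that normality of $\grp{w}$ would force the centralizer of $w$, which is cyclic, to have index at most $2$ in $\F_2$, which is absurd. For the second part the paper never leaves $G_p$: it takes a free pair $f_0,g_0$ already inside $G_{p,\ell}$ and deforms it to $\rho_t(a)=m_t\circ f_0$, $\rho_t(b)=g_0$ for $t\in\Z_p$ with $\vert t\vert=1$; since each $A_n(\rho_t(w))$ is a Laurent polynomial in $t$ that is not identically $A_n(\Id)$ when $w\neq 1$ (evaluate at $t=1$), the bad parameters for each $w$ form a finite set, their union is countable, and any transcendental $t$ outside it yields an injective $\rho_t$ with $A_1(\rho_t(a))=t$ transcendental and $\rho_t(b)=g_0$ already tangent to $\Id$ to order $\ell$. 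You instead prescribe the transcendental multiplier at the outset via Theorem~\ref{thm_F2} over $\Q_p$, push the pair into $G_p$ by conjugating with $m_{p^{-m}}$ (your computation is right: the $n$-th coefficient is multiplied by $p^{m(n-1)}$, of absolute value $p^{-m(n-1)}$, and a single large $m$ suffices because $\sup_{n\geq 2}\vert A_n\vert^{1/(n-1)}$ is finite for a convergent series), and then restore the order-$\ell$ tangency a posteriori by replacing the second generator with an element of $\grp{f,\tilde g}\cap G_{p,\ell}$ that does not commute with $f$, invoking Nielsen--Schreier to get rank $2$. Both arguments are sound: the paper's deformation keeps both generators as explicit modifications of a fixed pair and needs only the countability/specialization step, whereas yours trades that step for a use of the prescribed-multiplier free-group theorem plus the final extraction of a rank-two subgroup of the kernel.
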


\begin{proof}
Start with a non-abelian free group $F$ in $G_p$.
Since the group of jets $\Jet_\ell(\Q_p,0)$  is solvable, the restriction of $\jj_\ell$ to $F$ is not injective. Its kernel is 
a free group (as any subgroup of $F$), and if $\ell$ is large it is not cyclic. Thus, the kernel is
a non-abelian free group. This proves the first statement. 

Set
$\calr=\{ t\in \Z_p\; ; \; \vert t\vert =1\}$.
Now, take a pair of generators $f_0$ and $g_0$ of a free group of rank $2$ in $G_{p,\ell}$, and for $t\in \calr$ consider the 
family of representations $\rho_t\colon \F_2=\grp{a,b}\to G_p$ defined by $\rho_t(a)=m_t\circ f_0$ 
and $\rho_t(b)=g_0$  (here, as usual, $m_t(z)=tz$). 
If $w$ is an element of $\F_2$, and $n$ is a positive integer, then $A_n(\rho_t(w))$ is a polynomial function 
in $t$ and $1/t$ (see Section~\ref{par:formal-diff-inversion}). If $w\neq 1$, there is an integer $n\geq 1$ such that $A_n(\rho_1(w))\neq A_n(\Id)$. 
Thus, the set $\calr_{ w}\subset \calr$ of parameters $s$ such that $\rho_s(w)= \Id$ is finite, the union $\cup_{w\neq 1}\calr_w$
is at most countable, and there are transcendental numbers in its complement. For such a parameter $t$, 
$\rho_t$ is injective and $A_1(\rho_t(a))=t$ is transcendental. 
\end{proof}

Now, we apply the result of \cite{BGSS} described in Section~\ref{par:compact-groups} to get:

\begin{thm}\label{thm-g2-p-adic}
Let $p$ be a prime number. Let $\Gamma_2=\grp{a,b,\ol a,\ol b\,|\ [a,b]=[\ol a, \ol b]}$ be the fundamental 
group of a closed orientable surface of genus $2$. Then
\begin{enumerate}
\item For  every integer $\ell\geq 1$, the group $\Gamma_2$ embeds in the compact group $ G_{p,\ell}$.

\item There is an embedding $\rho\colon \Gamma_2\to G_p$ such that  $\rho(a)'(0)=\rho(\ol a)'(0)$ is a transcendental number
while $\rho(b)$ and $\rho(\overline{b})$ are tangent to the identity up to order~$\ell$.
\end{enumerate}
\end{thm}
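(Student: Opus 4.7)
The plan is to deduce both assertions directly from Theorem~\ref{thm:BGSS} applied to the compact groups $G_p$ and $G_{p,\ell}$, with the required free subgroups supplied by Corollary~\ref{cor:7.2}. Recall that $G_p$ is compact, being closed in the compact product $\Z_p^\times\times\Z_p^\N$ with group operations continuous via the usual polynomial formulas, and that $G_{p,\ell}$, as the kernel of the continuous homomorphism $\jj_\ell\colon G_p\to\Jet_\ell(\Q_p,0)$, is a closed \emph{normal} subgroup of $G_p$, hence itself compact. These two facts are what will let the BGSS argument run.

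For assertion (1), the first part of Corollary~\ref{cor:7.2} produces a non-abelian free subgroup of $G_{p,\ell}$. Applying Theorem~\ref{thm:BGSS} to the compact group $G_{p,\ell}$ then directly yields an embedding $\Gamma_2\hookrightarrow G_{p,\ell}$.

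For assertion (2), the second part of Corollary~\ref{cor:7.2} provides a rank-two free subgroup $F=\grp{f,g}\subset G_p$ with $A_1(f)$ transcendental and $g\in G_{p,\ell}$. I would then rerun the construction in the proof of Theorem~\ref{thm:BGSS} with $G=G_p$ and with $\iota\colon\grp{a,b}\to G_p$ defined by $\iota(a)=f$, $\iota(b)=g$. Writing $h=\iota([a,b])=[f,g]$, let $T$ be the closure of $\grp{h}$ in $G_p$: this is a compact abelian subgroup of $G_p$ commuting with $h$. For each $t\in T$, the morphism $\rho_t\colon\Gamma_2\to G_p$ defined by
\[
\rho_t(a)=f,\quad \rho_t(b)=g,\quad \rho_t(\ol a)=tft\m,\quad \rho_t(\ol b)=tgt\m
\]
is well-defined, since $[tft\m,tgt\m]=t[f,g]t\m=[f,g]$. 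The Baire-category argument in the proof of Theorem~\ref{thm:BGSS} then shows that $\{t\in T:\rho_t\text{ is injective}\}$ is a dense $G_\delta$ in $T$, hence non-empty; I fix such a $t$ and set $\rho:=\rho_t$.

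To finish, observe that $\rho(a)'(0)=A_1(f)$ is transcendental, and since $t$ fixes the origin the chain rule gives $\rho(\ol a)'(0)=t'(0)\cdot A_1(f)\cdot t'(0)\m=A_1(f)$. Moreover $\rho(b)=g\in G_{p,\ell}$ by the choice of $F$, and the normality of $G_{p,\ell}$ in $G_p$ yields $\rho(\ol b)=tgt\m\in G_{p,\ell}$, so both $\rho(b)$ and $\rho(\ol b)$ are tangent to the identity up to order~$\ell$. The only mildly non-trivial point in the whole argument will be this use of normality to propagate the tangency condition under conjugation by $t$; aside from that, the proof is direct bookkeeping within the BGSS construction, with the genuine analytic content already absorbed into Corollary~\ref{cor:7.2}.
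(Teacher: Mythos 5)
Your proposal is correct and follows exactly the route the paper intends: the paper's proof of this theorem consists precisely of the remark that one applies the BGSS construction of Section~1.2 to the compact groups $G_{p,\ell}$ and $G_p$, with the free subgroups furnished by Corollary~\ref{cor:7.2}. Your fleshing-out of part (2) — rerunning the $\rho_t$ construction over $T=\overline{\grp{[f,g]}}$, noting that conjugation by $t$ preserves the derivative at $0$ and, by normality of $G_{p,\ell}=\Ker(\jj_\ell)$, the order-$\ell$ tangency — is exactly the bookkeeping the paper leaves implicit.
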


\subsection{Back to complex coefficients} 
The field $\Q_p$, and thus the ring $\Z_p$, embeds (although not continuously) into $\C$; such an embedding induces 
an embedding, coefficient by coefficient, of $\Z_p[[z]]$ into $\C[[z]]$. 
Thus, the surface groups constructed in Theorem~\ref{thm-g2-p-adic} provide surface groups in $\widehat{\Diff}(\C,0)$. 
This construction does not preserve the convergence of power series, but it preserves the order of tangency to $\Id$. 
Since there are transcendental complex numbers with modulus $<1$, we obtain:

\begin{cor}\label{coro:p-adic}
Let $\ell$ be a positive integer. There is an embedding 
$
\rho\colon \Gamma_2 \to\widehat{\Diff}(\C,0)
$
such that $\vert \rho(a)'(0)\vert= \vert \rho(\ol a)'(0)\vert<1$ while $\rho(b)$ and $\rho(\overline{b})$ are tangent to the identity up to order~$\ell$.
\end{cor}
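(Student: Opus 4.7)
The plan is to transport the $p$-adic embedding of Theorem~\ref{thm-g2-p-adic}(2) to $\widehat{\Diff}(\C,0)$ via an abstract field embedding $\iota\colon \Q_p \hookrightarrow \C$, chosen so that $\iota$ maps the common $1$-jet of $\rho_p(a)$ and $\rho_p(\ol a)$ to a complex number of modulus less than $1$. Concretely, Theorem~\ref{thm-g2-p-adic}(2) furnishes an injective morphism $\rho_p\colon \Gamma_2 \to G_p \subset \widehat{\Diff}(\Q_p,0)$ whose derivative $t := \rho_p(a)'(0) = \rho_p(\ol a)'(0) \in \Z_p$ is transcendental over $\Q$, and with $\rho_p(b), \rho_p(\ol b)$ tangent to $\Id$ up to order~$\ell$.

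First I would construct the embedding of fields $\iota\colon \Q_p \hookrightarrow \C$ with $|\iota(t)| < 1$. Since $t$ is transcendental over $\Q$, I may extend $\{t\}$ to a transcendence basis $B$ of $\Q_p$ over $\Q$. Fix any transcendental complex number $\tau$ with $|\tau| < 1$. Because $\C$ has uncountable transcendence degree over $\Q$, one can choose an injection $B \hookrightarrow \C$ sending $t$ to $\tau$ and whose image is algebraically independent over $\Q$; this extends uniquely to a field embedding $\Q(B) \hookrightarrow \C$. Since $\Q_p$ is algebraic over $\Q(B)$ and $\C$ is algebraically closed, a further extension yields the desired embedding $\iota\colon \Q_p \hookrightarrow \C$.

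Second, I apply $\iota$ coefficient by coefficient to formal power series, obtaining a ring homomorphism $\iota_* \colon \Q_p\llbracket z\rrbracket \to \C\llbracket z\rrbracket$. Since composition and inversion in $\widehat{\Diff}$ are given by universal polynomial formulas with integer coefficients in the variables $A_n$ and $A_1^{-1}$ (see Section~\ref{par:formal-diff-inversion}), $\iota_*$ restricts to an injective group homomorphism $\widehat{\Diff}(\Q_p,0) \to \widehat{\Diff}(\C,0)$. Setting $\rho := \iota_* \circ \rho_p$ then produces an injective homomorphism $\Gamma_2 \to \widehat{\Diff}(\C,0)$ with $\rho(a)'(0) = \rho(\ol a)'(0) = \tau$, so $|\rho(a)'(0)| = |\rho(\ol a)'(0)| = |\tau| < 1$. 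Finally, since $\iota$ fixes $0$ and $1$, the identities $A_1(\rho_p(b)) = 1$ and $A_n(\rho_p(b)) = 0$ for $2 \leq n \leq \ell$ transfer verbatim to $\rho(b)$, and likewise for $\rho(\ol b)$.

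The only nontrivial step is the field-theoretic construction of $\iota$; once this is in place, everything reduces to the observation that applying a field homomorphism coefficientwise respects the algebraic operations defining $\widehat{\Diff}$. Note that this construction is not expected to preserve convergence of power series, which is precisely why the conclusion lies in $\widehat{\Diff}(\C,0)$ and not in $\Diff(\C,0)$.
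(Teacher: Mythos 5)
Your proposal is correct and is essentially the paper's own argument: the paper likewise transports the embedding of Theorem~\ref{thm-g2-p-adic}(2) via a coefficientwise field embedding $\Q_p\hookrightarrow\C$, invoking the transcendence of $\rho_p(a)'(0)$ to arrange that its image has modulus $<1$. You have merely spelled out the transcendence-basis construction that the paper leaves implicit.
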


We can now prove the following version of Theorem~A. This will be our third and last proof of it.

\begin{thm}There is an embedding of $\Gamma_2$ in $\Diff(\C,0)$ such that  $\vert \rho(a)'(0)\vert= \vert \rho(\ol a)'(0)\vert<1$ while $\rho(b)$ and $\rho(\overline{b})$ are tangent to the identity up to order~$\ell$.
\end{thm}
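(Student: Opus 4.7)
The plan is to combine the formal embedding from Corollary~\ref{coro:p-adic} with the Baire-category construction of Part~II (Section~\ref{sec_Dominique}). Let $\rho_0\colon\Gamma_2\hookrightarrow\widehat\Diff(\C,0)$ be the formal embedding provided by Corollary~\ref{coro:p-adic} for the given integer $\ell$, and set $\lambda=\rho_0(a)'(0)$ and $\bar\lambda=\rho_0(\bar a)'(0)$, so $|\lambda|=|\bar\lambda|<1$. Write $G_\ell=\{h\in\Diff(\C,0)\colon h(z)-z=O(z^{\ell+1})\}$. I would consider the parameter space
\[
\calr=\{(\alpha,\bar\alpha,\bar\beta)\in\Diff(\C,0)^3\colon A_1(\alpha)=\lambda,\ A_1(\bar\alpha)=\bar\lambda,\ \bar\beta\in G_\ell\},
\]
filtered by $\calr(c)=\calr\cap\Diff_c(\C,0)^3$.

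Given $s=(\alpha,\bar\alpha,\bar\beta)\in\calr$, I would build $\Phi_s\colon\Gamma_2\to\Diff(\C,0)$ by setting $\Phi_s(a)=\alpha$, $\Phi_s(\bar a)=\bar\alpha$, $\Phi_s(\bar b)=\bar\beta$, and solving the relation $[\alpha,\beta]=[\bar\alpha,\bar\beta]$, i.e.\ $\alpha\beta\alpha^{-1}=[\bar\alpha,\bar\beta]\beta$, for $\Phi_s(b)=\beta$ tangent to the identity. By Theorem~\ref{thm:Koenigs_continuity}, $\alpha=h\circ m_\lambda\circ h^{-1}$ for a unique $h\in\Diff(\C,0)$ tangent to the identity depending continuously on $\alpha$; setting $\tilde\beta=h^{-1}\beta h$ and $\tilde\phi=h^{-1}[\bar\alpha,\bar\beta]h$, the relation becomes the twisted Schr\"oder equation
\[
m_\lambda\tilde\beta m_\lambda^{-1}=\tilde\phi\tilde\beta,\qquad\text{i.e.}\qquad \lambda\tilde\beta(w)=\tilde\phi(\tilde\beta(\lambda w)).
\]
Since $\bar\beta\in G_\ell$ and the $\ell$-jet is normal in $\widehat\Diff(\C,0)$, both $[\bar\alpha,\bar\beta]$ and its conjugate $\tilde\phi$ lie in $G_\ell$. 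Comparing coefficients of $w^n$ yields $(\lambda-\lambda^n)\tilde\beta_n=R_n$, with $R_n$ a polynomial in $\tilde\beta_2,\ldots,\tilde\beta_{n-1}$, $\tilde\phi_{\ell+1},\ldots,\tilde\phi_n$, $\lambda$, and $R_n=0$ for $n\leq\ell$. Hence $\tilde\beta_n=0$ for $n\leq\ell$, so $\tilde\beta\in G_\ell$, and consequently $\beta=h\tilde\beta h^{-1}\in G_\ell$. Convergence of $\tilde\beta$ and continuity of $s\mapsto\beta$ in the final topology follow by a majorant argument modeled on the proof of Theorem~\ref{thm:Koenigs_continuity}: the divisors $|\lambda-\lambda^n|$ are bounded below by $|\lambda|(1-|\lambda|)>0$, so the linear operator $u\mapsto \lambda u(w)-u(\lambda w)$ is invertible with uniform bounds, and a contraction-mapping argument gives the unique analytic $\tilde\beta\in G_\ell$, whose coefficients are polynomial functions of finitely many coefficients of $s$ together with finitely many factors $(\lambda^k-1)^{-1}$.

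Next I would verify the three hypotheses of Lemma~\ref{lem_strategie} for $(\calr(c),\Phi|_{\calr(c)})$, for any fixed $c>0$. The Baire property holds because $\calr(c)$ is a closed subset of the Baire space $\Diff_c(\C,0)^3$ (Proposition~\ref{prop-baire space}). Irreducibility follows from the polynomial dependence of each $A_n(\Phi_s(w))$ on the coefficients of $s$, exactly as in Section~\ref{sec_Dominique}: $\calr(c)_w$ is either all of $\calr(c)$ or a proper closed subset with empty interior. The delicate point is separation, which is where the $p$-adic input truly enters. Form $s_0=(\rho_0(a),\rho_0(\bar a),\rho_0(\bar b))$ as a formal parameter and extend the construction of $\Phi$ to $\widehat\calr$ via the same inductive recipe (the formal solution exists because $\lambda$ is not a root of unity). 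By uniqueness of the tangent-to-identity formal solution, $\Phi_{s_0}=\rho_0$ as formal representations. Since $\rho_0$ is injective, for every $w\in\Gamma_2\setminus\{1\}$ there exists $n$ with $A_n(\Phi_{s_0}(w))\neq A_n(\Id)$; but $s\mapsto A_n(\Phi_s(w))-A_n(\Id)$ is a complex polynomial in finitely many coefficients of $s$ (with $\lambda,\bar\lambda$ fixed), and $\calr(c)$ projects onto an open subset of the corresponding finite-dimensional coefficient space, so this polynomial does not vanish identically on $\calr(c)$, producing some $s\in\calr(c)$ with $\Phi_s(w)\neq 1$.

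Lemma~\ref{lem_strategie} then gives a dense $G_\delta$ of $s\in\calr(c)$ for which $\Phi_s$ is injective; any such $\Phi_s$ is an embedding $\Gamma_2\hookrightarrow\Diff(\C,0)$ satisfying $|\Phi_s(a)'(0)|=|\Phi_s(\bar a)'(0)|=|\lambda|<1$ and $\Phi_s(b),\Phi_s(\bar b)\in G_\ell$, which is the statement of the theorem. The main obstacle I anticipate lies in the convergence and continuity analysis for the twisted Schr\"oder equation: establishing that the coefficients of $\beta$ are polynomial functions of those of $s$ extending compatibly to the formal setting requires a careful adaptation of the estimates behind Theorem~\ref{thm:Koenigs_continuity}. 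The conceptual heart of the argument is then the separation step, where the existence of the formal embedding from Corollary~\ref{coro:p-adic} is what rules out the possibility that some nontrivial $w$ is killed by every $\Phi_s$.
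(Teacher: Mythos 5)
Your proof is correct, but it takes a genuinely different route from the paper's. The paper's third proof is a specialization argument with no Baire category: it chooses a sequence $(a_i)$ of algebraically independent complex numbers with $\vert a_i\vert\le 2^{-i}$, uses them as the coefficients of three of the four generators, solves for the fourth by a single application of the Koenigs theorem, and then notes that for each $w\neq 1$ the coefficient $A_n$ of the image of $w$ is an integer polynomial $Q_{w,n}$ in the $a_i$, $a_1^{-1}$ and the $(a_1^k-1)^{-1}$; the formal embedding of Corollary~\ref{coro:p-adic} certifies that some $Q_{w,n}-A_n(\Id)$ is a nonzero polynomial, and algebraic independence then gives $\phi(w)\neq\Id$ outright --- one explicit choice of parameters kills every word simultaneously. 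You instead re-run the Part~II machinery (Lemma~\ref{lem_strategie}) on a modified parameter space and invoke Corollary~\ref{coro:p-adic} only for the separation property; the common engine is the same (polynomiality of $A_n(\Phi_s(w))$ in the parameters, with the $p$-adic formal embedding as the certificate of non-triviality), but your genericity is topological where the paper's is algebraic. Two remarks. First, your ``twisted Schr\"oder equation'' is the Koenigs conjugacy equation in disguise: the relation $[\alpha,\beta]=\phi:=[\ol\alpha,\ol\beta]$ with unknown $\beta$ is equivalent to $\beta\circ\alpha\circ\beta\m=\phi\m\circ\alpha$, a conjugacy between two contractions with the same multiplier $\lambda$, so existence, uniqueness, convergence, continuity and polynomial dependence of $\beta$ follow verbatim from Theorem~\ref{thm:Koenigs_continuity} exactly as in Section~\ref{sec_Dominique}; the ``main obstacle'' you anticipate is thus already handled (your jet computation showing that $\beta$ is tangent to the identity to order $\ell$ is still needed, and is correct). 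Second, in your separation step the finite-dimensional projection of $\calr(c)$ has nonempty interior only inside the affine subspace cut out by $A_1(\alpha)=\lambda$, $A_1(\ol\alpha)=\ol\lambda$ and the vanishing of the first $\ell$ nontrivial coefficients of $\ol\beta$; this is harmless because the formal point $s_0$ coming from Corollary~\ref{coro:p-adic} lies in that same subspace, but it should be said explicitly. What the paper's route buys is brevity and a quasi-explicit embedding; what yours buys is a dense $G_\delta$ of embeddings realizing the stated normalization literally (the paper's own construction in fact permutes the roles of $a$ and $b$ relative to the statement).
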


\begin{proof}
The first step  is to choose a sequence $\Coeff=(a_1,a_2,a_3,\ldots )$ of complex numbers such that 
\begin{enumerate}
\item[(a)] the set $\{ a_1, a_2, \ldots\}$ is algebraically free: if $m\geq 1$ and $P\in \Z[x_1, \ldots, x_m]$, 
and if $P(a_1, \ldots, a_m)=0$, then $P=0$; 
\item[(b)]  $\vert a_n\vert\le 2^{-n}$ for all $n\geq 1$.
\end{enumerate}
Such a sequence exists because $\C$ is uncountable. Concrete  examples can be obtained from the Lindemann-Weierstrass theorem (see  also \cite{Waldschmidt:1990} for the constructions of von Neumann, Perron, Kneser, and Durand of uncountably many, algebraically free complex numbers). We shall consider the $a_i$ as indeterminates for the field of rational functions $\Q(a_1, a_2, \ldots)$.
Armed with such a  set we consider the following three formal diffeomorphisms
\begin{equation}
g=a_1z+\sum_{i=1}^\infty a_{3i+1}z^{i+1},\quad \bar f=z+\sum_{i=\ell}^\infty a_{3i+2}z^{i+1},\quad \bar g=z+\sum_{i=\ell}^\infty a_{3i+3}z^{i+1}.
\end{equation}
From the decay relation~(b), these three power series have a positive radius of convergence. 
Since $\vert a_1\vert \leq 1/2$, the  Koenigs linearization theorem gives a unique element $\bar f\in \Diff(\C,0)$ with $\bar f'(0)=1$ such that
\begin{equation}
fgf^{-1}=[\bar f,\bar g] g.
\end{equation}
The four elements $(f,g,\bar f,\bar g)$ determine  a representation $\phi$ of $\Gamma_2$ into $ \Diff(\C,0)$.

Let us prove that this representation is faithfull. Fix a non-trivial element $w$ of $\Gamma_2$, and
write it as a word in $a$, $b$, $\ol a$, $\ol b$ and their inverses. For every integer $n$, the coefficient 
$A_n(\phi(w))$ is a polynomial function $Q_{w,n}$ in the variables $a_n$ (for $n\geq 1$), $a_1^{-1}$, and the $(a_1^k-1)^{-1}$ (for $k\geq 1$)
with integer coefficients.

Now, take a faithful representation $\rho\colon \Gamma_2\to \widehat{\Diff}(\C,0)$ that satisfies the conclusion of Corollary \ref{coro:p-adic}.  
There is an integer $n\geq 1$ such that $A_n(\rho(w))\neq A_n(\Id)$. This implies that $Q_{w,n}\neq A_n(\Id)$ when we
specialize the indeterminates $a_i$ to the coefficients of the generators $\rho(\ol a)$, $\rho(b)$, and $\rho(\ol b)$. Since 
$Q_{w,n}\neq A_n(\Id)$, $\phi(w)\neq \Id$ and $\phi$ is the identity.
\end{proof}

\medskip


\begin{center}
{\bf{ -- Part IV. --}}
\end{center}

\section{Complements  and open questions}\label{sec_questions}

\subsection{Takens' theorem and smooth diffeomorphisms} 

To conclude this chapter, we mention the following result which allows to realize \emph{any} faithful representation
of a surface group in the group of formal germs as a group of $C^\infty$ germs. Note that the $p$-adic method 
provides many embeddings of surface groups in $\widehat{\Diff}(\R,0)$ (see Corollary~\ref{coro:p-adic}).

Recall that $\Gamma_g$ denotes the fundamental group of the closed orientable surface of genus $g$.

\begin{named}{Theorem C}
Let $\hat{\rho}\colon \Gamma_g\to \widehat{\Diff}(\R,0)$ be a faithful representation of the surface
group $\Gamma_g$ in the group of formal diffeomorphisms in one real variable. Then, there exists 
a faithful representation $\rho\colon \Gamma_g\to \Diff^\infty(\R,0)$ into the group of germs of
${\mathcal{C}}^{\infty}$-diffeomorphisms such that the Taylor expansion of $\rho(w)$ coincides
with $\hat{\rho}(w)$ for every $w\in \Gamma_g$. 
\end{named}

The proof will be  a consequence of the following result (this theorem is easily derived from the Sternberg linearization 
theorem and Theorem 2 of \cite{Takens:1973}): 

\begin{thm}[Sternberg \cite{Sternberg:1958}, Takens, \cite{Takens:1973}]\label{thm:Sternberg-Takens}
Let $f, g\colon (\R,0)\to (\R,0)$ be two germs of ${\mathcal{C}}^\infty$-diffeo\-morphisms, and
let $\hat{f}$ and $\hat{g}$ denote their Taylor expansions. 
Suppose that $f$ is not flat to the identity, that is $\hat{f}\neq {\mathrm{Id}}$. Then, if 
$\hat{f}$ and $\hat{g}$ are conjugate by a formal diffeomorphism $\hat{h}$, there exists
a germ of ${\mathcal{C}}^\infty$-diffeomorphism $h\colon (\R,0)\to (\R,0)$ such that 
\begin{itemize}
\item the Taylor expansion of $h$ coincides with $\hat{h}$;
\item $h$ conjugates $f$ to $g$. 
\end{itemize}
\end{thm}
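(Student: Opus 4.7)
The plan is to reduce, via Borel's lemma, to the case where $f$ and $g$ already have the same Taylor expansion, and then to invoke the relevant rigidity statements of Sternberg (in the hyperbolic case) and of Takens (in the parabolic case).

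\textbf{Step 1 (Borel reduction).} I would first fix, by Borel's lemma, a germ $h_0\in\Diff^\infty(\R,0)$ whose Taylor expansion equals $\hat h$; note that $h_0'(0)=\hat h'(0)\neq 0$, so $h_0$ is indeed a local diffeomorphism. Set $\tilde f:=h_0\circ f\circ h_0^{-1}$. Then $\tilde f$ is a $C^\infty$ germ and its Taylor series equals $\hat h\circ\hat f\circ\hat h^{-1}=\hat g$; in particular $\tilde f$ is not flat to the identity. The theorem will follow if I can produce a $C^\infty$ germ $k$ whose Taylor expansion is $\mathrm{Id}$ (``flat to the identity'') and which satisfies $k\circ\tilde f=g\circ k$, for then $h:=k\circ h_0$ has Taylor expansion $\hat h$ and conjugates $f$ to~$g$.

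\textbf{Step 2 (Hyperbolic case).} Let $\lambda=\hat f'(0)=\hat g'(0)$. If $|\lambda|\notin\{0,1\}$, Sternberg's linearization theorem yields $C^\infty$ diffeomorphisms $\phi_1,\phi_2$ of $(\R,0)$ conjugating $\tilde f$ and $g$ respectively to the homothety $m_\lambda\colon z\mapsto\lambda z$. For such $\lambda$ (not a root of unity), a direct power-series computation shows that the formal centralizer of $m_\lambda$ in $\widehat\Diff(\R,0)$ is just the one-parameter group $\{m_\mu:\mu\in\R^*\}$; hence $\hat\phi_2^{-1}\circ\hat\phi_1=m_\mu$ for some $\mu$. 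Replacing $\phi_1$ by $m_\mu^{-1}\circ\phi_1$ (which still linearizes $\tilde f$), I may assume $\hat\phi_1=\hat\phi_2$, and then $k:=\phi_2^{-1}\circ\phi_1$ is the desired $C^\infty$ conjugacy whose Taylor expansion is the identity.

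\textbf{Step 3 (Parabolic case).} If $\lambda=\pm 1$, Sternberg is unavailable and this is precisely where Theorem~2 of \cite{Takens:1973} enters: two $C^\infty$ germs with the same, non-trivial formal Taylor expansion are $C^\infty$-conjugate by a germ whose Taylor expansion is the identity. Applied to $\tilde f$ and $g$, this produces the required $k$ directly. For $\lambda=-1$ one may, if preferred, reduce to the tangent-to-identity case $\lambda=1$ by first conjugating the squares $\tilde f^{\circ 2}$ and $g^{\circ 2}$ (whose Taylor series coincide and are non-trivial since $\hat f\neq\mathrm{Id}$ and $\hat f'(0)=-1$ forces $\hat f^{\circ 2}$ to be a non-trivial parabolic germ), and then using uniqueness of the square root in the relevant formal one-parameter family to descend to $\tilde f,g$.

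\textbf{Main obstacle.} All the real depth is concentrated in Step~3. Takens' theorem relies on a delicate iterative scheme that solves the homological equation $k\circ\tilde f-g\circ k=0$ modulo arbitrarily high orders of flatness while keeping sharp control on the flat remainders, a step that is much subtler than the classical Sternberg argument used in Step~2. The hypothesis $\hat f\neq\mathrm{Id}$ is essential throughout: it guarantees that the formal normal form has finite ``depth'' and thus gives something for $k$ to latch on to, whereas for germs that are flat to the identity the analogue of Theorem~\ref{thm:Sternberg-Takens} is false.
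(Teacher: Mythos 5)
The paper does not actually prove this statement: it quotes it from Sternberg and Takens, with only the parenthetical remark that it ``is easily derived from the Sternberg linearization theorem and Theorem 2 of Takens''. Your decomposition --- realize $\hat h$ by a smooth germ via Borel, then use Sternberg when $|\hat f'(0)|\neq 1$ and Takens when $\hat f'(0)=\pm 1$ --- is exactly the derivation the authors have in mind, and your Steps 1 and 2 are correct. (One bookkeeping slip in Step 2: the element of the formal centralizer of $m_\lambda$ is $\hat\phi_1\circ\hat\phi_2^{-1}$, not $\hat\phi_2^{-1}\circ\hat\phi_1$; the replacement of $\phi_1$ by $m_\mu^{-1}\circ\phi_1$ that you then perform is the one matching the corrected identity, so the argument goes through.)

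The genuine gap is the case $\hat f'(0)=-1$. Your claim that $\hat f\neq\mathrm{Id}$ together with $\hat f'(0)=-1$ forces $\hat f^{\circ 2}\neq\mathrm{Id}$ is false: any formal conjugate of $z\mapsto -z$ is a non-trivial formal involution. In that sub-case $\tilde f^{\circ 2}$ and $g^{\circ 2}$ are flat to the identity, so Takens' theorem applies neither to them nor to $\tilde f$ and $g$ directly (Takens' Theorem~2 concerns germs tangent to the identity), and in fact the statement itself fails there: take $f(z)=-z$ and $g(z)=-z+\rho(z)$ with $\rho$ flat at $0$ and $g^{\circ 2}\neq\mathrm{Id}$; then $\hat f=\hat g=-z\neq\mathrm{Id}$, so $\hat h=\mathrm{Id}$ is a formal conjugacy, yet no conjugacy of any kind can carry the involution $f$ to $g$. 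So a correct proof must add the hypothesis that $\hat f^{\circ 2}\neq\mathrm{Id}$ when $\hat f'(0)=-1$ (this costs nothing in the paper's application to Theorem~C, where $\hat f=\hat\rho(b_1)$ has infinite order because $\hat\rho$ is faithful and the surface group is torsion-free). Even with that hypothesis, your descent from a smooth conjugacy of the squares to one of $\tilde f$ and $g$ needs a real argument --- for instance via Kopell's lemma on centralizers of non-flat parabolic germs --- rather than the one-line appeal to ``uniqueness of the square root''.
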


\begin{proof}[Proof of Theorem~C]
Denote by $\hat{a}_i$, $\hat{b}_i$, $1\leq i\leq g$ the images of the standard 
generators of $\Gamma_g$ by the representation $\rho$; they satisfy the relation 
\begin{equation}
\hat{a}_1\circ \hat{b}_1\circ \hat{a}_1^{-1}=(\prod_{j=2}^g [\hat{a}_j,\hat{b}_j])\circ \hat{b}_1.
\end{equation}
By the  theorem of Borel and Peano, one can find germs of diffeomorphisms  $b_1$ and $a_j$, $b_j$, $j\geq 2$, whose respective
Taylor expansions coincide with $\hat{b}_1$,  $\hat{a}_j$, and $\hat{b}_j$ respectively. Then, Theorem~\ref{thm:Sternberg-Takens}
provides a germ of diffeomorphism $a_1$ such that $a_1\circ b_1\circ a_1^{-1}= (\prod_{j=2}^g [a_j,a_j])\circ b_1$. Thus, one 
gets a representation $\rho$ of $\Gamma_2$ into $\Diff^\infty(\R,0)$ with Taylor expansion equal
to $\hat{\rho}$. Since the initial representation $\hat{\rho}$ is injective, so is $\rho$. 
\end{proof}

\subsection{Conjugacy classes}
Two subgroups $\Gamma_1$ and $\Gamma_2$  of $\Diff(\C,0)$ are {\bf{topologically 
conjugate}} if there is a germ of homeomorphism $\varphi\colon (\C,0)\to (\C,0)$ 
such that $\varphi\circ \Gamma_1\circ \varphi^{-1}=\Gamma_2$, and are {\bf{formally 
conjugate}} if there is a formal diffeomorphism $\hat{\varphi}$ such that 
$\hat{\varphi}\circ \Gamma_1\circ \hat{\varphi}^{-1}=\Gamma_2$. 
A germ of homeomorphism $\varphi$ is {\bf{anti-holomorphic}} if its complex conjugate $z\mapsto \overline{\varphi(z)}$
is holomorphic.
\begin{thm}[Nakai, Cerveau-Moussu] 
Let $\Gamma_1$ and $\Gamma_2$ be two subgroups of $\Diff(\C,0)$ which are not solvable. 
\begin{enumerate}
\item If $\varphi$ is a local homeomorphism that conjugates $\Gamma_1$ to $\Gamma_2$, then 
$\varphi$ is holomorphic, or anti-holomorphic.
\item If $\hat{\varphi}$ is a formal conjugacy between $\Gamma_1$ and $\Gamma_2$, then $\hat{\varphi}$
converges and is therefore a holomorphic conjugacy.
\end{enumerate}
\end{thm}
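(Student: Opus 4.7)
The plan is to use Koenigs linearization (Theorem~\ref{thm:Koenigs}) to reduce both assertions to rigidity for germs of homeomorphisms that intertwine dilations of $\C$. The essential input I would treat as a black box is Nakai's density theorem: a non-solvable $\Gamma\subset\Diff(\C,0)$ contains hyperbolic elements (those $h$ with $|h'(0)|\neq 1$), the pseudogroup $\Gamma^*$ has orbits dense in a punctured neighborhood of $0$, and the multipliers at $0$ of elements of $\Gamma^*$ fixing $0$ contain a dense subgroup of $\C^*$.

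For assertion~(2), first pick a hyperbolic $h_1\in\Gamma_1$ with multiplier $\lambda=h_1'(0)$, and set $h_2=\hat\varphi h_1\hat\varphi^{-1}\in\Gamma_2$. Taking linear parts of $\hat\varphi\circ h_1=h_2\circ\hat\varphi$ gives $h_2'(0)=\lambda$. By Theorem~\ref{thm:Koenigs} there are convergent linearizers $H_i\in\Diff(\C,0)$ with $H_i h_i H_i^{-1}=m_\lambda$, where $m_\lambda(z)=\lambda z$. Setting $\hat\psi:=H_2\circ\hat\varphi\circ H_1^{-1}\in\widehat{\Diff}(\C,0)$, the conjugacy relation becomes $\hat\psi(\lambda z)=\lambda\hat\psi(z)$. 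Writing $\hat\psi(z)=\sum_{n\geq 1}a_n z^n$, this forces $a_n(\lambda^n-\lambda)=0$ for all $n$; since $|\lambda|\neq 1$ implies $\lambda^n\neq\lambda$ for $n\geq 2$, one gets $\hat\psi(z)=a_1 z$, which is convergent, so $\hat\varphi=H_2^{-1}\circ(a_1 z)\circ H_1$ is also convergent.

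For assertion~(1) the same reduction yields a germ of homeomorphism $\psi:=H_2\circ\varphi\circ H_1^{-1}$ satisfying $\psi(\lambda z)=\mu\psi(z)$ for some $\mu\in\C^*$. To upgrade this to (anti)holomorphicity, I would invoke Nakai's density to choose a second hyperbolic element $h_1'\in\Gamma_1^*$ fixing $0$ whose multiplier $\lambda'$ generates, together with $\lambda$, a dense subgroup of $\C^*$; its Koenigs linearizer is convergent, and after transporting everything into a common chart one obtains a second equivariance $\psi(\lambda' z)=\mu'\psi(z)$. Iterating yields $\psi(\lambda^n(\lambda')^m z_0)=\mu^n(\mu')^m\psi(z_0)$ for any base point $z_0\neq 0$; by density of $\langle\lambda,\lambda'\rangle$ in $\C^*$ and continuity of $\psi$, the assignment $\lambda^n(\lambda')^m\mapsto\mu^n(\mu')^m$ extends to a continuous homomorphism $\chi\colon\C^*\to\C^*$ with $\psi(z)=\chi(z/z_0)\psi(z_0)$ in a punctured neighborhood of $0$. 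Every continuous endomorphism of $\C^*$ has the form $z\mapsto z^a\bar z^b$ with $a-b\in\Z$; the requirement that $\psi$ be a local homeomorphism at $0$ forces $(a,b)\in\{(1,0),(0,1)\}$, so $\psi(z)=cz$ or $\psi(z)=c\bar z$, and $\varphi=H_2^{-1}\circ\psi\circ H_1$ is holomorphic or anti-holomorphic.

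The main obstacle is the coordinate transport in part~(1), combined with the depth of input required from Nakai's density theorem. The Koenigs linearizers of $h_1$ and $h_1'$ are generally distinct convergent germs, so combining the two equivariances into a single chart requires a simultaneous normalization that crucially uses the non-solvability of $\Gamma_1$: one wants to select $h_1'$ whose linearizer differs from that of $h_1$ by a germ tangent to the identity, which is precisely the kind of fine control that Nakai's density provides. Moreover, the classification of continuous endomorphisms of $\C^*$ and the exclusion of the exotic cases $z\mapsto z^a\bar z^b$ with $(a,b)\notin\{(1,0),(0,1)\}$ rely on $\psi$ extending across $0$ as a genuine local homeomorphism; ensuring that the density statement produces enough constraints for this, rather than merely one-parameter equivariances, is the delicate heart of the proof.
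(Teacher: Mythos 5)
The paper offers no proof of this statement: it is quoted as a known theorem of Nakai and of Cerveau--Moussu, so there is nothing internal to compare against, and your proposal must be judged on its own. It has a fatal gap at the very first step. You assume, as part of your ``black box'', that every non-solvable subgroup of $\Diff(\C,0)$ contains a hyperbolic element, and both parts of your argument are built on Koenigs linearization of such an element. This is false. If $\Gamma$ is non-solvable then so is its derived subgroup, and every element of the derived subgroup has multiplier $1$ (since $f\mapsto f'(0)$ is a homomorphism into the abelian group $\C^*$); so there exist non-solvable subgroups with no hyperbolic element at all. Indeed the paper itself exhibits one in Theorem~\ref{thm:thefreegroup}: a non-abelian free (hence non-solvable) subgroup of $\Diff(\C,0)$ all of whose elements are tangent to the identity. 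For such groups Koenigs' theorem never applies and your reduction collapses. Nakai's density theorem does not rescue this: it asserts density of pseudogroup orbits and the presence of local flows in the closure of the pseudogroup, and its proof is driven precisely by parabolic (tangent-to-identity) elements and their Leau--Fatou dynamics, not by hyperbolic ones. The actual proofs of both assertions must handle the all-parabolic case, which is the hard one: one extracts from iterated commutators a sequence converging to time-$t$ maps of a vector field, shows the closure of the pseudogroup contains a one-parameter local flow, and derives rigidity of topological and formal conjugacies from that flow.

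Even in the special situation where hyperbolic elements do exist, part (1) of your argument is incomplete for the reason you yourself flag: the Koenigs linearizers of $h_1$ and $h_1'$ live in different charts, so the two equivariances $\psi(\lambda z)=\mu\psi(z)$ and $\psi(\lambda' z)=\mu'\psi(z)$ do not hold for the same map $\psi$ in the same coordinate, and the passage to a single continuous character on $\C^*$ is exactly the step that is missing. The strategy cannot be repaired within the stated framework; it has to be replaced by the parabolic/vector-field argument of Nakai and Cerveau--Moussu.
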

Thus, (the images of) two embeddings of $\Gamma_g$ in $\Diff(\C,0)$ are topologically or formally conjugate if 
and only if they are analytically conjugate. 

\subsection{Two questions}

\subsubsection{} It would be interesting to exhibit an embedding $\alpha$ of the group $\Gamma_g$, $g\geq 2$, into the
group of analytic diffeomorphisms of the circle $\R/Z$ fixing the origin $o\in \R/\Z$. If such an embedding 
exists, the suspension of this representation $\alpha$ gives a compact manifold $M_\alpha$ of dimension $3$ that
fibers over $\Sigma_g$, together with a foliation ${\mathcal{F}}_\alpha$ of co-dimension $1$ which is transverse to the fibration 
$\pi\colon M_\alpha\to \Sigma_g$ and whose monodromy is given by $\tau$. The fixed point gives a compact
leaf of ${\mathcal{F}}_\alpha$ with holonomy given by the same representation $\tau$.

\medskip

{\bf{Question.-- }} Does there exist an embedding of $\Gamma_2$ into the group of analytic
diffeomorphisms of the circle fixing the origin ?

\medskip

This question was the original motivation of  Cerveau and Ghys when they asked for a proof 
of Theorem~A (see \cite{Cerveau:Survey2013}). 

\begin{rem}
According to Theorem~\ref{thm-g2-p-adic}, there is an embedding $\rho$ of $\Gamma_2$ in $\Diff(\Q_p,0)$
such that  $\rho(a)'(0)$ and $\rho({\overline{a}})'(0)$ have modulus $1$ while $ \rho(b)$ and $\rho({\overline{b}})$
are tangent to the identity. Conjugate $\rho$ by the homothety $a\mapsto p^N z$ for some positive integer $N$. 
If $N$ is large enough, the coefficients $a_n$, $n\geq 2$, of all elements of $\rho(\Gamma_2)$ have norm $<1$, 
and the ultrametric inequality shows that $\rho(\Gamma_2)$ preserves the open disks $\{z\in \C_p\; ; \: \vert z \vert < 1-\epsilon\}$
for every $\epsilon>0$. Thus, it preserves arbitrary thin annuli $\{z\in \C_p\; ; \; 1-\epsilon\vert z \vert \leq 1\}$. (Here $\C_p$
is the completion of the algebraic closure of $\Q_p$) 
\end{rem}

\subsubsection{} The derived subgroup of $\Diff(\C,0)$ is the kernel of the morphism 
$\jj_1\colon f\mapsto f'(0)$: 

\begin{thm}
Let $\bfk$ be a complete, non-discrete valued field.
An element $f$ of $\Diff(\bfk,0)$ is a commutator if and only if $f'(0)=1$.
All higher terms of the lower
central series coincide with the kernel of $\jj_1\colon \Diff(\bfk,0)\to \Jet_1(\bfk,0)$. 
\end{thm}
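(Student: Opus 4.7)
My plan is to apply Koenigs' linearization theorem (Theorem~\ref{thm:Koenigs}) to the homothety $m_\lambda(z)=\lambda z$ and to its translate $f\,m_\lambda$.

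The easy direction is essentially free: since $\jj_1\colon \Diff(\bfk,0)\to \Jet_1(\bfk,0)\simeq \bfk^*$ is a group homomorphism into an abelian group, its kernel contains the derived subgroup and hence every term of the lower central series of $G:=\Diff(\bfk,0)$. So any $f$ that is a commutator (or more generally lies in any term of the lower central series) satisfies $f'(0)=1$.

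For the main direction, given $f$ with $f'(0)=1$, I would pick $\lambda\in \bfk$ with $|\lambda|\neq 1$, which is available because $\bfk$ is non-discrete. Both $m_\lambda$ and $f\,m_\lambda$ are hyperbolic germs of derivative $\lambda$ at the origin, so Koenigs' theorem produces a unique $h\in \Diff(\bfk,0)$ tangent to the identity satisfying $h\,m_\lambda\, h^{-1}=f\,m_\lambda$. This rearranges immediately to $f=[h,m_\lambda]$, so $f$ is a single commutator.

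The same formula gives the lower central series statement. Setting $K=\ker \jj_1$, the element $h$ produced above is tangent to the identity, hence lies in $K$, so $f=[h,m_\lambda]\in [G,K]$. Thus $K\subseteq [G,K]$, and the reverse inclusion is automatic since $K$ is normal in $G$; whence $[G,K]=K$. A trivial induction on $n$ now gives that the $n$-th term of the lower central series of $G$ equals $K$ for every $n\geq 2$: the base case $n=2$ is the first part of the theorem, and the inductive step is precisely $[G,K]=K$. The only subtle point in the whole proof is the availability of $\lambda$ with $|\lambda|\neq 1$, which is exactly what the non-discreteness hypothesis on $\bfk$ provides; beyond that, the argument is a direct consequence of Koenigs' theorem and the uniqueness of the tangent-to-identity linearization.
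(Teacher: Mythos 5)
Your proof is correct and follows essentially the same route as the paper: apply Koenigs' linearization to the hyperbolic germ $f\circ m_\lambda$ (with $\vert\lambda\vert\neq 1$ supplied by non-discreteness) to write $f=[h,m_\lambda]$ with $h$ tangent to the identity, and observe that $h\in\Ker \jj_1$ forces every term of the lower central series beyond the first to equal $\Ker \jj_1$. No gaps.
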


\begin{proof} If $f$ is a commutator $[g,h]$ then $f'(0)=1$. If $f'(0)=1$, compose 
$f$ with the homothety $m_{\lambda}(z)=\lambda z$ for some $\lambda \in \bfk^*$ of
norm $\vert \lambda\vert \neq 1$, and apply Koenigs linearization theorem to find 
an element $h\in \Diff(\bfk,0)$ such that $m_\lambda \circ f = h\circ m_\lambda h^{-1}$ and $h'(0)=1$. 
Then $f=[h, m_\lambda]$. This proves that the derived subgroup of $\Diff(\bfk,0)$ is the kernel of $\jj_1$; 
since $h$ is in the kernel of $\jj_1$,  all subsequent terms of the lower central series
coincide with the derived subgroup. \end{proof}

Now, consider the upper central series. The first terms are $\Diff(\C,0)$ and its derived subgroup $\Diff(\bfk,0)^{(1)}$. 
Then comes 
\begin{equation}
\Diff(\bfk,0)^{(2)}:=[\Diff(\bfk,0)^{(1)},\Diff(\bfk,0)^{(1)}].
\end{equation} The group of jets of order $3$ which are tangent to 
identity, i.e. jets of the form $j(z)=z+a_2 z^2+a_3 z^3$ modulo $z^4$, is an abelian group; at the level of formal 
germs, it is known that the kernel of $\jj_3$ in $\widehat{\Diff}(\bfk,0)^{(1)}$ coincides with the
derived subgroup $\widehat{\Diff}(\bfk,0)^{(2)}$ (see~\cite{Camina:2000}, \S 3, for the description of the upper central series of 
$\widehat{\Diff}(\bfk,0)$). We don't know if a similar statement holds for germs of diffeomorphisms:

\medskip

{\bf{Question.-- }} Does the kernel of $\jj_3$ coincide with the second derived subgroup of $\Diff(\C,0)^{(2)}$ ? More generally, 
what is  the upper central series of $\Diff(\C,0)$ ? \\
 
\section{Appendix: Free groups} \label{par:app-free-groups}

The following theorem, and its proof, are strongly inspired by \cite{MRR}. The proof given in \cite{MRR} 
is somewhat difficult because it makes use of a topology on $\Diff(\C,0)$ which is not compatible with
the group law. We adapt the same proof, without reference to such a topology. 

\begin{thm}\label{thm:MRR}
Let $(\bfk, \vert \cdot\vert)$ be a complete, non-discrete valued field. 
Let $f$ and $g$ be elements of $\Diff(\bfk, 0)$ of infinite order. 
Let $w$ be a non-trivial element of the free group $\F_2$.
Then, there is a polynomial germ of diffeomorphism $h$ such that
$w(hfh^{-1},g)\neq \Id$. 

If $w=a^{n_\ell} b^{n_{\ell-1}}\cdots a^{n_2} b^{n_1}$, one can choose $h$ of the form $z+\epsilon z^2 P(z)$ 
with an arbitrarily small $\epsilon$ and a polynomial function $P\in \bfk[z]$ such that  
$\deg(P)\leq (2\ell)!$ and $\vert P(x)\vert \leq 1$ for all $x\in \disk_1$.
\end{thm}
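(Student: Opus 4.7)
Plan. The plan is to seek $h$ as a polynomial perturbation of the identity, $h_\eps(z) = z + \eps z^2 P(z)$ with $\eps \in \bfk^*$ small and $P\in\bfk[z]$, and to study the first-order $\eps$-expansion of $w(h_\eps f h_\eps^{-1}, g)$. If $w(f,g) \neq \Id$ already, then $h = \Id$ suffices, so assume $w(f,g) = \Id$. Setting $\phi(z) := z^2 P(z)$ and $f_\eps := h_\eps \circ f \circ h_\eps^{-1}$, a direct computation gives
\begin{equation*}
\frac{d}{d\eps}\bigg|_{\eps=0} f_\eps = \phi\circ f - f'\cdot\phi,
\end{equation*}
and an analogous formula for $f_\eps^{-1}$. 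Applying the chain rule to each occurrence of $f^{\pm 1}$ in the word $w$, the variation
\begin{equation*}
V_\phi(z) := \frac{d}{d\eps}\bigg|_{\eps=0} w(f_\eps, g)(z)
\end{equation*}
takes the form $V_\phi(z) = \sum_k c_k(z)\, \phi(\beta_k(z))$, where each $\beta_k$ is a prefix of $w$ evaluated as an element of the subgroup of $\widehat{\Diff}(\bfk,0)$ generated by $f$ and $g$, and each $c_k$ is an explicit product of derivatives of $f, g$ along the $\beta_k$-orbit of $z$.

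The heart of the proof is to exhibit $P$ of degree at most $(2\ell)!$ for which $V_\phi \not\equiv 0$. Supposing the contrary, substitution of $\phi(z) = z^2 P(z)$ and comparison of coefficients in $P$ produce the family of identities
\begin{equation*}
\sum_k c_k(z)\,\beta_k(z)^{j+2} \equiv 0\quad\text{in }\bfk[[z]],\qquad 0 \leq j \leq (2\ell)!.
\end{equation*}
Regrouping the $\beta_k$'s according to equality as formal power series and invoking a Vandermonde determinant in the distinct classes forces the aggregated coefficients to vanish. The infinite-order hypothesis on $f$ and $g$ is then used to unwind this cascade: any coincidence between two distinct prefixes of $w$ as elements of $\widehat{\Diff}(\bfk,0)$ yields a word relation involving $f$ and $g$, and after finitely many reductions one deduces the triviality of $w\in \F_2$, a contradiction. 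The factorial bound $(2\ell)!$ is a uniform estimate on the number of distinct classes that can appear.

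Once such $P$ is found, rescaling by a suitable element of $\bfk^*$ ensures $\sup_{\disk_1}|P|\leq 1$; and since $w(h_\eps f h_\eps^{-1}, g) = \Id + \eps V_\phi + O(\eps^2)$ with $V_\phi\not\equiv 0$, the germ $w(h_\eps f h_\eps^{-1}, g)$ is non-trivial for all $\eps\in\bfk^*$ of sufficiently small absolute value away from a discrete bad set. The main obstacle is the Vandermonde-and-unravelling step: making the combinatorial control quantitative, with factorial bound $(2\ell)!$, requires a careful analysis of when two distinct prefixes of $w$ can coincide in $\widehat{\Diff}(\bfk,0)$ under only the infinite-order hypothesis on $f$ and $g$. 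If this direct argument proves too fragile, an alternative is to phrase the existence of $P$ as a Zariski-density statement in the finite-dimensional $\bfk$-variety of polynomials of degree at most $(2\ell)!$, exploiting the $\bfk$-linearity of $P\mapsto V_\phi$ together with a genericity principle to bypass the explicit combinatorial count.
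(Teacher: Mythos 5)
Your linearization strategy is genuinely different from the paper's proof, which is a pointwise orbit argument: the paper tracks only the $\ell+1$ points $z_0,\ z_1=g^{n_1}(z_0),\ z_2=f_h^{n_2}(z_1),\dots$, one per syllable of $w$, and builds $h$ as a composition of polynomial perturbations $z+\eta z^2\prod_{j}(z-z_j)$ that freeze the points already placed and move the next one off a finite set of forbidden values (finite because $f^m$ and $g^m$ are never the identity). Your proposal has a genuine gap precisely at the step you flag as the heart of the matter. After the Vandermonde reduction you are left with the vanishing of the \emph{aggregated} coefficients $C_m=\sum_{k\in m}c_k$, one for each class of prefixes that coincide as elements of $\widehat{\Diff}(\bfk,0)$. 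Your plan for the contradiction --- ``any coincidence between two distinct prefixes yields a word relation involving $f$ and $g$, and after finitely many reductions one deduces the triviality of $w\in\F_2$'' --- rests on a false premise: the hypothesis is only that $f$ and $g$ have infinite order, so they may satisfy many relations (take $g=f$, or $g=f^{-1}$, or any commuting pair), prefixes really do coincide, and no amount of unwinding relations of the particular pair $(f,g)$ can establish triviality of $w$ as an element of the abstract free group. What is actually needed is a direct proof that the $C_m$ cannot all vanish; since the individual $c_k$ inside one coincidence class could cancel, this is not automatic, and it is the whole difficulty. The fallback you offer is circular: because $P\mapsto V_\phi$ is linear, the set of good $P$ is the complement of a linear subspace, and the ``genericity principle'' asserting that this subspace is proper is exactly the statement to be proved. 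Note also that a first-order argument can fail in principle even when the theorem is true ($\eps\mapsto w(f_\eps,g)$ could be stationary to first order at $\eps=0$ without being constant in $\eps$), so a complete proof along these lines must either rule that out or go to higher order.

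Independently of this, your argument cannot deliver the stated degree bound. The prefix points entering $V_\phi$ are the points of the orbit of $z$ under the successive letters of $w$ that are adjacent to an occurrence of $f^{\pm1}$; their number is governed by $\sum_i|n_i|$, not by the number of syllables $\ell$. Concretely, take $w=a^Nb^N$ (so $\ell=2$ and $(2\ell)!=24$) and $g=f^{-1}$ with $f$ of infinite order, so that $w(f,g)=\Id$: the relevant points are $f^{-N}(z),f^{-N+1}(z),\dots,z$, which are $N+1$ pairwise distinct formal power series, and your Vandermonde step then requires $\deg P\geq N$, which exceeds $24$ for large $N$. Your claim that $(2\ell)!$ bounds ``the number of distinct classes that can appear'' is therefore false. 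The paper obtains a bound depending only on $\ell$ precisely because its construction distinguishes a single point per syllable rather than one per letter.
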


Before proving this result, let us introduce some vocabulary and notation. Write $w$ 
as a reduced word in the generators $a$ and $b$ of the free group: 
\begin{equation}
w=a^{n_\ell} b^{n_{\ell-1}}\cdots a^{n_2} b^{n_1}
\end{equation}
where the $n_i$ are in $\Z\setminus\{ 0\}$, except maybe if $n_1$ or $n_\ell$ is zero,
but conjugating $w$ by a power of $a$, we only need to consider the case $n_1n_\ell\neq 0$.
Set $N=\max \vert n_i\vert$. 

Let $h$ be an element of $\Diff(\bfk,0)$, and set $f_h=h^{-1}\circ f \circ h$.
Let $r>0$ be smaller than the convergence radius of $h$, $f$,  $g$ and their inverses. 
Choose $R>0$  such that all these germs, and all their compositions
of length $\leq 3N\ell$ map $\disk_R$ inside $\disk_{r}$. If $z$ is a point in $\disk_R$, then its orbit under
the action of $f_h$ and $g$ stays in $\disk_r$ for all compositions of these germs 
given by words of length $\leq N\ell$ in $\F_2$, in this situation, we say that the orbit of $z$ is
{\bf{well defined}} up to length $N\ell$. In particular, if we look at the composition $w(f_h,g)$, and pick a point $z$ in $\disk_R$,
we get a sequence of points 
\begin{equation}
z_0=z, \; z_1=g^{n_1}(z_0), \; z_2=f_h^{n_2}(z_1), \; ..., \ z_\ell=w(f_h,g)(z_0).
\end{equation}
To prove the theorem, we construct a triple $(h,R,z)$ such that the orbit of $z$ is well defined and the
$z_i$ are pairwise distinct; in particular, $z_\ell\neq z_0$ and $w(f_h,g)\neq \Id$.

\begin{proof}
We do a recursion on the length $\ell$, proving the existence of a triple $(h,R,z)$ such that the $z_i$ are pairwise distinct for 
$0\leq i\leq \ell$. Since $f$ and $g$ have infinite order, the union of all fixed points of $f^m$ and $g^m$ in $\disk_r$ for
$-N\leq m\leq N$ is a finite set $F$. For $j=1$, we just pick a point $z_0$ sufficiently near the origin with $z_1:=g^{n_1}(z_0)\neq z_0$; the only constraint is
to take $z_0$ in the complement of $F$. The points $z_0$ and $z_1$ will be kept fixed in the recursion.

Assume that a polynomial germ of diffeomorphism $h_k$ has been constructed, in such a way that 
(a) the points $z_0$, $z_1$, $z_2$, ..., $z_{2k}$, and $z_{2k+1}$ are pairwise distinct (we just intialized the recursion for $k=0$), 
and (b) $h_k(z)=z+\epsilon_k R_k(z)$ for some small $\epsilon_k\in \bfk$ and some element $R_k\in \bfk[z]$ of degree $\leq (2k)!$ which is 
divisible by $z^2$. 
Consider a polynomial germ 
\begin{equation}
P_k(z)=z+\eta_k z^2\prod_{j=0}^{2k}(z-z_j) 
\end{equation}
with a small $\eta\in \bfk$; then 
\begin{itemize}
\item $P_k$ fixes $z_j$ for all $j\leq 2k$, 
\item $P_k(z_{2k+1})=a_k \eta_k +b_k$ for some pair $(a_k,b_k)\in \bfk^2$ with $a_k\neq 0$,
\item as $\eta_k$ goes to $0$, the radius of convergence of $P_k$ and its inverse $P_k^{-1}$ go to infinity.
\end{itemize}
If we compose $h_k$ with $P_k$
then $H=h_k\circ P_k$ is a new polynomial germ such that the orbit of $z_0$ under $f_{H}$ and $g$ gives the same
sequence $z_0$, $z_1$, $\ldots$, up to $z_{2k+1}$. 
The next point is
\begin{equation}
z_{2k+2}= f_{H}^{n_{2k+2}}(z_{2k+1})  
\end{equation}
and we want to exclude the possibility $z_{2k+2}\in \{z_0, \ldots, z_{2k+1}\}$; since
\begin{equation}
z_{2k+2}= (P_k^{-1} \circ f_{h_k}^{n_{2k+2}} \circ P_k)(z_{2k+1}) 
\end{equation}
we want to avoid the inclusion
\begin{equation}
f_{h_k}^{n_{2k+2}}(P_k(z_{2k+1})) \subset P_k \{z_0, \ldots, z_{2k+1}\},
\end{equation}
and for that we just need to choose the parameter $\eta_k$ in the definition of $P_k$ in such a 
way that $f_{h_k}^{n_{2k+2}}(P_k(z_{2k+1}))$ is not in $\{z_0, \ldots, z_{2k}\}$ and $P_k(z_{2k+1})$  is not a fixed point of $f_{h_k}^{n_{2k+2}}$.
These constraints are satisfied for all small non-zero values of $\eta_k$ because $f^{n_{2k+2}}_{h_k}$ is not the identity and the coefficient $a_k$
in $P_k(z_{2k+1})=a_k \eta_k +b_k$ is not zero. 

The next point is $z_{2k+3}=g^{n_{2k+3}}(z_{2k+2})$ and we want it to be disjoint from $\{z_0, \ldots, z_{2k+1},z_{2k+2}\}$.
For this, we do a second perturbation of the conjugacy. Let 
\begin{equation}
Q_k(z)=z+\beta_k z^2\prod_{j=0}^{2k+1}(z-z_j) 
\end{equation}
with a small $\beta_k\in \bfk$; then 
\begin{itemize}
\item $Q_k$ fixes $z_j$ for all $j\leq 2k+1$, 
\item $Q_k(z_{2k+2})=c_k \beta_k +d_k$ for some pair $(c_k,d_k)\in \bfk^2$ with $c_k\neq 0$,
\item as $\beta_k$ goes to $0$, the radius of convergence of $Q_k$ and its inverse $Q_k^{-1}$ go to infinity.
\end{itemize}
Now, we set $h_{k+1}=Q_k\circ H$. This does not change the sequence $z_i$ for $0\leq i\leq 2k+1$, but the
last point $z_{2k+2}$ is replaced by $c_k \beta_k +d_k$. Since $g^{n_{2k+3}}\neq \Id$ and $c_k\neq 0$
any non-zero, small enough value of $\beta_k$ assures that  $z_{2k+3}\notin \{z_0, \ldots, z_{2k+1},z_{2k+2}\}$.

To sum up, if we set $h_{k+1}=Q_k\circ P_k\circ h_k$ then the sequence $z_0$, $\ldots$, $z_{2k+3}$ is now 
made of pairwise distinct points. Moreover, when the parameters $\eta_k$ and $\beta_k$ go to zero, the
germ $Q_k\circ P_k$ and its inverse converge uniformly to the identity on the disk $\disk_{2R}$, so we can 
assume that the orbit of $z_0$ is well defined for all composition of $h_{k+1}$, $f$, $g$, and their inverses
of length $\leq 3N\ell$. The germ $Q_k\circ P_k$ is equal to $z+S_k(z)$ where $S_k$ is 
divisible by $z^2$ and $\deg(S_k)\leq (2k+1)\times (2k+2)$. Thus, 
\begin{equation}
h_{k+1}(z)=z+P_{k+1}(z)
\end{equation}
where $z^2$ divides $P_{k+1}$ and 
\begin{equation}
\deg(P_{k+1})\leq \deg(P_k)\times (2k+1)\times (2k+2)\leq (2k+2) !
\end{equation}
This proves the recursion and finishes the proof of the theorem.
\end{proof} 
 
\begin{thm}
Let $(\bfk, \vert \cdot\vert)$ be a complete, non-discrete valued field. 
If $f$ and $g$ are elements of $\Diff(\bfk;0)$ of infinite order, there exists an 
element $h$ of $\Diff(\bfk;0)$ such that $f_h:=h\circ f \circ h^{-1}$ and $g$ 
generate a free group of rank $2$. One can choose $h$ such that $h'(0)=1$.
\end{thm}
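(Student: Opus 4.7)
Plan: I would build $h$ inductively as an infinite composition of small polynomial perturbations of the identity, invoking Theorem \ref{thm:MRR} at each step. Enumerate the nontrivial elements of $\F_2$ as $w_1, w_2, \ldots$ (say by increasing length). After conjugating the pair $(f,g)$ by a common homothety, one may assume $f^{\pm 1}$ and $g^{\pm 1}$ converge on $\ol\disk_1$ and that all their short compositions map $\ol\disk_1$ into $\ol\disk_{1/2}$. Starting from $h_0 = \Id$, I would define $h_n = \tilde h_n \circ h_{n-1}$, where each $\tilde h_n$ is a polynomial perturbation of the identity of controlled size, chosen so that $w_k(h_n f h_n^{-1}, g) \neq \Id$ for all $k \leq n$.

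At step $n$, the germ $f_{n-1} := h_{n-1} f h_{n-1}^{-1}$ is still of infinite order, so Theorem \ref{thm:MRR} applied to $(f_{n-1}, g, w_n)$ yields a polynomial $\tilde h_n(z) = z + \epsilon_n z^2 P_n(z)$ with $|P_n(x)| \leq 1$ on $\disk_1$ and $\epsilon_n > 0$ arbitrarily small, satisfying $w_n(\tilde h_n f_{n-1} \tilde h_n^{-1}, g) \neq \Id$. Since $h_n f h_n^{-1} = \tilde h_n f_{n-1} \tilde h_n^{-1}$, this gives $w_n(h_n f h_n^{-1}, g) \neq \Id$. For each $k < n$, the map $h \mapsto w_k(hfh^{-1}, g)$ is continuous in the sup norm on a fixed subdisk and does not vanish at $h = h_{n-1}$ by induction, so there is a neighborhood of $h_{n-1}$ in which $w_k \neq \Id$ persists; I would shrink $\epsilon_n$ to place $h_n$ in the intersection of these finitely many neighborhoods and additionally enforce $\epsilon_n \leq 2^{-n}$.

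For convergence, the bound $|P_n(x)| \leq 1$ on $\disk_1$ gives $\norm{\tilde h_n - \Id}_{\ol\disk_\rho} \leq \epsilon_n \rho^2$ for any $\rho \leq 1$ (via the maximum modulus principle in the archimedean case and via direct coefficient estimates $|a_i| \leq 1$ in the non-archimedean case). Since $\sum_n \epsilon_n < \infty$, the sequence $(h_n)$ is uniformly Cauchy on any fixed $\ol\disk_\rho$ with $\rho < 1$, converging to an analytic germ $h \in \Diff(\bfk, 0)$ with $h'(0) = 1$. Continuity of the word maps and of the coefficient functionals then yields $w_k(hfh^{-1}, g) \neq \Id$ for every $k \geq 1$, so $hfh^{-1}$ and $g$ generate a free group of rank~$2$.

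The main obstacle is the tension between two requirements at each induction step: the polynomial $P_n$ produced by Theorem \ref{thm:MRR} may have very high degree (bounded only by $(2\ell_n)!$, where $\ell_n$ is the length of $w_n$), yet the total perturbation $h_n - h_{n-1}$ must remain small enough to preserve all previously achieved non-triviality conditions and to keep the composition sequence convergent to an element of $\Diff(\bfk, 0)$. The uniform bound $\norm{P_n}_{\disk_1} \leq 1$ supplied by Theorem \ref{thm:MRR}, combined with the freedom to choose $\epsilon_n$ arbitrarily small, is precisely the lever that makes both requirements simultaneously satisfiable.
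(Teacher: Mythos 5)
Your argument is correct in outline but takes a genuinely different route from the paper's. The paper proves this statement in one shot, with no limiting process: it picks $h_0(z)=z+\sum_{n\ge 2}c_nz^n$ where the $c_n$ lie in the unit disk and are algebraically independent over the field $L$ generated by the coefficients of $f$ and $g$ (possible since a complete non-discrete field is uncountable). Each coefficient $A_N\bigl(w(h_0fh_0^{-1},g)\bigr)$ is a polynomial function of the $c_n$ with coefficients in $L$; if it equalled $A_N(\Id)$, algebraic independence would force the same identity for \emph{every} formal $h$, contradicting Theorem~\ref{thm:MRR}. Thus a single ``generic'' $h_0$ works for all words simultaneously. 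Note that the paper only needs the bare existence statement of Theorem~\ref{thm:MRR}, whereas your scheme crucially uses its quantitative refinement (arbitrarily small $\epsilon$, $|P|\le 1$ on $\disk_1$); in exchange, your construction produces an $h$ arbitrarily close to the identity and avoids transcendence-degree considerations, at the cost of all the analysis needed to control an infinite composition.

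The step you must repair is the passage to the limit. Knowing $w_k(h_nfh_n^{-1},g)\neq\Id$ for every $n\ge k$ does \emph{not} give $w_k(hfh^{-1},g)\neq\Id$ for $h=\lim h_n$: the condition is open, not closed, and your neighborhoods are re-chosen at each stage around the current $h_{n-1}$, so nothing prevents the limit from landing on the boundary where $w_k$ degenerates to the identity. The fix is standard but must be said: at stage $k$, record a quantitative witness --- a point $z_k^*$ and a margin $\delta_k>0$ with $\bigl|w_k(h_kfh_k^{-1},g)(z_k^*)-z_k^*\bigr|\ge\delta_k$ --- and then constrain each later $\epsilon_n$ so that the induced change in this quantity is at most $\delta_k\,2^{-(n-k)-1}$; a margin $\ge\delta_k/2$ then survives in the limit. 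Two secondary points. First, your parenthetical ``direct coefficient estimates $|a_i|\le 1$'' in the non-archimedean case is false: over $\Q_p$ the polynomial $(z^p-z)/p$ satisfies $|P(x)|\le 1$ on $\disk_1$ while having a coefficient of absolute value $p$. The sup-norm bound $\norm{\tilde h_n-\Id}_{\ol\disk_\rho}\le|\epsilon_n|\rho^2$ needs no coefficient estimate at all, but to conclude that the uniform limit $h$ is a \emph{convergent power series} in the non-archimedean case you do need Gauss-norm (coefficient) control, which you should extract either from the explicit construction of $P_n$ in the proof of Theorem~\ref{thm:MRR} or by shrinking $\epsilon_n$ relative to the Gauss norm of the specific $P_n$ obtained. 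Second, for $\bfk=\R$ you should complexify before asserting that a uniform limit of analytic germs is analytic.
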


Note that Theorem~\ref{thm_F2} is a direct corollary of that result; one just need
to start with $f=\lambda_1z$ or $\lambda_1z+z^2$ if $\lambda_1$ is a root of unity,
and similarly for $g$.

\begin{proof}
Denote by $a_n$ and $b_n$ the coefficients of $f$ and $g$ respectively. 
Let $L\subset \bfk$ be the field generated by the $a_n$ and $b_n$.
Since $\bfk$ is infinite and complete, its transcendental degree over $L$ is infinite: it contains an infinite sequence $(c_i)$ of 
algebraically independent numbers (over the base field of $\bfk$, see~\cite{}). We can moreover assume that all $c_i$
are in the unit disk. Set 
$h_0(z)=\sum_{n\geq 1} c_nz^n$. 

Consider a non-trivial element $w$ of $\F_2$. The $N$-th coefficient $A_N(w(h_0\circ f \circ h_0^{-1}, g))$ is 
a polynomial function in the coefficients of $f$, $g$, and $h$ (see Section~\ref{par:formal-diff-inversion}). If
it vanishes (resp. if it is equal to $1$), then  $A_N(w(h\circ f \circ h^{-1}, g))=0$ (resp. $1$) for all formal diffeomorphisms $h$, because the $c_i$ are
algebraically independent over $\bfk$. Thus, Theorem~\ref{thm:MRR} implies that $w(h_0\circ f \circ h_0^{-1}, g)\neq \Id$, and this
shows that $f_{h_0}:=h_0\circ f \circ h_0^{-1}$ and $g$ generate a free group of rank $2$.

In this argument, we could start with $h_0=z+\sum_{n\geq 2} c_n z^n$, because we can choose the germ $h$ in Theorem~\ref{thm:MRR}
with the additional constraint $h'(0)=1$. 
\end{proof}

 

\bibliographystyle{plain}
\bibliography{references-sgindiff}

\end{document}